\newcommand{\set}[1]{\left\{#1\right\}}
\newcommand{\abs}[1]{\left\vert#1\right\vert}
\newcommand{\norm}[1]{\left\Vert#1\right\Vert}
\def\R{\mathbb{R}}
\def\R{\mathbb{R}}
\def\NN{\mathbb{N}}
\def\Q{\mathbb{Q}}
\def\d{\,\mathrm{d}}
\def\dx{\,\mathrm{d}x}
\def\dt{\,\mathrm{d}t}
\newcommand{\en}{\mathcal{H}}
\DeclareMathOperator{\divergence}{div}
\newcommand{\dv}[1]{\divergence \left(#1\right)}
\newcommand{\OmT}{\Omega\times (0,T)}
\let\pa\partial
\let\na\nabla
\newcommand{\cD}{\mathcal{D}}
\newcommand{\rrhoA}{\sqrt{\rho_A}}
\newcommand{\rrhoB}{\sqrt{\rho_B}}
\newcommand{\rrhoAB}{\sqrt{\rho_A\rho_B}}
\newtheorem{thm}{Theorem}[section]
\newtheorem{lem}[thm]{Lemma}
\newtheorem{prp}[thm]{Proposition}
\newtheorem{claim}[thm]{Claim}
\theoremstyle{definition}
\newtheorem{dfn}[thm]{Definition}
\theoremstyle{remark}
\newtheorem{remark}[thm]{Remark}
\author{
Alethea B. T. Barbaro\footnote{Department of Mathematics, Applied Mathematics \& Statistics, Case Western Reserve University, 10900 Euclid Avenue, Yost Hall, Cleveland, Ohio 44106-7058, USA. abb71@case.edu} 
\and  Nancy Rodriguez \footnote{Engineering Center, ECOT 225, 526 UCB, Boulder, CO 80309-0526 nrod@unc.edu} 
%\and Maria Schonbek \footnote{Mathematics Department, 4111 McHenry, Santa Cruz, CA 95064, USA. schonbek@ucsc.edu} 
\and Havva Yolda\c{s} \footnote{Camille Jordan Institute, Claude Bernard University of Lyon 1, Batiment Jean Braconnier, 21 Avenue Claude Bernard, 69622 Villeurbanne Cedex France. yoldas@math.univ-lyon1.fr} 
\and Nicola Zamponi \footnote{
University of Mannheim, School of Business Informatics and Mathematics, B6, 28, 68159 Mannheim, Germany. nzamponi@mail.uni-mannheim.de}}
\title{Analysis of a cross-diffusion model for rival gangs interaction in a city}
\begin{document}

\maketitle

\begin{abstract}
  % ***** To be written
\noindent 
We study a two-species cross-diffusion model that is inspired by a system of convection-diffusion equations derived from an agent-based model on a two-dimensional discrete lattice. The latter model has been proposed to simulate gang territorial development through the use of graffiti markings. We find two energy functionals for the system that allow us to prove a weak-stability result and identify equilibrium solutions. We show that under the natural definition of weak solutions, obtained from the weak-stability result, the system does not allow segregated solutions. 
Moreover, we present a result on the long-term behavior of solutions in the case when the product of the masses of the densities are smaller than a critical value. This result is complemented with numerical experiments. 
\end{abstract}

\tableofcontents

\section{Introduction} \label{S:intro}
This article is devoted to the study of a two-population model with cross-diffusion:
\begin{equation} \label{E:system}
\begin{cases}
\pa_t \rho_A(t,x,y) = \frac{1}{4} \nabla \cdot \left( \nabla \rho_A(t,x,y) + 2 \beta c \rho_A(t,x,y) \nabla \rho_B(t,x,y) \right), \qquad x, y \in \Omega, \, t>0,\\
\pa_t \rho_B(t,x,y) = \frac{1}{4} \nabla \cdot \left( \nabla \rho_B(t,x,y) + 2 \beta c \rho_B(t,x,y) \nabla \rho_A(t,x,y) \right),  \qquad  x, y\in \Omega, \, t>0,
\end{cases}
\end{equation}
complemented with the initial data 
\begin{align} \label{E:1_IC}
\rho_A(0,x,y) = \rho_A^{in}(x,y) \text{ and } \rho_B(0,x,y) = \rho_B^{in}(x,y), \qquad  x, y \in \Omega,
\end{align}
and the homogeneous Neumann boundary conditions
\begin{align} \label{E:1_BC}
\pa_\nu \rho_A(t,x,y) =\pa_\nu \rho_B (t,x,y)=0 \qquad x, y \in \pa\Omega, \, t >0.
\end{align}
In system \eqref{E:system}-\eqref{E:1_BC}, $\beta$ and $c$ are positive parameters and $\Omega\subset \R^2$ a bounded domain. 
Such a system can arise, for example, by considering the following two-species segregation model involving two densities of agents, $\rho_A$ and $\rho_B$, along with respective marking densities $g_A$ and $g_B,$ introduced in \cite{AB18}:
\begin{align}\label{E:4system} 
\begin{cases}
\pa_t g_A(t,x,y) = c\rho_A(t,x,y) - g_A(t,x,y), & \qquad x,y \in \Omega, \, t>0,\\
\pa_t g_B(t,x,y) = c\rho_B(t,x,y) - g_B(t,x,y), & \qquad x,y \in \Omega, \, t>0,\\
\pa_t \rho_A(t,x,y) = \frac{1}{4} \nabla \cdot \left( \nabla \rho_A(t,x,y) + 2 \beta \rho_A(t,x,y) \nabla g_B(t,x,y) \right), & \qquad x,y \in \Omega, \, t>0,\\
\pa_t \rho_B(t,x,y) = \frac{1}{4} \nabla \cdot \left( \nabla \rho_B(t,x,y) + 2 \beta  \rho_B(t,x,y) \nabla g_A(t,x,y) \right), &\qquad x,y \in \Omega, \, t>0,
\end{cases}
\end{align} with homogeneous Neumann boundary conditions.
System \eqref{E:4system} models the dynamics of two competing groups that mark their territory, {\it e.g.} with graffiti, and whose movement strategies
is a combination of passive diffusion and directed movement towards the gradients of the marking densities of the competing groups.
To arrive at the reduced system \eqref{E:system}-\eqref{E:1_BC} from system \eqref{E:4system}, we assume that the marking densities equilibrate much more rapidly than the population densities. Hence we assume
\[ \pa_t g_A(t,x,y) = \pa_t g_B(t,x,y) = 0.\]
However, we remark that system \eqref{E:system}-\eqref{E:1_BC} can also be seen as a more general model of cross diffusion system where the inter-specific interactions can lead to segregation.  

The notion of cross-diffusion was initially motivated by Morisita's theory of environmental density \cite{M52, M71}, which brings to the 
forefront the influence that a population pressure has on the dispersal of a population due to the interface between individuals. In \cite{SKT79}, Shigesada, Kawasaki, and Teramoto introduced a behavioral model for the movement of individuals based on Morisita's observations.  
According to Morisita's theory, the movement of individuals is influenced by the following three factors: (i) random movement; 
(ii) population pressure due to mutual interference between individuals; and 
(iii) movement toward favorable places.  The population pressure due to the competing population leads to the cross-diffusion.  
To see this from a mathematical point of view, we consider our two populations, $\rho_A$ and $\rho_B$. 
Under the assumption of Fickian diffusion, we obtain a system of two equations: 
\begin{equation*} %\label{E:system2}
\begin{cases}
\partial_t \rho_A = \nabla J_A(\rho_A,\rho_B),\\
\partial_t \rho_B = \nabla J_B(\rho_A,\rho_B), 
\end{cases}
\end{equation*}
where $J_A, J_B$ are the flows of the populations $A$ and $B$, respectively.  The flow proposed by Shigesada, Kawasaki, and Teramoto has the form: 
\[ J_i = \nabla \left(\left(a_{iA} \rho_A + a_{iB} \rho_B + c_i\right) \rho_i\right), \text{ for } i \in \{A,B\}\]
%\[J_A = \nabla\cdot ((c_A + a_{AA}\rho_A + a_{AB}\rho_B)\rho_A)\quad\text{and}\quad J_B = \nabla\cdot ((c_B + a_{BA}\rho_A+ a_{BB}\rho_B )\rho_B)
%\]
with $a_{iA}, \, a_{iB}, \, c_i \geq 0.$

Soon after, Busenberg and Travis introduced some epidemic models with migration that also include cross-diffusion in \cite{BT83}. 
In their model, the authors assume that the population flow $J_i$, $i\in \set{A,B},$ are proportional to the gradient of a potential function, $\Psi,$ that only depends on 
the total population $P=\rho_A+\rho_B.$ The proportion is assumed to be the portion of the subpopulation $i$, which leads to the following form of the flow:
\[J_i = a\frac{\rho_i}{P}\nabla \Psi(P), \text{ for } i\in \set{A,B}.\]  
In \cite{GP84}, Gurtin and Pipkin introduced the potential function $\Psi(s) = s^2/2,$ which yields that: 
\begin{align}\label{BT}
J_i = a \rho_i\nabla(\rho_A+\rho_B), \text{ for }i\in \set{A,B}. 
\end{align}
In \cite{GS14}, Galiano and Selgas considered a more general version of the Gurtin and Pipkin model, where potential function depends on a general linear combination of the population densities, with the addition of random movement
and environmental effects.  
The most general version of the system they consider is:
$$
J_i(\rho_A, \rho_B) = \rho_i\nabla(a_{iA} \rho_A + a_{iB}\rho_B+b_iV)+c_i\nabla \rho_i,
$$
where $V$ is the environmental potential.  In \cite{GS14}, the existence of weak solutions for non-negative initial data in $L^\infty$ was proved in two parameter cases.  The first was under the condition that 
$$
4a_{AA}a_{BB}-(a_{AB}+a_{BA})^2>a_0,
$$
for some $a_0>0$.  This condition implies an ellipticity condition on the matrix $(a_{ij})_{i,j\in \set{A,B}}$ and can be relaxed. The authors were also be able to prove the existence of solutions in the case when
$a=a_{ij}$ for all $i,j\in \set{A,B}$, with $a>0$.

System \eqref{E:system}-\eqref{E:1_BC}, which we consider here, is a special case of this general model, where only passive diffusion and cross-diffusion are considered. Thus, the populations does not take into account the population pressure due to their own group. In particular, we assume that $a_{AA}, a_{BB}, b_A, b_B$ are all equal to zero. Thus, this case falls outside of the two cases considered in \cite{GS14}.  
It is worth noting that systems with local self- and cross-diffusion have found many applications, for example in, illicit trade of drugs \cite{E97}; 
epidemic models with diffusion of polymorphic populations \cite{BT83}; \vspace{3pt}
models for overcrowding effect with nonuniform ease of dispersal for different individuals \cite{GP84};
opinion dynamics \cite{SWS00}; and biochemical reactions \cite{VE09}.  

Many analytical results for cross-diffusion systems are available in the literature. For example, in \cite{CDJ18} Chen et al.~consider a reaction-cross-diffusion model for an arbitrary number of competing populations which, in the case of linear transition rates, extends the two-species SKT model presented in \cite{SKT79}. Existence of  global-in-time weak solutions to the model in a bounded domain with homogeneous Neumann boundary conditions is shown via an entropy method and an approximation scheme. Crucial conditions on the diffusion matrix are either weak cross-diffusion or detailed balance. Another cross-diffusion system where the diffusion matrix depends linearly on the densities is the two-species ion transport model through narrow membrane channels studied by Burger et al.~in \cite{BFPS10}. There the authors prove global existence of weak solutions to the equations in a bounded domain with no-flux boundary conditions via an entropy method, as well as global existence of strong solutions near the equilibrium. The result is generalized in \cite{GJ18} to the case of an arbitrary number of species with different specific electrical charges and mixed Dirichlet-Neumann boundary condition. The systems considered in \cite{BFPS10} and \cite{GJ18} also present a degeneracy in the entropy structure, that is, some gradient estimates are lost when the densities approach some critical region. Other degenerate cross-diffusion models have been recently studied in literature; for example, a large class of population models with degenerate cross-diffusion was analyzed in \cite{JZ17}, while the combination of degenerate cross-diffusion and nonlocal interaction is a feature of both models considered in \cite{BBP17} and \cite{FEF18}. The problem of degenerate cross-diffusion in a moving domain was considered in \cite{BE15}, while the interplay between singularity and degeneracy was a major feature of the model studied in 
\cite{DMZ19}. Reaction-cross-diffusion systems with Laplacian structure have been considered by Desvillettes et al.~in \cite{DLM14, DLMT15, DT15}.

Cross-diffusion equations can be seen as a large class of nonlinear, strongly coupled evolution PDEs with the structure: 
\begin{align}\label{CD}
\pa_t \rho = \dv{A(\rho)\nabla \rho} \equiv \sum_{i=1}^{n}\pa_{x_i}( A(\rho)\pa_{x_i}\rho ),\quad \; x\in \R^n, \; t>0.
\end{align}
The unkown of the system, $\rho = \rho(t,x)\in\R^n$, usually represents a vector of densities or concentrations. Therefore, it should be nonnegative to be consistent with the physics; sometimes it is also required to be (uniformly) bounded for the same reason. %``
Quite often, in cross-diffusion systems coming from the applied sciences the matrix $A(\rho)\in \R^{n\times n}$, the so-called \emph{diffusion matrix}, is neither symmetric nor positive semidefinite, which means that the standard coercivity-based approaches to the analysis of \eqref{CD} are ineffective. Moreover maximum/minimun principles are also usually unavailable due to the fact that $A$ is full and lacks a suitable structure. For these reasons, the analytical study of cross-diffusion equations is in general quite challenging.

A useful method, the so-called \emph{boundedness-by-entropy method}, in the analysis of (reaction-)cross-diffusion systems has been developed by J\"ungel and collaborators (see e.g.~\cite{J15} for a comprehensive review) after an idea found in Burger et al.~\cite{BFPS10}. This method is suitable for systems of evolution PDEs presenting a formal gradient flow structure, or an \emph{entropy structure}.  Specifically, it works for systems which can be written in the following form:
\begin{align}\label{gr.flow.0}
\pa_t\rho = \dv{\mathbb{M}\nabla\frac{\delta\en[\rho]}{\delta\rho}},\quad t>0,
\end{align}
where $\mathbb M$ is a positive semidefinite (often also symmetric) matrix and
$\frac{\delta\en[\rho]}{\delta\rho}$ is the Frech\'et derivative of the convex functional $\en$,
which is called the \emph{mathematical entropy} of the system. 
In many cases $\en[\rho]$ has the form 
\[\en[\rho] = \int_\Omega h(\rho) \mathrm{d} x,\] where $h$ a scalar convex function called \emph{entropy density}, then the object
$\frac{\delta\en[\rho]}{\delta\rho}$ can be identified, via Riesz representation theorem, with the gradient of $h$: 

\[\frac{\delta\en[\rho]}{\delta\rho}\simeq Dh(\rho), \]
which is referred to as \emph{entropy variable}. A first consequence of this formulation is that the functional $\en$ is a Lyapounov functional for \eqref{gr.flow.0}, that is;
it is nonincreasing in time along the solutions of \eqref{gr.flow.0}:
\begin{align*}
\frac{\d}{\d t}\en[\rho(t)] = - \int_\Omega \nabla Dh(\rho)\cdot \mathbb{M}\nabla Dh(\rho) \mathrm{d}x \leq 0,\quad t>0,
\end{align*}
since $\mathbb{M}$ is positive semidefinite by assumption\footnote{Here we assumed homogeneous Neumann boundary conditions and no reaction terms, that is the right-hand side of \eqref{CD} is zero. If any of these conditions are not verified, the quantity $\frac{\d}{\d t}\en[\rho(t)]$ might be positive. However, suitable compatibility conditions usually ensure that the entropy $\en[\rho(t)]$ remains at every time upper bounded via a Gronwall argument.}. Furthermore, if $Dh : \mathcal{D}\to\R^n$ is a globally invertible mapping, than the physical variable $\rho$ can be written in terms of the entropy variable $w = Dh(\rho)$ via $\rho = (Dh)^{-1}(w)$.
As a consequence $\rho\in\mathcal{D}$ whenever $w\in\R^n$. So, if \eqref{gr.flow.0} can be written and solved in terms of $w$, then the constraint $\rho(x,t)\in\mathcal{D}$ will hold whenever $w(x,t)$ is finite (that is, for a.e.~$x,t$, provided that $w$ is integrable). In particular, if $\mathcal{D}\subset\R^n$ is bounded, then $\rho\in L^\infty$ with bounds that only depend on $\mathcal{D}$; similarly, if $\mathcal{D}\subset\R^n_+$, then $\rho$ has nonnegative components. 
These ideas can be exploited to formulate an existence argument which proceeds roughly in three steps: (i) writing an approximate scheme which yields a sequence of approximate solutions to \eqref{gr.flow.0}; (ii) deriving an entropy balance inequality which yields gradient estimates for the approximate solution; (iii) showing via suitable compactness result that the approximate sequence has a converging subsequence and taking the limit in the approximate system to recover a weak solution to \eqref{gr.flow.0}.

Unfortunately, this program cannot be straightforwardly carried out when studying \eqref{E:system}-\eqref{E:1_BC} because of its extremely degenerate structure. Indeed, a standard entropy (formal gradient flow) structure requires the existence of a {\em convex} entropy functional, which cannot be the case for \eqref{E:system}-\eqref{E:1_BC}. Precisely, a necessary condition for a cross-diffusion system \eqref{CD} to admit a convex entropy is the normal ellipticity of the differential operator $\rho\mapsto \dv{A(\rho)\nabla\rho}$, that is,
the property that the real part of every eigenvalue of $A(\rho)$ is nonnegative \cite[Lemma 3.2]{J17}. This property is not verified by \eqref{E:system}-\eqref{E:1_BC}; as a matter of fact, the diffusion matrix $A(\rho)$ in \eqref{E:system}-\eqref{E:1_BC} has one positive and one negative eigenvalue in the region $\{(\rho_A,\rho_B)\in\R^2_+ \, \mid \, \rho_A\rho_B>1\}$. Consistently with this fact, the only Lyapounov functional that is known for \eqref{E:system}-\eqref{E:1_BC} is nonconvex. Furthermore, the property that the mapping $Dh : \R^2_+\to\R^2$ being invertible also fails for \eqref{E:system}-\eqref{E:1_BC}, since such a mapping is not even one-to-one. 

%A search for a workaround to counter these difficulties and obtain nonetheless some global-in-time existence result for \eqref{E:system}-\eqref{E:1_BC} has been unsuccessful, and only a local-in-time existence result is available for \eqref{E:system}-\eqref{E:1_BC}, which comes from Amann's theory \cite[Thr.~3.1]{J17} and holds under the assumption that the initial datum is $W^{1,p}$ with $p>d=2$ and takes values in the region $\{(\rho_A,\rho_B)\in\R^2_+ \, \mid \, \rho_A\rho_B<1\}$.
%However, we are also able to provide a weak-stability result, which is a key step in the proof of the global well-posedness and provides evidence that the system is not likely to be ill-posed.  
%Unfortunately, finding an approximation to \eqref{E:system}-\eqref{E:1_BC}
%for which we can prove existence and then apply the weak stability result has been a challenge and remains an open problem.  

A search for a workaround to counter these formidable difficulties and obtain nonetheless some global-in-time existence result for \eqref{E:system}-\eqref{E:1_BC} has been unsuccessful, and only a local-in-time existence result is available for \eqref{E:system}-\eqref{E:1_BC}, which comes from Amann's theory \cite[Thr.~3.1]{J17} and holds under the assumption that the initial datum is $W^{1,p}$ with $p>d=2$ and takes values in the region $\{(\rho_A,\rho_B)\in\R^2_+ \, \mid \, \rho_A\rho_B<1\}$.
However, we are also able to provide a weak-stability result, which holds (in spite of the very degenerate structure of the system) for generic weak solutions taking values in $\R^2_+$. Such result is a key step in the proof of the global well-posedness and provides evidence that the system is not likely to be ill-posed. 
We believe this to be a remarkable result, given the strongly degenerate structure of the system,
the critical loss of normal ellipticity properties for the right-hand side of \eqref{E:system} in the region $\{(\rho_A,\rho_B)\in\R^2_+\, \mid \, \rho_A\rho_B>1\}$,
the lack of a globally convex energy functional. To the best of our knowledge, this is the first result of this kind dealing with such a strongly degenerate system.
Unfortunately, finding an approximation to \eqref{E:system}-\eqref{E:1_BC}
for which we can prove existence and then apply the weak stability result has been a challenge and remains an open problem. 

% OLD VERSION
%\havva{The key tools are two energy-functional that help us obtain complementary estimates on the solutions.  A Maxwell-Boltzmann entropy functional holds under some constraints on the solutions, mainly that $\rho_A\rho_B<1$.   See Section \ref{S:existence} for the existence analysis. Besides the well-posensess of system \eqref{E:system} we are interested in understanding the stationary states of \eqref{E:system}, as they outline the possible long-term behavior of the evolution problem; see Section \ref{S:ss}.  An important question to consider is whether segregated steady states, which are physical in many situations, arise.  Based on the natural definition of weak solutions, obtained from the weak stability result, we show that all steady states must be constant.  This implies that perhaps the use of the entropy structure is not suitable to study segregated solutions.  An alternative is that the model actually does not capture the physical property of segregation. Our final result is on the long-term behavior of solutions to \eqref{E:system}, in the case when the product of the population densities $\rho_A,\rho_B$ are small; see Section \ref{S:long_time}.  We complement these results with some numerical simulations in Section \ref{S:NR}.}  

The paper is organized as follows: In Section \ref{S:energy}, we give two energy functionals that the system \eqref{E:system}-\eqref{E:1_BC} attains. These energy functionals are the key tools that help us obtain complementary estimates on the solutions. We also present these complementary a-priori estimates in Section \ref{S:energy}. A Maxwell-Boltzmann entropy functional holds under some constraints on the solutions, mainly that $\rho_A\rho_B<1$. Moreover, in Section \ref{S:ss}, we are interested in understanding the stationary states of \eqref{E:system}-\eqref{E:1_BC},  as they outline the possible long-term behavior of the evolution problem. Section \ref{S:existence} is dedicated to the existence analysis. An important question to consider is whether segregated steady states, which are physical in many situations, arise.  
Based on the natural definition of weak solutions, obtained from the weak stability result, we show that all steady states must be constant.  This implies that perhaps the use of the entropy structure is not suitable to study segregated solutions.  An alternative is that the model actually does not capture the physical property of segregation. 	Our final result is on the long-term behavior of solutions to \eqref{E:system}-\eqref{E:1_BC}, in the case when the product of the population densities $\rho_A,\rho_B$ are small; see Section \ref{S:long_time}. We complement these results with some numerical simulations in Section \ref{S:NR}.

\section{Energy functionals and a-priori estimates} \label{S:energy}
For the sake of simplicity we assume in this section that $2\beta c = 1$ and neglect the prefactor $1/4$ in front of the divergence in \eqref{E:system}. Note that this does not influence the existence analysis as it follows from a simple rescaling of the system.
In the following we denote $\rho = (\rho_A, \rho_B)$, and $\Omega\subset\R^2$ is an open, bounded set with Lipschitz boundary.

\subsection{Two energy functionals}
In this section we present two energy functionals that will be useful in obtaining bounds for $\rho_A$ and $\rho_B$.  
Let us define the first energy functional $\mathcal{H}[\rho]$:
\begin{align}
\mathcal{H}[\rho] = \int_\Omega h(\rho_A(x),\rho_B(x)) \mathrm{d} x,
\label{E:energy}
\end{align} where 
\[ h(\rho_A,\rho_B)=\rho_A \log \rho_A - \rho_A +
\rho_B\log \rho_B - \rho_B + \rho_A\rho_B .\]

\noindent System \eqref{E:system}-\eqref{E:1_BC} is a formal gradient flow with respect to $\mathcal{H}$:
\begin{align}\label{gr.flow}
\pa_t\rho = \dv{\mathbb{M}\nabla\frac{\delta\en[\rho]}{\delta\rho}},\quad
\text{with  }\mathbb{M} = \begin{pmatrix}
\rho_A & 0\\ 0 & \rho_B	
\end{pmatrix},
\end{align}
where $\frac{\delta\en[\rho]}{\delta\rho}$ is the Frech\'et derivative of $\en$, which can be identified, via Riesz representation theorem, with the gradient of $h$: 
$$
\frac{\delta\en[\rho]}{\delta\rho}\simeq Dh(\rho_A,\rho_B) = 
(\log\rho_A + \rho_B, \log\rho_B + \rho_A)^\top .
$$
The matrix $\mathbb M$ is positive semidefinite in $\R^2_+ \equiv [0,\infty)^2$.
Testing \eqref{gr.flow} against $Dh(\rho_A,\rho_B)$
yields the energy balance equation:
\begin{equation}
\frac{\mathrm{d}\mathcal{H}[\rho]}{\mathrm{d} t} + \int_\Omega \left \{ \rho_A \abs{ \nabla \left(\log \rho_A + \rho_B \right)}^2 + \rho_B \abs{\nabla \left(\log \rho_B + \rho_A \right)}^2 \right \}  \mathrm{d}x = 0.\label{ei.A}
\end{equation}
On the other hand, system \eqref{E:system}-\eqref{E:1_BC} also admits another gradient-flow structure under some restrictions on its solution.
Let us define the open set:
$$
\cD = \big \{ (\rho_A,\rho_B)\in \R^2_+ ~ \mid ~ \rho_A\rho_B<1  \big \} ,
$$
and the Maxwell-Boltzmann entropy functional:
\[\mathcal H_{MB}[\rho] = \int_\Omega h_{MB}(\rho_A(x),\rho_B(x)) \mathrm{d}x, \]
where \[ h_{MB}(\rho_A,\rho_B) = \rho_A\log\rho_A - \rho_A 
+ \rho_B\log\rho_B - \rho_B. \]
Then \eqref{E:system}-\eqref{E:1_BC} can be rewritten as
\begin{align}\label{gr.flow.2}
\pa_t\rho = \dv{ \mathbb{M}' \nabla \frac{\delta \en_{MB}[\rho]}{\delta\rho}},\quad \text{with  } \mathbb{M}' = \begin{pmatrix}
	\rho_A & \rho_A\rho_B\\
	\rho_A\rho_B & \rho_B
\end{pmatrix},
\end{align}
where
$$
\frac{\delta\en_{MB}[\rho]}{\delta\rho}\simeq Dh_{MB}(\rho_A,\rho_B) = 
(\log\rho_A, \log\rho_B )^\top .
$$
We remark that $\mathbb M'$ is positive semi-definite on $\overline\cD$.
Testing \eqref{gr.flow.2} against $Dh_{MB}(\rho_A,\rho_B)$ yields the balance equation for $\en_{MB}[\rho]$:
\begin{align*}
	\frac{\mathrm{d}}{\mathrm{d}t}\en_{MB}[\rho] &= \int_\Omega (\log\rho_A \pa_t\rho_A + \log\rho_B\pa_t \rho_B ) \mathrm{d}x\\
	&= - \int_\Omega ( \rho_A^{-1}\na\rho_A\cdot( \na\rho_A + \rho_A\na\rho_B ) + \rho_B^{-1}\na\rho_B\cdot( \na\rho_B + \rho_B\na\rho_A ))  \mathrm{d}x\\
	&= -\int_\Omega ( \rho_A^{-1}|\na\rho_A|^2 + \rho_B^{-1}|\na\rho_B|^2 + 2\na\rho_A\cdot\na\rho_B ) \mathrm{d}x\\
	&= -4\int_\Omega ( |\na\sqrt{\rho_A}|^2 + |\na\sqrt{\rho_B}|^2 + 2\sqrt{\rho_A\rho_B}\na\sqrt{\rho_A}\cdot\na\sqrt{\rho_B} )  \mathrm{d}x\\
	&= -4\int_\Omega ( \sqrt{\rho_A\rho_B}|\na(\sqrt{\rho_A}+\sqrt{\rho_B})|^2 + (1-\sqrt{\rho_A\rho_B})(|\na\sqrt{\rho_A}|^2 + |\na\sqrt{\rho_B}|^2))  \mathrm{d}x .
\end{align*}
Summarizing up gives the following:
\begin{align}\label{ei.B}
	\frac{\mathrm{d}}{\mathrm{d}t}\mathcal H_{MB}(\rho) + 4\int_\Omega ( \sqrt{\rho_A\rho_B}|\na(\sqrt{\rho_A}+\sqrt{\rho_B})|^2 + (1-\sqrt{\rho_A\rho_B})(|\na\sqrt{\rho_A}|^2 + |\na\sqrt{\rho_B}|^2)) \d x = 0.
\end{align}
\begin{remark}
	We point out that \eqref{ei.B} is only useful if $\rho_A\rho_B\leq 1$, otherwise we obtain terms we cannot control.
\end{remark}

%Moreover, \eqref{ei.B} represents an improvement of the previous entropy inequality, as the factor $1-\sqrt{\rho_A\rho_B}$ is larger than $(1-\sqrt{\rho_A\rho_B})^2$ when $\sqrt{\rho_A \rho_B} < 1$.

\subsection{A-priori estimates}
In this section, we give a-priori estimates on the agent densities $\rho_A$ and $\rho_B$. The estimates are obtained from energy balance equations \eqref{ei.A} and \eqref{ei.B}.

Throughout the section we assume  that the initial datum $\rho^{in}\in L^2(\Omega)$, where $\Omega \in \R^2$ is an open, bounded domain with Lipschitz boundary. As a consequence $\en_{MB}[\rho^{in}]\leq \en[\rho^{in}] < \infty$. Also, we denote $\Omega_T\equiv \Omega\times (0,T)$ for every $T>0$.

\begin{lem}[mass conservation]
System \eqref{E:system}-\eqref{E:1_BC} conserves mass. In particular we have the following estimate:
	\begin{equation}
	\label{rhoL1Linf}
	\|\rho_i\|_{L^\infty(0,T; L^1(\Omega))} = \|\rho_{i}^{in}\|_{L^1(\Omega)},\quad i\in \{A,B\}.
	\end{equation}
\end{lem}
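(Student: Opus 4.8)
The plan is to recognize the statement as the standard mass-conservation identity: the total mass $\int_\Omega \rho_i \dx$ of each species is a conserved quantity, and because the densities are nonnegative (they take values in $\R^2_+$), this mass coincides with the $L^1(\Omega)$ norm. The whole argument is a one-line test-function computation exploiting the divergence form of the right-hand side together with the no-flux boundary conditions.

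First I would test the weak formulation of \eqref{E:system} against the constant test function $\phi\equiv 1$. Since the right-hand side of each equation is in divergence form, integrating over $\Omega$ and applying the divergence theorem leaves only a boundary integral of the normal flux:
\[
\frac{\mathrm{d}}{\mathrm{d}t}\int_\Omega \rho_A \dx = \int_{\pa\Omega}(\na\rho_A + \rho_A\na\rho_B)\cdot\nu \, \mathrm{d}S .
\]
The homogeneous Neumann boundary conditions \eqref{E:1_BC} give $\pa_\nu\rho_A = \pa_\nu\rho_B = 0$, so the integrand vanishes and the right-hand side is zero; equivalently, at the level of the weak formulation the choice $\na\phi = 0$ makes the entire flux term drop out directly, sidestepping any boundary trace. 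The identical computation applies to $\rho_B$.

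Consequently $t\mapsto\int_\Omega\rho_i(t,\cdot)\dx$ is constant in time and equal to $\int_\Omega\rho_i^{in}\dx$. Invoking the nonnegativity of the densities then converts the integral into the norm, $\|\rho_i(t)\|_{L^1(\Omega)} = \int_\Omega\rho_i(t,\cdot)\dx = \int_\Omega\rho_i^{in}\dx = \|\rho_i^{in}\|_{L^1(\Omega)}$ for a.e.\ $t$, and taking the essential supremum over $t\in(0,T)$ yields \eqref{rhoL1Linf}.

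The computation carries no genuine analytic difficulty, and I would flag this honestly: there is no hard step, only a point of rigor. The single thing requiring care is justifying the integration by parts (or the admissibility of constant test functions) at the level of weak solutions, which amounts to checking that the flux $\na\rho_i + \rho_i\na\rho_j$ is integrable enough for the manipulation to be legitimate. This is guaranteed by the functional framework in which the solution lives and by the gradient bounds coming from the energy balances \eqref{ei.A}--\eqref{ei.B}, so the formal computation above is in fact rigorous in the relevant solution class.
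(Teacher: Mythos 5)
Your proof is correct and follows essentially the same route as the paper, which simply integrates the equations over $\Omega$ (equivalently, tests against $\phi\equiv 1$) and uses the divergence structure with the homogeneous Neumann conditions to conclude that each mass is constant in time. Your additional remarks on the admissibility of the constant test function and the nonnegativity of the densities only make explicit what the paper's one-line argument leaves implicit.
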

\begin{proof}
	Integrating \eqref{E:system}-\eqref{E:1_BC} in $\Omega$ yields 
	\begin{equation}
	\label{mass.cons}
	\int_\Omega\rho_i(t) \mathrm{d}x = \int_\Omega\rho_{i}^{in} \mathrm{d}x,\quad i \in \{A,B\},~~t>0.
	\end{equation} Thus \eqref{rhoL1Linf} holds.
\end{proof}
\subsubsection{Estimates from ``natural'' energy balance equation \eqref{ei.A}.}
\begin{lem} We obtain the following estimates for $\rho$:
	\begin{align}
	\label{est.1b}
	\norm{ \left( 1 - \sqrt{\rho_A \rho_B} \right) \nabla 
		\sqrt{\rho_i}}_{L^2(\Omega_T)} &\leq C,\quad i \in \{A,B\},\\
	\label{est.1a}
	\norm{ \left( 1 + \sqrt{\rho_A \rho_B} \right) \nabla \left( \sqrt{\rho_A}+\sqrt{\rho_B} \right) }_{L^2(\Omega_T)} &\leq \tilde{C},
	\\	\label{est.na.p} \|(\sqrt{\rho_A\rho_B}-1)^{2}\|_{L^{4/3}(0,T; W^{1,4/3}(\Omega))} &\leq C_T,
	\end{align} where $C, \tilde{C}, C_T >0$ are some constants, $C_T$ depending on $T>0$. Moreover, the following estimates hold true for $\sqrt{\rho_A} + \sqrt{\rho_B}$:
	\begin{align}
	\label{est.na.sum}
	\|\sqrt{\rho_A} + \sqrt{\rho_B}\|_{L^2(0,T; H^1(\Omega))}\leq \tilde{C}_T, \\
	\label{est.sum.L4}
	\|\sqrt{\rho_A}+\sqrt{\rho_B}\|_{L^4(\Omega_T)}\leq 
	\hat{C}_T,
	\end{align}where $\tilde{C}, \hat{C}_T >0$ some constant depending on $T>0$.
\end{lem}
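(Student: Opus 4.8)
The plan is to extract every bound from the ``natural'' balance \eqref{ei.A} (supplemented by mass conservation \eqref{rhoL1Linf}), exploiting the fact that, unlike \eqref{ei.B}, its dissipation is nonnegative on all of $\R^2_+$. Setting $a:=\rrhoA$, $b:=\rrhoB$ so that $ab=\rrhoAB$, I would first rewrite the dissipation integrand in \eqref{ei.A} in these variables. Using $\nabla\rho_i=2\sqrt{\rho_i}\,\nabla\sqrt{\rho_i}$ and the splitting $2ab\,\nabla a\cdot\nabla b=ab\,|\nabla(a+b)|^2-ab(|\nabla a|^2+|\nabla b|^2)$, the integrand collapses to
\[
4\bigl[(1-ab)^2(|\nabla a|^2+|\nabla b|^2)+2ab\,|\nabla(a+b)|^2\bigr],
\]
a sum of two manifestly nonnegative terms with no sign condition on $\rho_A\rho_B$. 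Integrating \eqref{ei.A} over $(0,T)$, using monotonicity $\mathcal{H}[\rho(t)]\le\mathcal{H}[\rho^{in}]$ and the pointwise bound $h\ge-2$ (whence $\mathcal{H}[\rho(T)]\ge-2|\Omega|$), then bounds the space--time integral of \emph{both} terms by $\mathcal{H}[\rho^{in}]+2|\Omega|$, a constant independent of $T$.

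The first term is precisely $\sum_{i}\|(1-\rrhoAB)\nabla\sqrt{\rho_i}\|_{L^2(\Omega_T)}^2$, which yields \eqref{est.1b}. For \eqref{est.1a} I would combine the two terms through the identity $(1+ab)^2=(1-ab)^2+4ab$ together with $|\nabla(a+b)|^2\le2(|\nabla a|^2+|\nabla b|^2)$, giving $(1+ab)^2|\nabla(a+b)|^2\le2(1-ab)^2(|\nabla a|^2+|\nabla b|^2)+4ab\,|\nabla(a+b)|^2$; the right-hand side is controlled by the two dissipation terms, so \eqref{est.1a} follows with a $T$-independent constant as well. Next, for \eqref{est.na.sum} and \eqref{est.sum.L4}: since $(1+\rrhoAB)^2\ge1$, the gradient part of \eqref{est.na.sum} is immediate from \eqref{est.1a}, and for the zeroth-order part I would use $2ab\le\rho_A+\rho_B$ and \eqref{rhoL1Linf} to get $\|\rrhoA+\rrhoB\|_{L^2(\Omega)}^2\le2(\|\rho_A^{in}\|_{L^1}+\|\rho_B^{in}\|_{L^1})$ for a.e.\ $t$; integrating in time produces the stated $T$-dependence.

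Because this $L^2$ bound is in fact uniform in $t$, the sum $\rrhoA+\rrhoB$ lies in $L^\infty(0,T;L^2)\cap L^2(0,T;H^1)$, and the two-dimensional Gagliardo--Nirenberg (Ladyzhenskaya) inequality $\|u\|_{L^4(\Omega)}^2\le C\|u\|_{L^2(\Omega)}\|u\|_{H^1(\Omega)}$, integrated over $(0,T)$, delivers \eqref{est.sum.L4}. All of these steps are carried out formally on sufficiently regular solutions, as befits a priori bounds.

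The remaining estimate \eqref{est.na.p} is the one I expect to be the main obstacle, since it must knit together all the previous bounds at the precise exponent $4/3$. Writing $w:=(\rrhoAB-1)^2$, I would compute $\nabla w=-2(1-ab)(b\nabla a+a\nabla b)$ and bound $\|\nabla w\|_{L^{4/3}(\Omega_T)}$ by H\"older with $\tfrac14+\tfrac12=\tfrac34$, pairing $a,b\in L^4(\Omega_T)$ from \eqref{est.sum.L4} against $(1-ab)\nabla a,(1-ab)\nabla b\in L^2(\Omega_T)$ from \eqref{est.1b}. The zeroth-order control of $w$ is the delicate point, as it is \emph{not} supplied by the dissipation: I would instead read off from energy monotonicity that $\int_\Omega\rho_A\rho_B\le\mathcal{H}[\rho^{in}]+2|\Omega|$ uniformly in $t$, hence $w\in L^\infty(0,T;L^1(\Omega))$, and then close the $W^{1,4/3}$ bound via Poincar\'e--Wirtinger $\|w-\bar w\|_{L^{4/3}(\Omega)}\le C\|\nabla w\|_{L^{4/3}(\Omega)}$ plus the mean bound $|\bar w|\le|\Omega|^{-1}\|w\|_{L^1}$, raised to the power $4/3$ and integrated in time.
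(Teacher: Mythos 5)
Your proof is correct and, in outline, follows the same route as the paper: every bound is harvested from the integrated balance \eqref{ei.A.int}, with Gagliardo--Nirenberg giving \eqref{est.sum.L4} and H\"older plus Poincar\'e giving \eqref{est.na.p}. Two of your local arguments, however, genuinely differ. First, the paper obtains \eqref{est.1a} and \eqref{est.1b} from the dissipation through the elementary inequalities $2(x^2+y^2)\geq (x\pm y)^2$ and $1+s\geq |1-s|$, summing two lower bounds to reconstitute $|\nabla\sqrt{\rho_i}|^2$; you instead expand the dissipation exactly, showing (with $a=\rrhoA$, $b=\rrhoB$) that the integrand equals $4\left[(1-ab)^2(|\nabla a|^2+|\nabla b|^2)+2ab\,|\nabla(a+b)|^2\right]$ --- this identity checks out --- so \eqref{est.1b} is read off immediately, and \eqref{est.1a} follows from $(1+ab)^2=(1-ab)^2+4ab$ together with $|\nabla(a+b)|^2\leq 2(|\nabla a|^2+|\nabla b|^2)$; this is a tidier organization of the same information, with both estimates coming from a single nonnegative decomposition. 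Second, for the zeroth-order part of \eqref{est.na.p} the paper bounds $(\rrhoAB-1)^2\leq C(1+\rho_A^2+\rho_B^2)$ and appeals to \eqref{est.sum.L4}, which controls the spatial mean of $w=(\rrhoAB-1)^2$ only in $L^1(0,T)$; you instead use entropy monotonicity together with $h\geq \rho_A\rho_B-2$ to get $\rho_A\rho_B\in L^\infty(0,T;L^1(\Omega))$, so the mean of $w$ is bounded uniformly in time and the Poincar\'e--Wirtinger step closes the $L^{4/3}(0,T;W^{1,4/3}(\Omega))$ bound directly. This second variation is more than cosmetic: it supplies exactly the time-slice control of the mean that the paper's terse justification leaves implicit, and it is the cleanest way to finish that estimate. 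Your remaining steps (mass conservation for the $L^2$ part of \eqref{est.na.sum}, the two-dimensional Ladyzhenskaya inequality for \eqref{est.sum.L4}, and the $L^4\times L^2\to L^{4/3}$ H\"older pairing for $\nabla w$) coincide with the paper's.
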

\begin{proof}
	 Integrating \eqref{ei.A} in the time interval $[0,T]$ with $T>0$ arbitrary leads to
		\begin{align}\label{ei.A.int}
		\en[\rho(T)] + 
		4\int_{\Omega_T}\left( \abs{ \nabla \sqrt{\rho_A} + \sqrt{ \rho_A \rho_B} \, \nabla \sqrt{\rho_B} }^2 + \abs{ \nabla \sqrt{\rho_B} + \sqrt{ \rho_A \rho_B} \, \nabla \sqrt{\rho_A} }^2
		\right) \mathrm{d}x  \mathrm{d}t \leq \en[\rho^{in}] .
		\end{align}
		However, since $2(x^2 + y^2)\geq (x\pm y)^2$ for every $x,y\in\R$, we deduce
		\begin{align*}
			\left |\nabla \sqrt{\rho_A} + \sqrt{\rho_A \rho_B}\,  \nabla \sqrt{\rho_B} \right |^2 + |\nabla \sqrt{\rho_B} + \sqrt{\rho_A \rho_B} \, \nabla \sqrt{\rho_A}|^2  \geq \frac{1}{2} \left | (1 + \sqrt{\rho_A \rho_B}) \nabla (\sqrt{\rho_A} + \sqrt{\rho_B}) \right |^2
		\end{align*} which, together with \eqref{ei.A.int}, yield \eqref{est.1a}. On the other hand, given that $1 + x \geq |1-x|$ for $x \geq 0$, the above inequality yields
	\begin{align*}
		\left |\nabla \sqrt{\rho_A} + \sqrt{\rho_A \rho_B} \, \nabla \sqrt{\rho_B} \right |^2 + |\nabla \sqrt{\rho_B} + \sqrt{\rho_A \rho_B} \, \nabla \sqrt{\rho_A}|^2  \geq \frac{1}{2} \left | (1 - \sqrt{\rho_A \rho_B}) \nabla (\sqrt{\rho_A} + \sqrt{\rho_B}) \right |^2
	\end{align*} while (since $2(x^2 + y^2)\geq (x\pm y)^2$ for every $x,y\in\R$) the following inequality is also true
	\begin{align*}
		\left |\nabla \sqrt{\rho_A} + \sqrt{\rho_A \rho_B} \, \nabla \sqrt{\rho_B} \right |^2 + |\nabla \sqrt{\rho_B} + \sqrt{\rho_A \rho_B} \, \nabla \sqrt{\rho_A}|^2  \geq \frac{1}{2} \left | (1 - \sqrt{\rho_A \rho_B}) \nabla (\sqrt{\rho_A} - \sqrt{\rho_B}) \right |^2
	\end{align*} By summing the two previous inequalities and exploiting the elementary property $2(x^2 + y^2) \geq (x \pm y)^2$ for every $x,y \in \R$ as well as \eqref{ei.A.int} we obtain \eqref{est.1b}.
%%%%%
		
The definition of $\en$ and \eqref{ei.A.int} lead to
\begin{equation}
\label{est.L1logL}
\|\rho_A\log\rho_A\|_{L^\infty(0,T; L^1(\Omega))} + 
\|\rho_B\log\rho_B\|_{L^\infty(0,T; L^1(\Omega))}\leq C.
\end{equation}Then \eqref{est.na.sum} follows. 
The following Gagliardo-Nirenberg inequality holds since $\Omega\subset\R^2$:
\begin{align}\label{GN}
\|u\|_{L^4(\Omega)}\leq C_{GN}\|u\|_{L^2(\Omega)}^{1/2}\|u\|_{H^1(\Omega)}^{1/2}, \text{  for all  } u\in H^1(\Omega) .
\end{align}
Choosing $u=\sqrt{\rho_A}+\sqrt{\rho_B}$ in the above inequality and integrating it in time lead to
$$
\int_0^T\|\sqrt{\rho_A}+\sqrt{\rho_B}\|_{L^4(\Omega)}^4 \mathrm{d} t\leq C_{GN}^4 \left( \sup_{t\in [0,T]}\|\sqrt{\rho_A}(t)+\sqrt{\rho_B}(t)\|_{L^2(\Omega)}^{2}\right)
\int_0^T\|\sqrt{\rho_A}(t)+\sqrt{\rho_B}(t)\|_{H^1(\Omega)}^{2} \mathrm{d}t,
$$
which, thanks to \eqref{rhoL1Linf}, \eqref{est.na.sum}, leads to \eqref{est.sum.L4}.
		
%%%%%%

From \eqref{est.1b}, \eqref{est.sum.L4} and the identity
\begin{align*}
\frac{1}{2}\nabla [(\sqrt{\rho_A\rho_B}-1)^{2}] &= 
(\sqrt{\rho_A\rho_B}-1)\nabla\sqrt{\rho_A\rho_B} \\
&= \sqrt{\rho_B}(\sqrt{\rho_A\rho_B}-1)\nabla\sqrt{\rho_A} + \sqrt{\rho_A}(\sqrt{\rho_A\rho_B}-1)\nabla\sqrt{\rho_B} 
\end{align*}
we deduce via H\"{o}lder inequality that 
\begin{align*}
	\frac{1}{2} &\|\nabla [(\sqrt{\rho_A \rho_B}-1)^2]\|_{L^{4/3}(\Omega_T)}\\
	&\leq \|\sqrt{\rho_B}\|_{L^4(\Omega_T)} \|(\sqrt{\rho_A \rho_B} -1) \nabla \sqrt{\rho_A}\|_{L^2(\Omega_T)} + \|\sqrt{\rho_A}\|_{L^4(\Omega_T)} \|(\sqrt{\rho_A \rho_B} -1) \nabla \sqrt{\rho_B}\|_{L^2(\Omega_T)} 
\end{align*} so \eqref{est.1b}, \eqref{est.sum.L4} lead to 
$$
\|\nabla [(\sqrt{\rho_A\rho_B}-1)^{2}] \|_{L^{4/3}(\Omega_T)}\leq C_T .
$$
Since $(\sqrt{\rho_A\rho_B}-1)^{2}\leq C(1+\rho_A^2+\rho_B^2)$,
from the above estimate and \eqref{est.sum.L4}, as well as Poincar\'e's Lemma, we obtain \eqref{est.na.p}.

\end{proof}
\begin{lem}[estimate on $\rho_A \rho_B$]
	We have the following estimate for the product of the agent densities $\rho_A$ and $\rho_B$
	\begin{equation}
	\label{est.Hm1.b}
	\|\rho_A\rho_B\|_{L^{3/2}(\Omega_T)}\leq C_T,
	\end{equation}
	where $C_T > 0$ is a constant depending on $T>0$.
\end{lem}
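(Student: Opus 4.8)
The plan is to control $\rho_A\rho_B$ through the auxiliary quantity $(\sqrt{\rho_A\rho_B}-1)^2$, for which the space-time gradient estimate \eqref{est.na.p} is already available, and to supply the missing time-integrability by combining it with a uniform-in-time $L^1$ bound extracted from the energy $\en$. Throughout, write $p:=\sqrt{\rho_A\rho_B}$ and $g:=(p-1)^2$. The elementary inequality $(a+b)^2\leq 2a^2+2b^2$ gives the pointwise bound $\rho_A\rho_B=p^2\leq 2g+2$, so it suffices to prove $g\in L^{3/2}(\Omega_T)$ and then transfer the bound to $\rho_A\rho_B$ at the very end (note that one cannot work with $\nabla(\rho_A\rho_B)$ directly, since only $\nabla g$, and not $\nabla p$, is controlled).

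First I would establish a uniform-in-time $L^1$ bound on $\rho_A\rho_B$. Estimate \eqref{ei.A.int} (with $T$ arbitrary) yields $\en[\rho(t)]\leq\en[\rho^{in}]<\infty$ for all $t$, and since $s\log s-s\geq -1$ for $s\geq 0$, the definition of $h$ gives
\[
\int_\Omega \rho_A\rho_B\dx \;=\; \en[\rho(t)]-\int_\Omega\big(\rho_A\log\rho_A-\rho_A+\rho_B\log\rho_B-\rho_B\big)\dx \;\leq\; \en[\rho^{in}]+2|\Omega|,
\]
uniformly in $t\in[0,T]$. Applying once more $(a+b)^2\leq 2a^2+2b^2$ to $g=(p-1)^2\leq 2p^2+2$, we get $\int_\Omega g\dx\leq 2\int_\Omega\rho_A\rho_B\dx+2|\Omega|$, so that $g\in L^\infty(0,T;L^1(\Omega))$ with norm bounded by a constant depending only on the data.

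Next, \eqref{est.na.p} states $g\in L^{4/3}(0,T;W^{1,4/3}(\Omega))$, and since $\Omega\subset\R^2$ the Sobolev embedding $W^{1,4/3}(\Omega)\hookrightarrow L^4(\Omega)$ holds, whence $g\in L^{4/3}(0,T;L^4(\Omega))$. I would then interpolate $L^{3/2}(\Omega)$ between $L^1(\Omega)$ and $L^4(\Omega)$, which for a.e.\ $t$ gives $\|g(t)\|_{L^{3/2}}\leq\|g(t)\|_{L^1}^{5/9}\|g(t)\|_{L^4}^{4/9}$. Raising to the power $3/2$, integrating in time and using Hölder's inequality in $t$,
\begin{align*}
\int_0^T\|g(t)\|_{L^{3/2}}^{3/2}\dt
&\leq \Big(\sup_{t\in[0,T]}\|g(t)\|_{L^1}\Big)^{5/6}\int_0^T\|g(t)\|_{L^4}^{2/3}\dt \\
&\leq \Big(\sup_{t\in[0,T]}\|g(t)\|_{L^1}\Big)^{5/6}\,T^{1/2}\Big(\int_0^T\|g(t)\|_{L^4}^{4/3}\dt\Big)^{1/2}.
\end{align*}
All factors on the right are finite by the two previous steps, so $g\in L^{3/2}(\Omega_T)$; the pointwise bound $\rho_A\rho_B\leq 2g+2$ then yields \eqref{est.Hm1.b} with a constant $C_T$ depending on $T$ through the factor $T^{1/2}$ (and $|\Omega|$).

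The only genuinely non-routine point is the uniform $L^1$ control of $\rho_A\rho_B$: it is not visible in the dissipation terms of \eqref{ei.A} but comes from the monotonicity of $\en$ combined with the lower bound $s\log s-s\geq -1$, which is precisely where the presence of the $\rho_A\rho_B$ term inside $h$ is exploited. Once both an $L^\infty_tL^1_x$ and an $L^{4/3}_tL^4_x$ bound on $g$ are in hand, landing exactly on $L^{3/2}(\Omega_T)$ is merely a matter of selecting the interpolation exponents, and the computation above shows these choices are consistent.
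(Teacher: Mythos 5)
Your proof is correct, but it follows a genuinely different route from the paper's. The paper proves \eqref{est.Hm1.b} by the $H^{-1}$ (duality) method: it tests the system against $\psi(t)$ solving $-\Delta\psi = \rho_A+\rho_B-\langle\rho_A+\rho_B\rangle$ with Neumann conditions, exploits the fact that the summed flux is an exact gradient, $\nabla\rho_A+\rho_A\nabla\rho_B+\nabla\rho_B+\rho_B\nabla\rho_A=\nabla(\rho_A+\rho_B+\rho_A\rho_B)$, and obtains the space--time bound $\int_{\Omega_T}(\rho_A+\rho_B+\rho_A\rho_B)(\rho_A+\rho_B)\,\mathrm{d}x\,\mathrm{d}t\leq C_T$, from which \eqref{est.Hm1.b} follows via $\rho_A+\rho_B\geq 2\sqrt{\rho_A\rho_B}$; the hypothesis $\rho^{in}\in L^2(\Omega)$ enters there to control $\|\nabla\psi(0)\|_{L^2(\Omega)}$. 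You instead bypass the auxiliary elliptic problem entirely: you combine the uniform-in-time $L^1$ bound on $\rho_A\rho_B$ coming from the monotonicity of $\mathcal{H}$ together with $s\log s-s\geq-1$ (incidentally, the paper uses this very observation, in the form $\rho_A\rho_B\leq C+h(\rho_A,\rho_B)$, as an auxiliary step inside its own duality argument), the previously established bound \eqref{est.na.p} on $(\sqrt{\rho_A\rho_B}-1)^2$, the two-dimensional Sobolev embedding $W^{1,4/3}(\Omega)\hookrightarrow L^4(\Omega)$, and Lebesgue interpolation of $L^{3/2}$ between $L^1$ and $L^4$; your exponent bookkeeping ($\theta=5/9$, powers $5/6$ and $2/3$, Cauchy--Schwarz in time) is accurate, and there is no circularity since \eqref{est.na.p} is derived in the paper from \eqref{est.1b} and \eqref{est.sum.L4} alone. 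The trade-off: the paper's duality argument yields a strictly stronger intermediate estimate, namely $(\rho_A+\rho_B)\rho_A\rho_B\in L^1(\Omega_T)$, which controls cubic mixed quantities such as $\rho_A^2\rho_B$, whereas your argument delivers exactly the stated $L^{3/2}$ bound; in exchange, yours is softer (no elliptic regularity or dual test function needed) and shows that \eqref{est.Hm1.b} is already implicit in the energy estimates of the preceding lemma, at the cost of inheriting whatever integrability hypotheses underlie \eqref{est.na.p}.
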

\begin{proof}
	We give the proof by using the so-called $H^{-1}$ method,
	i.e.~by testing \eqref{E:system}-\eqref{E:1_BC} against $\psi\approx (-\Delta)^{-1}(\rho_A + \rho_B)$. For $t\in (0,T)$ define the function $\psi(t)$ as the only solution to
	\begin{align} \label{dual}
	\begin{cases} 
	-\Delta\psi(t) = \rho_A(t) + \rho_B(t) - \langle( \rho_A(t) + \rho_B(t))\rangle, \quad &\text{in }\Omega, \\
	\pa_\nu\psi(t) = 0\quad &\text{on }\pa\Omega, 
	\end{cases}
	\end{align}
	and 
	\[\int_\Omega\psi(t) \mathrm{d}x = 0,\]
	where 
	\[\langle\rho\rangle\equiv \int_{\Omega} \frac{\rho }{|\Omega|} \mathrm{d}x. \]
	We remark that $\langle\rho(t)\rangle = \langle\rho^{in}\rangle$ is constant in time thanks to \eqref{mass.cons}.
	Let us first compute the following: 
	\begin{align*}
	&\frac{\mathrm{d}}{\mathrm{d}t}\int_\Omega\frac{1}{2}|\nabla\psi|^2 \mathrm{d}x = 
	\int_\Omega \nabla\psi\cdot\nabla\pa_t\psi \mathrm{d}x = 
	\int_\Omega \psi\pa_t(-\Delta\psi) \mathrm{d}x = 
	\int_\Omega \psi\pa_t (\rho_A + \rho_B) \mathrm{d}x .
	\end{align*}
	Therefore, testing each equation in \eqref{E:system}-\eqref{E:1_BC} against $\psi$ and summing the equations lead to
	\begin{align*}
	\frac{\mathrm{d}}{\mathrm{d}t}\int_\Omega\frac{1}{2}|\nabla\psi|^2 \mathrm{d}x &= 
	-\int_\Omega\left( 
	\nabla\rho_A + \rho_A\nabla\rho_B + 
	\nabla\rho_B + \rho_B\nabla\rho_A
	\right)\cdot\nabla\psi \mathrm{d}x\\
	&= -\int_\Omega\nabla\left( 
	\rho_A + \rho_B + \rho_A\rho_B
	\right)\cdot\nabla\psi \mathrm{d}x\\
	&= -\int_\Omega\left( 
	\rho_A + \rho_B + \rho_A\rho_B
	\right)(\rho_A + \rho_B - \langle\rho_A + \rho_B\rangle) \mathrm{d}x .
	\end{align*}
	Thanks to the mass conservation \eqref{mass.cons}, the energy balance \eqref{ei.A} and the fact that $\rho_A\rho_B\leq C + h(\rho_A,\rho_B)$,
	we deduce \begin{align*}
	\frac{\mathrm{d}}{\mathrm{d}t}\int_\Omega\frac{1}{2}|\nabla\psi|^2 \mathrm{d}x +
	\int_\Omega\left( 
	\rho_A + \rho_B + \rho_A\rho_B
	\right)(\rho_A + \rho_B) \mathrm{d}x \leq C,
	\end{align*}
	and integrating the above inequality in $[0,T]$ yields \begin{align*}
	\int_\Omega|\nabla\psi(T)|^2  \mathrm{d}x + 
	\int_0^T\int_\Omega\left( 
	\rho_A + \rho_B + \rho_A\rho_B
	\right)(\rho_A + \rho_B) \mathrm{d}x  \mathrm{d}t \leq C_T + 
	\int_\Omega|\nabla\psi(0)|^2  \mathrm{d}x .
	\end{align*} 
	Testing \eqref{dual} against $\psi(t)$ and exploiting Poincar\'e's Lemma (remember that $\int_\Omega\psi(t) \mathrm{d}x = 0$) lead to \begin{align*}
	\|\nabla\psi(t)\|_{L^2(\Omega)}^2 &\leq \int_\Omega (\rho_A(t)+\rho_B(t))\psi(t) \mathrm{d}x\leq
	\|\rho_A(t)+\rho_B(t)\|_{L^2(\Omega)}\|\psi(t)\|_{L^2(\Omega)}\\
	&\leq C_P
	\|\rho_A(t)+\rho_B(t)\|_{L^2(\Omega)}\|\nabla\psi(t)\|_{L^2(\Omega)}
	\end{align*}
	which means $$
	\|\nabla\psi(t)\|_{L^2(\Omega)}\leq C_P
	\|\rho_A(t)+\rho_B(t)\|_{L^2(\Omega)} ,\quad t\in [0,T].
	$$
	In particular, since $\rho^{in}\in L^2(\Omega)$ by assumption, it follows that $\|\nabla\psi(0)\|_{L^2(\Omega)}\leq C$, so we conclude that
	\begin{align*}
	\int_\Omega|\nabla\psi(T)|^2 \mathrm{d}x + 
	\int_0^T\int_\Omega\left( 
	\rho_A + \rho_B + \rho_A\rho_B
	\right)(\rho_A + \rho_B) \mathrm{d}x  \mathrm{d}t \leq C_T .
	\end{align*}It follows
	\begin{equation*}
	%	\label{est.Hm1}
	\int_{\Omega_T} (\rho_A + \rho_B)\rho_A\rho_B \, \mathrm{d}x  \mathrm{d}t\leq C_T,
	\end{equation*}
	which, by Young's inequality,  
	$\rho_A + \rho_B\geq 2\sqrt{\rho_A\rho_B}$, leads to \eqref{est.Hm1.b}. 
\end{proof}

\begin{lem}[estimate on the fluxes] We have the following estimate for the fluxes 
	\begin{equation}
	\label{est.flux}
	\|\na\rho_A + \rho_A\na\rho_B\|_{L^{4/3}(\Omega_T)} + 
	\|\na\rho_B + \rho_B\na\rho_A\|_{L^{4/3}(\Omega_T)}\leq C_T.
	\end{equation} where $C_T$ is a constant depending on $T>0$.
\end{lem}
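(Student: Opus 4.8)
The plan is to read the flux estimate directly off the dissipation term in the already-integrated energy balance \eqref{ei.A.int}, and then upgrade it to an $L^{4/3}$ bound using the $L^4$ control of $\sqrt{\rho_A}+\sqrt{\rho_B}$ from \eqref{est.sum.L4}. The key observation is that the two dissipation integrands in \eqref{ei.A.int} are exactly the fluxes rescaled by $\rho_i^{-1/2}$. Indeed, since $\na\sqrt{\rho_i}=(2\sqrt{\rho_i})^{-1}\na\rho_i$, a direct computation gives
\[
\na\sqrt{\rho_A} + \sqrt{\rho_A\rho_B}\,\na\sqrt{\rho_B} = \frac{1}{2\sqrt{\rho_A}}\left(\na\rho_A + \rho_A\na\rho_B\right),
\]
and symmetrically for the $B$-flux. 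Squaring and inserting this into \eqref{ei.A.int} (where the total dissipation dominates each of its two nonnegative summands) yields the weighted, $T$-uniform bound
\[
\left\| \frac{\na\rho_A + \rho_A\na\rho_B}{\sqrt{\rho_A}} \right\|_{L^2(\Omega_T)} + \left\| \frac{\na\rho_B + \rho_B\na\rho_A}{\sqrt{\rho_B}} \right\|_{L^2(\Omega_T)} \leq C .
\]

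First I would record this weighted $L^2$ estimate. Then I would factor the flux as $\na\rho_A + \rho_A\na\rho_B = \sqrt{\rho_A}\cdot \big(\na\rho_A + \rho_A\na\rho_B\big)/\sqrt{\rho_A}$ and apply H\"older's inequality on $\Omega_T$ with exponents $4$ and $2$, whose harmonic combination $\tfrac34 = \tfrac14 + \tfrac12$ lands exactly on $4/3$:
\[
\|\na\rho_A + \rho_A\na\rho_B\|_{L^{4/3}(\Omega_T)} \leq \|\sqrt{\rho_A}\|_{L^4(\Omega_T)}\, \left\| \frac{\na\rho_A + \rho_A\na\rho_B}{\sqrt{\rho_A}} \right\|_{L^2(\Omega_T)} .
\]
The second factor is bounded by the weighted estimate above. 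For the first factor I would use the pointwise inequality $0\leq \sqrt{\rho_A}\leq \sqrt{\rho_A}+\sqrt{\rho_B}$, so that \eqref{est.sum.L4} gives $\|\sqrt{\rho_A}\|_{L^4(\Omega_T)}\leq \hat C_T$. Running the identical argument on the $B$-flux and summing then produces \eqref{est.flux}, with the $T$-dependence entering only through \eqref{est.sum.L4}, consistent with the stated constant $C_T$.

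I do not expect a genuine obstacle here: the statement is essentially H\"older bookkeeping built on the a-priori estimates already in hand. The only points needing care are verifying the algebraic identity relating each dissipation integrand to the scaled flux (so that the weight $\rho_i^{-1}$ under the $L^2$ norm is the correct one), and confirming that the two available controls pair up in the right way, namely an $L^2$ dissipation bound against an $L^4$ mass-type bound to give precisely $L^{4/3}$. No other regularity or compactness input is required.
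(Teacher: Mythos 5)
Your proposal is correct and follows essentially the same route as the paper: both rest on the identity $\na\rho_A + \rho_A\na\rho_B = 2\sqrt{\rho_A}\left(\na\sqrt{\rho_A} + \sqrt{\rho_A\rho_B}\,\na\sqrt{\rho_B}\right)$, a H\"older pairing with exponents $4$ and $2$, and the bound \eqref{est.sum.L4} on $\|\sqrt{\rho_A}\|_{L^4(\Omega_T)}$. The only cosmetic difference is that you bound the second factor in $L^2(\Omega_T)$ directly from the dissipation term in \eqref{ei.A.int}, whereas the paper rewrites it as $\na(\sqrt{\rho_A}+\sqrt{\rho_B}) + (\sqrt{\rho_A\rho_B}-1)\na\sqrt{\rho_B}$ and invokes the derived estimates \eqref{est.1a} and \eqref{est.1b}; since those estimates themselves come from \eqref{ei.A.int}, the two arguments are the same in substance.
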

%Let us now find an estimate for the fluxes. 
\begin{proof}
Since
\begin{align*}
\na\rho_A + \rho_A\na\rho_B &= 2\sqrt{\rho_A}\left( \na\sqrt{\rho_A} + \sqrt{\rho_A\rho_B}\na\sqrt{\rho_B} \right)\\
&= 2\sqrt{\rho_A}\left( \na(\sqrt{\rho_A}+\sqrt{\rho_B}) + (\sqrt{\rho_A\rho_B}-1)\na\sqrt{\rho_B} \right),
\end{align*}
from \eqref{est.1a}, \eqref{est.1b}, \eqref{est.sum.L4} it follows
\begin{align*}
\|\na\rho_A + \rho_A\na\rho_B\|_{L^{4/3}(\Omega_T)} &\leq 
2\|\sqrt{\rho_A}\|_{L^{4}(\Omega_T)}\left( \|\na(\sqrt{\rho_A}+\sqrt{\rho_B})\|_{L^{2}(\Omega_T)} + \|(\sqrt{\rho_A\rho_B}-1)\na\sqrt{\rho_B}\|_{L^{2}(\Omega_T)} \right)\\
&\leq C_T.
\end{align*}
Since a similar argument can be done for $\na\rho_B + \rho_B\na\rho_A$, we obtain \eqref{est.flux}.
\end{proof}
\begin{lem}[estimate on $\pa_t \rho_A$ and $\pa_t \rho_B$] We have the following estimate on the time derivative of $\rho_A$ and $\rho_B$:
	\begin{align}
	\|\pa_t\rho_A\|_{L^{4/3}(0,T; W^{1,4}(\Omega)')} + \|\pa_t\rho_B\|_{L^{4/3}(0,T; W^{1,4}(\Omega)')} \leq C_T . \label{est.rhot}
	\end{align}
	where $C_T$ is a constant depending on $T>0$.
\end{lem}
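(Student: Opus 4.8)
The plan is to read the bound off directly from the weak formulation of \eqref{E:system} by a duality argument, using the flux estimate \eqref{est.flux} as the essential input. Recall that, under the normalization $2\beta c=1$ and with the prefactor $1/4$ dropped, the first equation reads $\pa_t\rho_A = \dv{\na\rho_A + \rho_A\na\rho_B}$, with the analogous expression for $\rho_B$. First I would fix $t\in(0,T)$ and a test function $\phi\in W^{1,4}(\Omega)$ and pair $\pa_t\rho_A(t)$ against $\phi$. Integration by parts over $\Omega$ gives
\begin{align*}
\langle\pa_t\rho_A(t),\phi\rangle = -\int_\Omega \big(\na\rho_A(t) + \rho_A(t)\na\rho_B(t)\big)\cdot\na\phi\,\dx ,
\end{align*}
where the boundary term vanishes because the homogeneous Neumann conditions \eqref{E:1_BC} force the flux to be tangent to $\pa\Omega$: indeed $(\na\rho_A + \rho_A\na\rho_B)\cdot\nu = \pa_\nu\rho_A + \rho_A\pa_\nu\rho_B = 0$ on $\pa\Omega$.

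The core estimate is then a single application of H\"older's inequality with the conjugate exponents $4/3$ and $4$. Since $\|\na\phi\|_{L^4(\Omega)}\le\|\phi\|_{W^{1,4}(\Omega)}$, the identity above yields the pointwise-in-time bound
\begin{align*}
\|\pa_t\rho_A(t)\|_{W^{1,4}(\Omega)'} \le \|\na\rho_A(t) + \rho_A(t)\na\rho_B(t)\|_{L^{4/3}(\Omega)} .
\end{align*}
Raising to the power $4/3$ and integrating over $(0,T)$ converts this into a space-time bound
\begin{align*}
\|\pa_t\rho_A\|_{L^{4/3}(0,T;W^{1,4}(\Omega)')} \le \|\na\rho_A + \rho_A\na\rho_B\|_{L^{4/3}(\Omega_T)} ,
\end{align*}
and the right-hand side is controlled by $C_T$ thanks to the flux estimate \eqref{est.flux}. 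The identical computation applied to the second equation bounds $\|\pa_t\rho_B\|_{L^{4/3}(0,T;W^{1,4}(\Omega)')}$ by $\|\na\rho_B + \rho_B\na\rho_A\|_{L^{4/3}(\Omega_T)}$; summing the two contributions and invoking \eqref{est.flux} once more produces \eqref{est.rhot}.

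I do not expect a serious obstacle here: the entire content of the lemma is essentially the flux estimate \eqref{est.flux}, and the exponents $4/3$ and $4$ are chosen precisely so that the duality pairing closes. The only points requiring a little care are the justification that the boundary term genuinely disappears — which rests on the no-flux character of the Neumann condition, i.e.\ on the flux being tangential, rather than on $\pa_\nu\rho_i=0$ in isolation — and the measurability and integrability in time needed to pass from the pointwise-in-$t$ dual-norm bound to the $L^{4/3}(0,T)$ statement. Both are routine given the regularity already established for the fluxes.
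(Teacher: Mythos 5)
Your proposal is correct and follows essentially the same route as the paper: both arguments read the bound off the weak formulation by pairing $\pa_t\rho_i$ against a $W^{1,4}$ test function, applying H\"older with the conjugate exponents $4/3$ and $4$, and invoking the flux estimate \eqref{est.flux}. The only cosmetic difference is that you establish the dual-norm bound pointwise in $t$ and then integrate, whereas the paper performs the space-time duality pairing in one step over $\Omega_T$; the content is identical.
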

\begin{proof}
%Finally, let us now bound the time derivative of $\rho_A$, $\rho_B$. 
Given any test function $\psi$, bound \eqref{est.flux} yields
\begin{align*}
	\langle\pa_t\rho_A , \psi\rangle &= -\int_0^T\int_\Omega \na\psi\cdot\left( \na\rho_A + \rho_A\na\rho_B \right) \mathrm{d}x  \mathrm{d}t\leq
	C\|\na\psi\|_{L^4(\Omega_T)},
\end{align*}
which means that $\pa_t\rho_A$ is bounded in $L^{4/3}(0,T; W^{1,4}(\Omega)')$. In the same way one proves the same bound for $\pa_t\rho_B$ and obtain \eqref{est.rhot}.
\end{proof}
\subsubsection{Additional estimates from Maxwell-Boltzmann energy balance equation \eqref{ei.B}.}
%in the case $\rho\in\cD$.
In this subsection we assume that the solution $\rho$ to \eqref{E:system}-\eqref{E:1_BC} satisfies $\rho\in\cD$ a.e.~in $\Omega_T$. This means that \eqref{ei.B} holds. On the other hand, we wish to point out that $\rho$ fulfills also \eqref{ei.A}, which implies that the estimates derived in the previous subsection are additionaly satisfied.
\begin{lem} We have the following estimates on $\rho$:
	\begin{align}
%	\begin{split}
	\label{est.1}
 \|(1-\sqrt{\rho_A\rho_B})^{1/2}\nabla\sqrt{\rho_i}\|_{L^2(0,T; L^2(\Omega))} & \leq C_T,\qquad i\in \{A,B\}, \\
	\label{est.na.p.2}
	\|(1-\sqrt{\rho_A\rho_B})^{3/2}\|_{L^{4/3}(0,T; W^{1,4/3}(\Omega))} &\leq \tilde{C}_T, 
%	\end{split}
	\end{align} where $C_T, \tilde{C}_T >0$ are some constants depending on $T>0$.
%\begin{enumerate}[label=(\roman*)]
%	\item 
%	$$
%
%	\item \begin{align} 
%	\end{align}
%\end{enumerate}
	
\end{lem}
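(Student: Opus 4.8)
The plan is to mirror the argument used for estimates \eqref{est.1b} and \eqref{est.na.p}, but now starting from the Maxwell--Boltzmann balance \eqref{ei.B} in place of \eqref{ei.A}. First I would integrate \eqref{ei.B} over $[0,T]$. Since we are assuming $\rho\in\cD$ a.e.\ in $\Omega_T$, we have $\sqrt{\rho_A\rho_B}<1$, so both terms in the integrand of \eqref{ei.B} are nonnegative. Discarding the first nonnegative term $\sqrt{\rho_A\rho_B}\,|\na(\sqrt{\rho_A}+\sqrt{\rho_B})|^2$, I would retain
\begin{align*}
4\int_0^T\!\!\int_\Omega (1-\sqrt{\rho_A\rho_B})(|\na\sqrt{\rho_A}|^2+|\na\sqrt{\rho_B}|^2)\,\mathrm{d}x\,\mathrm{d}t \leq \en_{MB}[\rho^{in}] - \en_{MB}[\rho(T)].
\end{align*}
The right-hand side is controlled: $\en_{MB}[\rho^{in}]\leq\en[\rho^{in}]<\infty$ by the standing assumption $\rho^{in}\in L^2(\Omega)$, while the elementary bound $s\log s - s\geq -1$ for $s\geq 0$ gives $\en_{MB}[\rho(T)]\geq -2|\Omega|$. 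Hence the left-hand side is bounded by a constant, which is exactly \eqref{est.1} after taking square roots.

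For \eqref{est.na.p.2}, I would compute the gradient of the quantity of interest. Writing $u=1-\sqrt{\rho_A\rho_B}$ and using $\na\sqrt{\rho_A\rho_B}=\sqrt{\rho_B}\,\na\sqrt{\rho_A}+\sqrt{\rho_A}\,\na\sqrt{\rho_B}$, the chain rule gives
\begin{align*}
\na(u^{3/2}) = -\tfrac{3}{2}(1-\sqrt{\rho_A\rho_B})^{1/2}\left(\sqrt{\rho_B}\,\na\sqrt{\rho_A}+\sqrt{\rho_A}\,\na\sqrt{\rho_B}\right).
\end{align*}
Each summand factors as $\sqrt{\rho_j}$ times $(1-\sqrt{\rho_A\rho_B})^{1/2}\na\sqrt{\rho_i}$; since $\tfrac34=\tfrac14+\tfrac12$, Hölder's inequality with exponents $(4,2)$ bounds the $L^{4/3}(\Omega_T)$ norm of each summand by $\|\sqrt{\rho_j}\|_{L^4(\Omega_T)}\,\|(1-\sqrt{\rho_A\rho_B})^{1/2}\na\sqrt{\rho_i}\|_{L^2(\Omega_T)}$. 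The first factor is bounded by \eqref{est.sum.L4} and the second by the freshly obtained \eqref{est.1}, so $\|\na(u^{3/2})\|_{L^{4/3}(\Omega_T)}\leq C_T$.

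Finally, since $0\leq 1-\sqrt{\rho_A\rho_B}\leq 1$ on $\cD$, the function itself satisfies $u^{3/2}\leq 1$ pointwise, so $\|u^{3/2}\|_{L^{4/3}(\Omega_T)}\leq(|\Omega|\,T)^{3/4}$. Combining this with the gradient bound yields the full $L^{4/3}(0,T;W^{1,4/3}(\Omega))$ estimate \eqref{est.na.p.2}. I do not expect a genuine obstacle here: the whole argument is a sign-controlled energy estimate followed by a Hölder interpolation, and the only points requiring care are verifying that the dropped term in \eqref{ei.B} is nonnegative — which is precisely where the hypothesis $\rho\in\cD$ is used — and the bookkeeping of the Hölder exponents. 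The one mild subtlety is the rigorous justification of the integrated form of \eqref{ei.B}, namely that $\en_{MB}[\rho(T)]$ is finite and the balance holds (at least as an inequality) for the given solution; this should be stated explicitly, as it is what licenses the energy dissipation estimate that drives the whole proof.
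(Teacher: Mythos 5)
Your proposal is correct and follows essentially the same route as the paper: integrate the Maxwell--Boltzmann balance \eqref{ei.B}, use the sign of both dissipation terms on $\cD$ to isolate \eqref{est.1}, then combine the identity $\nabla[(1-\sqrt{\rho_A\rho_B})^{3/2}] = -\tfrac{3}{2}(1-\sqrt{\rho_A\rho_B})^{1/2}\nabla\sqrt{\rho_A\rho_B}$ with H\"older, \eqref{est.sum.L4} and \eqref{est.1} to control the gradient in $L^{4/3}(\Omega_T)$. The only (harmless) deviations are that you make explicit the lower bound $\en_{MB}[\rho(T)]\geq -2|\Omega|$, which the paper leaves implicit in \eqref{ei.B.int}, and that you close the $W^{1,4/3}$ estimate directly from the pointwise bound $0\leq(1-\sqrt{\rho_A\rho_B})^{3/2}\leq 1$ rather than invoking Poincar\'e's Lemma as the paper does.
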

\begin{proof}
Integrating \eqref{ei.B} in the time interval $[0,T]$ leads to the following
		\begin{align}\label{ei.B.int}
		\mathcal H_{MB}[\rho(T)] + 4\int_{\Omega_T} ( \sqrt{\rho_A\rho_B}|\na(\sqrt{\rho_A}+\sqrt{\rho_B})|^2 + (1-\sqrt{\rho_A\rho_B})(|\na\sqrt{\rho_A}|^2 + |\na\sqrt{\rho_B}|^2)) \mathrm{d}x \mathrm{d}t = \mathcal H_{MB}[\rho^{in}] ,
		\end{align} and thus \eqref{est.1}.
	From the identity
	\begin{align*}
		-\frac{2}{3}\nabla [(1-\sqrt{\rho_A\rho_B})^{3/2}] &= 
		(1-\sqrt{\rho_A\rho_B})^{1/2}\nabla\sqrt{\rho_A\rho_B} \\
		&=\sqrt{\rho_B} (1-\sqrt{\rho_A\rho_B})^{1/2}\nabla\sqrt{\rho_A} + \sqrt{\rho_A} (1-\sqrt{\rho_A\rho_B})^{1/2}\nabla\sqrt{\rho_B} 
	\end{align*}
	we deduce via H\"older inequality that
	\begin{align*}
		&\frac{2}{3} \|\nabla [(1-\sqrt{\rho_A\rho_B})^{3/2}]\|_{L^{4/3}(\Omega_T)} \\
		&\leq \|\sqrt{\rho_B}\|_{L^4(\Omega_T)} \|(1-\sqrt{\rho_A\rho_B})^{\frac{1}{2}}\nabla\sqrt{\rho_A}\|_{L^2(\Omega_T)} + \|\sqrt{\rho_A}\|_{L^4(\Omega_T)} \|(1-\sqrt{\rho_A\rho_B})^{\frac{1}{2}}\nabla\sqrt{\rho_B}\|_{L^2(\Omega_T)}
	\end{align*}
	so \eqref{est.sum.L4}, \eqref{est.1} imply that 
	$$
	\|\nabla [(1-\sqrt{\rho_A\rho_B})^{3/2}] \|_{L^{4/3}(\Omega_T)}\leq C_T ,
	$$
%From \eqref{est.sum.L4}, \eqref{est.1} and the identity
%		\begin{align*}
%		-\frac{2}{3}\nabla [(1-\sqrt{\rho_A\rho_B})^{3/2}] &= 
%		(1-\sqrt{\rho_A\rho_B})^{1/2}\nabla\sqrt{\rho_A\rho_B} \\
%		&=\sqrt{\rho_B} (1-\sqrt{\rho_A\rho_B})^{1/2}\nabla\sqrt{\rho_A} + \sqrt{\rho_A} (1-\sqrt{\rho_A\rho_B})^{1/2}\nabla\sqrt{\rho_B} 
%		\end{align*}
%		we deduce that 
%		$$
%		\|\nabla [(1-\sqrt{\rho_A\rho_B})^{3/2}] \|_{L^{4/3}(\Omega_T)}\leq C_T .
%		$$
		From the above estimate and the bound $0 \leq (1- \sqrt{\rho_A \rho_B})^{3/2} \leq 1$ coming from the assumption $\rho_A \rho_B \leq 1$  a.e.~in $\Omega$ we obtain via Poincaré's Lemma \eqref{est.na.p.2}.
\end{proof}

\begin{remark}
	Estimate \eqref{est.1} is an improvement of \eqref{est.1b} (since the gradient of $\sqrt{\rho_i}$ is less degenerate in the region $\{\rho_A\rho_B=1\}$). Similarly, estimate \eqref{est.na.p.2} is better than \eqref{est.na.p} as the bounds for $\nabla\sqrt{\rho_A}$, $\nabla\sqrt{\rho_B}$ are less degenerate.
\end{remark}
%\medskip
%\begin{remark} Note that \eqref{E:4system} satisfies the following:
%\begin{align*}
%\frac{\mathrm{d}\mathcal{H}}{\mathrm{d}t} &+ \frac{1}{4} \int_\Omega \left[ \rho_A \abs{\nabla \left( \log \rho_A + 2 \beta g_B \right)}^2 + \rho_B \abs{\nabla \left( \log \rho_B + 2 \beta g_A \right) }^2 \right] \mathrm{d}x + 2\beta \int_\Omega \left( \rho_A g_B + \rho_B g_A \right) \mathrm{d}x\\
%&= 4 \beta c \int_\Omega \rho_A \rho_B \mathrm{d}x\\
%&\leq 2 \beta c \int_\Omega \left( \rho_A^2 + \rho_B^2\right) \mathrm{d}x\\
%&= 2 \beta c \left( \norm{\sqrt{\rho_A}}_{L^4(\Omega)}^4 + \norm{\sqrt{\rho_B}}_{L^4(\Omega)}^4 \right) .
%\end{align*}
%\end{remark}

\section{Existence analysis}\label{S:existence}
In this section we provide results on local-in time existence of strong solutions, define the notion of weak solutions and perform a weak stability analysis.

We consider the scaled equations with homogeneous Neumann boundary conditions ~in a bounded, open $\Omega\subset\R^2$ with Lipschitz boundary:
\begin{align}
\begin{cases}\label{eq1}
\pa_t\rho_A = \dv{\na\rho_A + \rho_A\na\rho_B},\quad &\mbox{in }\Omega\times (0,\infty), \\
\rho_A(0) =\rho_A^{in},\quad  &\mbox{in }\Omega.
\end{cases}\\
\begin{cases}\label{eq2}
\pa_t\rho_B = \dv{\na\rho_B + \rho_B\na\rho_A},\quad &\mbox{in }\Omega\times (0,\infty), \\
\rho_B(0)=\rho_B^{in},\quad  &\mbox{in }\Omega,
\end{cases} 
\end{align} with 
%\begin{cases}
\begin{align} \label{bc}
\pa_\nu \rho_A =\pa_\nu \rho_B=0\qquad \mbox{on }\pa\Omega\times (0,\infty).
\end{align}
%\end{cases}

An analytical study of \eqref{eq1}--\eqref{bc} is the content of the next part.
\subsection{Local-in-time existence of strong solutions.}
The diffusion matrix of \eqref{eq1} and \eqref{eq2}  is given by
$$
A(\rho) = \begin{pmatrix}
1 & \rho_A\\ \rho_B & 1
\end{pmatrix},\qquad \rho_A, \rho_B\geq 0,
$$
has eigenvalues $\lambda_\pm(\rho) = 1\pm\sqrt{\rho_A\rho_B}$. 
Therefore, $\lambda_+(\rho)\geq\lambda_-(\rho)>0$ for $\rho = (\rho_A,\rho_B)\in \mathcal{D}$, where
$$
\mathcal{D}= \left\{ \rho \in\R^2_+ \, \mid \, \rho_A\rho_B < 1 \right\}.
$$
Applying \cite[Thr.~3.1]{J17} to \eqref{eq1}, \eqref{eq2} yields the following
\begin{lem}[Local-in-time existence] Let $(\rho_A^{in}, \rho_B^{in})\in W^{1,p}(\Omega; \R^2)$ for some $p>2$. Assume that there exists $\epsilon_0>0$ such that
$$
\min \left \{ \rho_A^{in}(x), \rho_B^{in}(x), 1-\rho_A^{in}(x)\rho_B^{in}(x) \right \}
\geq \epsilon_0\quad\mbox{a.e.~}x\in\Omega .
$$
Then there exists a unique maximal solution $\rho$ to \eqref{eq1}--\eqref{bc} satisfying
$\rho\in C^0([0,T^*), W^{1,p}(\Omega; \R^2))\cap C^\infty( (0,T^*); \R^2)$, with $0<T^*\leq\infty$, and there exists $\epsilon_1>0$ such that
$$
\min \left \{ \rho_A(t,x), \rho_B(t,x), 1-\rho_A(t,x)\rho_B(t,x) \right \}
\geq \epsilon_1\quad x\in\Omega ,~~t\in (0,T^*) .
$$
\end{lem}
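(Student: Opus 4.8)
The plan is to read \eqref{eq1}--\eqref{bc} as a quasilinear parabolic system of the form $\pa_t\rho = \dv{A(\rho)\na\rho}$ with the diffusion matrix $A(\rho)$ displayed just above, and then simply to verify the hypotheses of Amann's local well-posedness theorem as packaged in \cite[Thr.~3.1]{J17}. The only structurally nontrivial hypothesis of that theorem is \emph{normal ellipticity} of the differential operator, i.e.~that the real parts of the eigenvalues of $A(\rho)$ are strictly positive for $\rho$ in the admissible region; everything else (smoothness of the coefficients, compatibility of the boundary operator, regularity of the initial datum) is routine to check for the system at hand. So the proof is essentially a checklist, and there is no genuine analytic obstacle beyond correctly identifying the admissible set.

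First I would establish normal ellipticity. The eigenvalues of $A(\rho)$ are $\lambda_\pm(\rho) = 1\pm\sqrt{\rho_A\rho_B}$, which are real and satisfy $\lambda_+(\rho)\geq\lambda_-(\rho)>0$ precisely when $\rho\in\cD = \{\rho\in\R^2_+ : \rho_A\rho_B<1\}$. Hence $\rho\mapsto\dv{A(\rho)\na\rho}$ is normally elliptic exactly on $\cD$. The entries of $A$ are polynomials in $(\rho_A,\rho_B)$, so $A$ is $C^\infty$ (indeed real-analytic) on $\cD$; and the homogeneous Neumann conditions \eqref{bc} are the conormal boundary operator associated with the divergence-form principal part, which is an admissible boundary operator in Amann's framework on the Lipschitz domain $\Omega$.

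Next I would check that the initial datum is admissible. Since $p>2=\dim\Omega$, the Sobolev embedding $W^{1,p}(\Omega)\hookrightarrow C(\overline\Omega)$ holds, so $\rho^{in}$ is continuous on $\overline\Omega$. The hypothesis $\min\{\rho_A^{in},\rho_B^{in},1-\rho_A^{in}\rho_B^{in}\}\geq\epsilon_0$ then guarantees that the range $\rho^{in}(\overline\Omega)$ is a \emph{compact} subset of the open set $\cD$, bounded away from $\pa\cD=\{\rho_A=0\}\cup\{\rho_B=0\}\cup\{\rho_A\rho_B=1\}$; this is exactly the admissibility requirement for the datum. With all hypotheses verified, \cite[Thr.~3.1]{J17} yields a unique maximal solution with the claimed regularity $\rho\in C^0([0,T^*),W^{1,p}(\Omega;\R^2))\cap C^\infty((0,T^*);\R^2)$.

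Finally, the lower bound $\epsilon_1$ is part of the conclusion of the cited theorem: the maximal solution takes values in a compact subset of $\cD$ on its interval of existence, so the three quantities $\rho_A$, $\rho_B$, $1-\rho_A\rho_B$ remain bounded below by some $\epsilon_1>0$. The one point demanding care — and the reason both the hypothesis and the conclusion are confined to $\cD$ — is precisely this: normal ellipticity, hence the parabolic character of the system, degenerates as soon as $\rho_A\rho_B$ reaches $1$, so the solution can be continued (and the bound maintained) only as long as it stays inside $\cD$. The maximal time $T^*$ is exactly the time at which the solution would exit $\cD$ or its $W^{1,p}$-norm blow up, and it is the loss of ellipticity on $\{\rho_A\rho_B\geq 1\}$, rather than any failure of coercivity, that drives this. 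This is the delicate structural feature I would emphasize, even though the existence statement itself is a direct application.
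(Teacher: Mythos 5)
Your proposal is correct and follows essentially the same route as the paper: the paper's proof likewise consists of computing the eigenvalues $\lambda_\pm(\rho)=1\pm\sqrt{\rho_A\rho_B}$ of the diffusion matrix, observing that both are positive exactly on $\mathcal{D}=\{\rho\in\R^2_+ \,\mid\, \rho_A\rho_B<1\}$, and then invoking Amann's theory as packaged in \cite[Thr.~3.1]{J17}. The only difference is one of detail: you spell out the routine hypothesis-checking (smoothness of the coefficients, the Sobolev embedding $W^{1,p}(\Omega)\hookrightarrow C(\overline\Omega)$ for $p>2$, compactness of the range of $\rho^{in}$ in $\mathcal{D}$) that the paper leaves implicit.
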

This means that the solution exists as long as its value remins far away from the border of the region $\mathcal{D}$. Unfortunately, it is not clear how to guarantee such property for arbitrary large times.

\subsection{Weak solutions}

We first give a definition of a weak solution to \eqref{eq1}--\eqref{bc}. Let us first define the classes of functions
\begin{align}
	\label{class.X}
	\begin{split}
		X = \{ f\in C^1(\R^2_+):~~\exists C>0:~~ 
		&|f(u_A,u_B)|\leq C(1+u_A+u_B)^2, \\
		&|Df(u_A,u_B)|\leq C|u_A u_B-1|,~~\forall u_A,u_B\geq 0\},
	\end{split}
\end{align}
\begin{align}
	\label{class.Y}
	\begin{split}
		Y = \{
		\Psi\in C^1(\R_+):~~\exists C>0:~~ |\Psi(s)|\leq C\min\{|s-1|,1\},
		~~ |\Psi'(s)|\leq C,~~\forall s\geq 0\},
	\end{split}
\end{align}
\begin{align}
	\label{class.Z}
	\begin{split}
		Z = \{
		\Phi\in C^1(\R_+):~~\exists C>0:~~ |\Phi(s)|\leq Cs,
		~~ |\Phi'(s)|\leq C\min\{s,|s-1|,1\},~~\forall s\geq 0\}.
	\end{split}
\end{align}

%As a preliminary step, we put in evidence the existence of two different gradient-flow structures for the considered system, which are valid in the two opposite cases $\rho_A\rho_B<1$, $\rho_A\rho_B>1$, and which will be exploited in the existence proof.\\
%%
%For the case $\rho_A\rho_B<1$:
%\begin{align*}
%\pa_t\rho = \dv{M^{(1)}(\rho)\nabla D h^{(1)}(\rho)},\quad
%M^{(1)}(\rho) = 
%\begin{pmatrix}
%\rho_A & \rho_A\rho_B\\
%\rho_A\rho_B & \rho_B
%\end{pmatrix},\quad 
%h^{(1)}(\rho) = \sum_{i=A,B}(\rho_i\log\rho_i - \rho_i) .
%\end{align*}
%For the case $\rho_A\rho_B>1$:
%\begin{align*}
%\pa_t\rho = \dv{M^{(2)}(\rho)\nabla D h^{(2)}(\rho)},\quad
%M^{(2)}(\rho) = 
%\begin{pmatrix}
%\rho_A & 1\\
%1 & \rho_B
%\end{pmatrix},\quad 
%h^{(2)}(\rho) = \rho_A\rho_B .
%\end{align*}
%Notice that $M^{(1)}$, $M^{(2)}$ are positive definite for $\rho_A\rho_B<1$, $\rho_A\rho_B>1$, respectively. We also point out that $h^{(2)}$ is not convex nor concave.
%Finally, it is also worth noticing that, since ${h^{(1)}}_{\mid_{\rho_A\rho_B=1}}$ is not a first degree polynomial in $\rho_A$, $\rho_B$, it is not possible to define a global gradient flow structure by joining $h^{(1)}$ and $h^{(2)}$.

\begin{dfn}[Weak solution]\label{def.weaksol}
A Lebesgue-measurable function $\rho : \OmT\to\R^2_+$ is called a 
weak solution to \eqref{eq1}--\eqref{bc} if (and only if) the following properties are satisfied.
\begin{enumerate}[label=(\roman*)]
	\item It has the regularity:
	\begin{align}
		\label{reg.1}
		\rho_A , \rho_B, \text{ and } \rho_A\rho_B \in L^\infty(0,T; L^1(\Omega)),\quad
		%$\sup_{t>0}H[\rho(t)]<\infty$, 
		\pa_t\rho_A , \pa_t\rho_B\in 
		L^{4/3}(0,T; W^{1,4}(\Omega)'), 
	\end{align}
	\begin{align}
		\label{reg.2}
		\sqrt{\rho_A} + \sqrt{\rho_B}\in L^2(0,T; H^1(\Omega)),\quad
		(1 + \rrhoAB)\nabla[\sqrt{\rho_A} + \sqrt{\rho_B}]\in L^2(\Omega_T),
	\end{align}
	\begin{align}
		\label{reg.3}
		\begin{split}
			f(\rrhoA , \rrhoB) \in &L^2(0,T; H^{1}(\Omega)),~~ \text{ for all } f\in X.
		\end{split}
	\end{align}

\item The following weak formulation of \eqref{eq1}--\eqref{bc} holds for all $T>0$:
\begin{align}
\label{w.A}
\int_0^T\langle\pa_t\rho_A , \phi\rangle \mathrm{d}t + 2\int_0^T\int_\Omega\na\phi\cdot
\sqrt{\rho_A}\cdot\zeta_A  \dx\dt= 0\quad \text{for all } \phi\in L^4(0,T; W^{1,4}(\Omega)),\\
\label{w.B}
\int_0^T\langle\pa_t\rho_B , \phi\rangle \d t 
+ 2\int_0^T\int_\Omega\na\phi\cdot
\sqrt{\rho_B}\cdot\zeta_B \dx\dt = 0\quad\text{for all }\phi\in L^4(0,T; W^{1,4}(\Omega)),
\end{align}
\begin{align}
\label{w.ic}
\rho_A(t)\to \rho_A^{in},\quad
\rho_B(t)\to \rho_B^{in} \quad \mbox{ strongly in }
W^{1,4}(\Omega)' \mbox{ as }t\to 0, 
\end{align}
where the quantities $\zeta_A, \zeta_B\in L^2(\Omega\times (0,T))$ are identified by the relations
%holding in the sense of distributions,
\begin{align}
	\label{zeta.1}
	\zeta_A + \zeta_B = (1+\sqrt{\rho_A\rho_B})\nabla(\sqrt{\rho_A}+\sqrt{\rho_B})\quad
	\mbox{a.e.~in }\Omega_T,
\end{align}
% WEAK FORMULATION, OBSOLETE. DO NOT DELETE JUST TO BE SURE. 
\begin{align}
	\label{zeta.2}
	\begin{split}
		\int_{\Omega_T} & \Psi(\sqrt{\rho_A\rho_B}) ( \zeta_A - \zeta_B )\cdot\phi \dx \d t \\
		&=  -\int_{\Omega_T}
		\left[\Psi(\sqrt{\rho_A\rho_B}) (1-\sqrt{\rho_A\rho_B}) ( \sqrt{\rho_A} - \sqrt{\rho_B} ) \right]\dv{\phi} \dx \dt \\ 
		&- \int_{\Omega_T}( \sqrt{\rho_A} - \sqrt{\rho_B} )\nabla\left[ 
		\Psi(\sqrt{\rho_A\rho_B}) (1-\sqrt{\rho_A\rho_B}) \right]\cdot\phi \dx \dt ,
	\end{split}
\end{align}
for every $\phi\in C^1_c(\Omega_T; \R^2)$ and $\Psi \in Y$,
\begin{align}
	\label{zeta.3}
	\begin{split}
		\int_{\Omega_T} &
		2 \Phi(\rrhoAB)\left(
		\frac{\zeta_A}{\rrhoA} - \frac{\zeta_B}{\rrhoB}
		\right)\cdot\phi \dx \dt\\ 
		=&
		-\int_{\Omega_T}\left[ \Phi(\rrhoAB)(\log(\rho_A/\rho_B) + \rho_B-\rho_A) \right]\dv{\phi}\dx \dt\\
		&- \int_{\Omega_T}\frac{1}{2}(\log(\rho_A/\rho_B) + \rho_B-\rho_A)
		\frac{\Phi'(\rrhoAB)}{\rrhoAB - 1}\nabla [(\rrhoAB - 1)^2]\cdot\phi \dx \dt,
		%&+4\int_{\Omega_T}\frac{(\log(\rho_A/\rho_B) + \rho_B-\rho_A)\Phi(\rrhoAB)}{(1+\sqrt{\rho_A}+\sqrt{\rho_B})^5}\nabla [\rrhoA + \rrhoB]\cdot\phi dx dt,
	\end{split}
\end{align}
for every $\phi\in C^1_c(\Omega_T)$, $\Phi\in Z$.
%\begin{align}
%	\label{zeta.2} 
%	\begin{split}
%		\int_{\Omega_T}&\Psi(\sqrt{\rho_A},\sqrt{\rho_B}) ( \zeta_A - \zeta_B )\cdot\phi \d x \d t \\ 
%		&=  -\int_{\Omega_T}\left[\Psi(\sqrt{\rho_A},\sqrt{\rho_B}) (1-\sqrt{\rho_A\rho_B}) 
%		( \sqrt{\rho_A} - \sqrt{\rho_B} ) \right]\dv{\phi} \d x \d t \\ 
%		&-\int_{\Omega_T} ( \sqrt{\rho_A} - \sqrt{\rho_B} )\nabla\left[ \Psi(\sqrt{\rho_A},\sqrt{\rho_B}) (1-\sqrt{\rho_A\rho_B}) \right]
%		\cdot\phi \d x \d t,
%	\end{split}
%\end{align}
%for every $\phi\in C^1_c(\Omega_T; \R^2)$ and $\Psi \in C^1\cap W^{1,\infty}(\R_+^2)$ such that $\Psi(u_A,u_B)(1-u_A u_B)\in X$.
\item The mass of each species is conserved:
\begin{align}
\label{mass}
\int_\Omega\rho_A(t) \mathrm{d}x = \int_\Omega\rho_A^{in} \mathrm{d} x ,\quad
\int_\Omega\rho_B(t) \mathrm{d}x = \int_\Omega\rho_B^{in}  \mathrm{d}x ,\quad
t>0,
\end{align}
and the {\em integrated energy balance} is satisfied:
\begin{align}\label{ieb.zeta}
\mathcal{H}[\rho(T)] + 4\int_{\Omega_T}( |\zeta_A|^2 + |\zeta_B|^2 ) \, \mathrm{d}x  \mathrm{d}t \leq \mathcal{H}[\rho^{in}], \quad \text{ for all } T>0.
\end{align}
\end{enumerate}
\end{dfn}

\begin{remark}
	A more standard weak formulation of \eqref{eq1}--\eqref{bc} would simply require 
	%We point out that, if $\rrhoA, \rrhoB\in L^2(0,T; H^1(\Omega))$, which is the nondegenerate case, then 
	$\zeta_A = \nabla\rrhoA + \rrhoAB \, \nabla\rrhoB$, 
	$\zeta_B = \nabla\rrhoB + \rrhoAB \,\nabla\rrhoA$ a.e.~in $\Omega_T$, or equivalently $\zeta_A + \zeta_B = (1+\rrhoAB)\nabla(\rrhoA+\rrhoB)$, $\zeta_A - \zeta_B = (1-\rrhoAB)\nabla(\rrhoA-\rrhoB)$ a.e.~in $\Omega_T$.
	However, while the equality $\zeta_A + \zeta_B = (1+\rrhoAB)\nabla(\rrhoA+\rrhoB)$ in \eqref{zeta.1} makes sense as an identity between $L^1_{loc}(\Omega_T)$ functions because $\rrhoA+\rrhoB\in L^2(0,T; H^1(\Omega))$, the (formal) relation
	$\zeta_A - \zeta_B = (1-\rrhoAB)\nabla(\rrhoA-\rrhoB)$
	cannot be intended as an identity between $L^1_{loc}(\Omega_T)$ functions since the distributional gradient of $\rrhoA-\rrhoB$ is not in $L^1_{loc}(\Omega_T)$. This is due to the degenerate factor $(1-\rrhoAB)$ which prevents us from deriving a bound for $\nabla(\rrhoA-\rrhoB)$ in the region $\{\rrhoAB=1\}$. Def.~\ref{def.weaksol} provides a workaround to this issue by stating in \eqref{zeta.1}--\eqref{zeta.3} a ``renormalized'' formulation of the identities
	$\zeta_A = \nabla\rrhoA + \rrhoAB \, \nabla\rrhoB$, 
	$\zeta_B = \nabla\rrhoB + \rrhoAB \,\nabla\rrhoA$.
	%Property \eqref{reg.3} is to be understood, in the same spirit, as a weak formulation of the relation $(1-\rrhoAB)\nabla(\rrhoA-\rrhoB)\in L^2(\Omega_T)$. 
	It is straightforward to see (via a density argument) that if $\nabla(\rrhoA-\rrhoB)\in L^1_{loc}(\Omega_T)$ then \eqref{zeta.1}--\eqref{zeta.3} yield 
	$\zeta_A = \nabla\rrhoA + \rrhoAB \, \nabla\rrhoB$, 
	$\zeta_B = \nabla\rrhoB + \rrhoAB \,\nabla\rrhoA$ a.e.~in $\Omega_T$.
\end{remark}
%
%\begin{remark}
%	We point out that, if $\rrhoA, \rrhoB\in L^2(0,T; H^1(\Omega))$, which is the nondegenerate case, then 
%	$\zeta_A = \nabla\rrhoA + \rrhoAB \, \nabla\rrhoB$, 
%	$\zeta_B = \nabla\rrhoB + \rrhoAB \,\nabla\rrhoA$, as one would expect.
%\end{remark}

%\begin{dfn}[Constrained weak solution]\label{def.weaksol.c}
%A Lebesgue-measurable function $\rho : \OmT\to\R^2_+$ is called a {\em constrained weak solution} to \eqref{eq}--\eqref{bc} if 
%$\rho_A\rho_B\leq 1$ a.e.~in $\Omega\times (0,\infty)$,
%\eqref{reg.1}--\eqref{mass} are satisfied, and the {\em constrained (integrated) energy balance} \eqref{ei.B.int} holds.
%\end{dfn}

\subsection{Weak stability analysis} \label{sec:weak_stab}
In this section, we prove the following result. 
\begin{lem}[Weak stability]\label{lem.compact}
Let $\rho^{in} = (\rho_{A}^{in},\rho_{B}^{in}): \Omega\to\R^2_+$
such that $\rho_A^{in}$, $\rho_B^{in}\in L^2(\Omega)$.
Moreover, let $\rho^n = (\rho_A^n,\rho_B^n)$ be a sequence of 
weak solutions to \eqref{eq1}--\eqref{bc} having $\rho^{in}$ as initial datum according to Definition \ref{def.weaksol}. Assume furthermore that $\sqrt{\rho_A^n}, \sqrt{\rho_B^n}\in L^2(0,T; H^1(\Omega))$ for every $n\in\NN$. 
Then $\rho^n$ converges (up to subsequences) strongly in $L^1(\OmT)$ for every $T>0$ to a weak solution $\rho=(\rho_A,\rho_B) : \Omega\times (0,\infty)\to \R^2_+$ to \eqref{eq1}--\eqref{bc} in the sense of Definition \ref{def.weaksol}.
\end{lem}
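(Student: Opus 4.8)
The plan is to run the standard compactness program for gradient-flow cross-diffusion systems, with the understanding that the only uniform spatial control comes from the degenerate combinations of Section~\ref{S:energy}, so the recovery of the individual densities is the delicate point. First I would record that all the a-priori estimates of Section~\ref{S:energy} hold \emph{uniformly in $n$}: each $\rho^n$ satisfies the integrated energy balance \eqref{ieb.zeta} with the \emph{same} right-hand side $\mathcal H[\rho^{in}]<\infty$ (finite since $\rho^{in}\in L^2$), so $\zeta_A^n,\zeta_B^n$ are bounded in $L^2(\Omega_T)$ and $\mathcal H[\rho^n(T)]$ is bounded, whence \eqref{est.1a}, \eqref{est.1b}, \eqref{est.na.sum}, \eqref{est.sum.L4}, \eqref{est.na.p}, \eqref{est.Hm1.b}, \eqref{est.flux} and \eqref{est.rhot} are all uniform. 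Passing to a subsequence I would extract weak limits: $\rho_i^n\rightharpoonup\rho_i$ in $L^2(\Omega_T)$, $\rho_A^n\rho_B^n\rightharpoonup\pi$ in $L^{3/2}(\Omega_T)$, $\sqrt{\rho_A^n}+\sqrt{\rho_B^n}\rightharpoonup\sigma$ in $L^2(0,T;H^1(\Omega))$, $\zeta_i^n\rightharpoonup\zeta_i$ in $L^2(\Omega_T)$, and $\partial_t\rho_i^n\rightharpoonup\partial_t\rho_i$ in $L^{4/3}(0,T;W^{1,4}(\Omega)')$.

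The heart of the proof is the upgrade to strong convergence. The only quantity carrying clean uniform spatial regularity is $s^n:=\sqrt{\rho_A^n}+\sqrt{\rho_B^n}$, bounded in $L^2(0,T;H^1(\Omega))\cap L^4(\Omega_T)$ by \eqref{est.na.sum} and \eqref{est.sum.L4}. Using a nonlinear Aubin--Lions--type compactness argument adapted to the gradient-flow structure, feeding in the uniform time-derivative bound \eqref{est.rhot} on $\rho_i^n$ and the flux bound \eqref{est.flux} to control time translates, I would first obtain $s^n\to\sigma$ strongly in $L^2(\Omega_T)$ and a.e., which identifies $\sigma=\sqrt{\rho_A}+\sqrt{\rho_B}$. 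The \textbf{main obstacle}, and the genuinely delicate point, is then to promote this to strong (a.e.) convergence of $\sqrt{\rho_A^n}$ and $\sqrt{\rho_B^n}$ \emph{separately}: knowing only the sum $s^n$ and a symmetric function of the product determines the unordered pair $\{\sqrt{\rho_A^n},\sqrt{\rho_B^n}\}$ (the roots of $t^2-s^n t+\sqrt{\rho_A^n\rho_B^n}=0$), and the weighted estimate \eqref{est.1b} (improved to \eqref{est.1}) controls $\nabla(\sqrt{\rho_A^n}-\sqrt{\rho_B^n})$ only \emph{away} from the critical set $\{\rho_A\rho_B=1\}$, where ellipticity is lost; hence the sign of $\sqrt{\rho_A^n}-\sqrt{\rho_B^n}$, i.e.\ the labeling of the two densities, could a priori oscillate precisely there. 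I would handle this by working on the sets $\{\,|1-\sqrt{\rho_A^n\rho_B^n}|\ge\delta\,\}$, on which \eqref{est.1b} yields a genuine $H^1$ bound on $\sqrt{\rho_A^n}-\sqrt{\rho_B^n}$ and hence strong convergence fixing the sign, and then letting $\delta\to0$ through a diagonal argument while controlling the measure and integral contribution of the residual near-critical set via the $L^{3/2}$ bound \eqref{est.Hm1.b} on $\rho_A^n\rho_B^n$. This gives $\rho_i^n\to\rho_i$ a.e., hence strongly in $L^1(\Omega_T)$, and a.e.\ convergence of every continuous function of $\rho^n$.

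With $\rho^n\to\rho$ a.e.\ as the master fact, I would pass to the limit in the weak formulation. In \eqref{w.A}--\eqref{w.B} the products $\sqrt{\rho_i^n}\,\zeta_i^n$ converge weakly in $L^{4/3}(\Omega_T)$, as products of the a.e.-convergent $\sqrt{\rho_i^n}$ bounded in $L^4$ (hence strongly convergent in every $L^r$, $r<4$) with the weakly convergent $\zeta_i^n\in L^2$, which suffices against $\nabla\phi\in L^4$; relation \eqref{zeta.1} is identified because $1+\sqrt{\rho_A^n\rho_B^n}$ converges strongly and boundedly while $\nabla s^n\rightharpoonup\nabla\sigma$. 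For the renormalized identities \eqref{zeta.2}--\eqref{zeta.3} the classes $Y$ and $Z$ of \eqref{class.Y}--\eqref{class.Z} are tailored so that every coefficient is bounded in a space converging strongly by dominated convergence: $\Psi(\sqrt{\rho_A^n\rho_B^n})$ and $\Phi(\sqrt{\rho_A^n\rho_B^n})$ are bounded and converge a.e., and the apparently singular factor in \eqref{zeta.3} obeys $\Phi(\sqrt{\rho_A^n\rho_B^n})/\sqrt{\rho_A^n}\le C\sqrt{\rho_B^n}\in L^4$; thus each term passes to the limit as a weak$\times$strong product, the gradient terms being rewritten through the controlled quantities $\nabla[(\sqrt{\rho_A^n\rho_B^n}-1)^2]$ of \eqref{est.na.p}.

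Finally I would verify that $\rho$ is an admissible weak solution. The regularity \eqref{reg.1}--\eqref{reg.2} follows from weak lower semicontinuity of the relevant norms and the uniform bounds, and \eqref{reg.3} for $f\in X$ follows from \eqref{class.X} together with the a.e.\ convergence; the mass conservation \eqref{mass} and the initial condition \eqref{w.ic} pass using the uniform $\partial_t\rho_i^n$ bound \eqref{est.rhot}. The integrated energy balance \eqref{ieb.zeta} for $\rho$ then follows from weak lower semicontinuity of $\int_{\Omega_T}(|\zeta_A|^2+|\zeta_B|^2)$ under $\zeta_i^n\rightharpoonup\zeta_i$, combined with the lower semicontinuity $\mathcal H[\rho(T)]\le\liminf_n\mathcal H[\rho^n(T)]$, which is justified by the a.e.\ convergence and the uniform integrability provided by \eqref{est.L1logL} and \eqref{est.Hm1.b}. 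I expect Steps two (the labeling near $\{\rho_A\rho_B=1\}$) to be by far the hardest; everything else is weak$\times$strong bookkeeping once a.e.\ convergence of $\rho^n$ is in hand.
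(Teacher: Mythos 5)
Your outline matches the paper's at the level of bookkeeping (uniform estimates, then strong convergence, then weak-times-strong passage to the limit), and you correctly locate the crux: recovering the individual densities, in particular the labeling of the pair near the critical set $\{\rho_A\rho_B=1\}$. But the two compactness steps that constitute the actual content of the proof do not hold up. First, the claimed strong convergence of $s^n=\sqrt{\rho_A^n}+\sqrt{\rho_B^n}$ via a ``nonlinear Aubin--Lions--type'' argument has unverifiable hypotheses: \eqref{est.rhot} controls $\pa_t\rho_i^n$ (hence $\pa_t(\rho_A^n+\rho_B^n)$), while the uniform spatial bound \eqref{est.na.sum} is on $s^n$, and $s^n$ is \emph{not} a function of $\rho_A^n+\rho_B^n$ --- it differs from $\sqrt{\rho_A^n+\rho_B^n}$ exactly through the cross term $\sqrt{\rho_A^n\rho_B^n}$, whose compactness is the very thing in question; no Aubin--Lions/Dubinskii variant pairs spatial regularity of one nonlinear function of $(\rho_A^n,\rho_B^n)$ with time regularity of a different one. (Note also that $\nabla\sqrt{\rho_A^n+\rho_B^n}$ is not uniformly bounded: only the degenerate combinations \eqref{est.1a}--\eqref{est.1b} are.) This mismatch is precisely why the paper abandons Aubin--Lions at this stage and uses the Div--Curl Lemma instead: the equations give $\divergence_{(t,x)}U_i^n=0$ for $U_i^n=(\rho_i^n,\,-\na\rho_i^n-\rho_i^n\na\rho_j^n)$, and \eqref{est.1b} gives curl bounds for fields of the form $(f(\sqrt{\rho_A^n\rho_B^n}),0,0,0)$ with $|f'(x)|\leq C|1-x|$, whence products of weak limits factorize; after truncation arguments this yields $\lim_n\int_{\Omega_T}(\rho_A^n+\rho_B^n)^2=\int_{\Omega_T}\bigl(\,\overline{\rho_A^n+\rho_B^n}\,\bigr)^2$ and therefore strong $L^2$ convergence of the sum.

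Second, and more seriously, your resolution of the labeling problem fails. You propose to work on $\{|1-\sqrt{\rho_A^n\rho_B^n}|\geq\delta\}$ and let $\delta\to0$ while ``controlling the measure and integral contribution of the residual near-critical set via the $L^{3/2}$ bound \eqref{est.Hm1.b}''. But \eqref{est.Hm1.b} is merely an integrability bound on $\rho_A^n\rho_B^n$; it gives no smallness of the measure of $\{\rho_A^n\rho_B^n\approx 1\}$, which can perfectly well have full measure (the densities may converge to the critical hyperbola on all of $\Omega_T$). Moreover these sets depend on $n$, so restricting to them and invoking compactness there is not a meaningful operation, and a weighted bound like \eqref{est.1b} restricted to such a set does not produce an $H^1(\Omega)$ function to which Rellich-type compactness applies. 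The paper needs two genuinely different ideas here: (i) a.e.\ convergence of the product \emph{everywhere} is obtained from a countable family of bump functions $f_{(r,u)}$ in state space whose gradients vanish in a neighbourhood of the critical curve (for these compositions honest $L^2(0,T;H^1(\Omega))$ and time-derivative bounds hold, so Aubin--Lions \emph{does} apply), followed by a Cantor diagonal extraction and a pointwise geometric three-circles argument; (ii) on the critical region $\{M\mu=1,\ M>\mu\}$ the sign $\sigma^n=\sgn(\rho_A^n-\rho_B^n)$ is recovered by yet another Div--Curl argument applied to entropy-variable fields built from $\psi(\sqrt{\rho_A^n\rho_B^n})\,g(\sqrt{\rho_A^n}+\sqrt{\rho_B^n})(\log\rho_A^n+\rho_B^n)$ and its symmetric counterpart, which yields $\overline{(\sigma^n)^2}=(\overline{\sigma^n})^2$ there and hence, by strict convexity of $x\mapsto x^2$, a.e.\ convergence of $\sigma^n$. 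Neither ingredient is present in, or replaceable by, your $\delta$-truncation scheme, so the proposal has a genuine gap at its core; your final limit passage and admissibility checks are fine once a.e.\ convergence is in hand and agree with the paper's concluding step.
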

\begin{remark}
The Lemma implies that the weak solutions described in Definition \ref{def.weaksol} are limit points of standard, {\em nondegenerate} weak solutions to the system: notice that the assumption
$\sqrt{\rho_A^n}, \sqrt{\rho_B^n}\in L^2(0,T; H^1(\Omega))$ for every $n\in\NN$, which is not true in general for weak solutions as for Def.~\ref{def.weaksol}. 
\end{remark}
\begin{remark} Another perspective into Lemma \ref{lem.compact} is the notion that the entropy structure of the system is robust, that is, the estimates provided by the entropy balance inequality are sufficient to show compactness of a suitable sequence of approximated solutions and prove that a limit point of such approximating sequence is a weak solution to the system in the sense of Def.~\ref{def.weaksol}. Unfortunately, no approximating sequence with the required regularity is known, this is why the global-in-time existence of weak solutions to the system is an open problem.  
\end{remark}
\begin{proof}({\bf Proof of Lemma \ref{lem.compact}})
By assumption, for every $T>0$, the approximate solution $\rho^n$ satisfies 
\begin{align}
\label{w.A.n}
\int_0^T\langle\pa_t\rho_A^n , \phi\rangle \dt
+ 2\int_0^T\int_\Omega\na\phi\cdot
\sqrt{\rho_A^n}\, (\na\sqrt{\rho_A^n} + \sqrt{\rho_A^n\rho_B^n}\,\na\sqrt{\rho_B^n}) \dx \dt &= 0, \text{ for all }\phi\in C^1_c(\Omega_T),\\
\label{w.B.n}
\int_0^T\langle\pa_t\rho_B^n , \phi\rangle \mathrm{d}t 
+ 2\int_0^T\int_\Omega\na\phi\cdot
\sqrt{\rho_B^n}\, (\na\sqrt{\rho_B^n} + \sqrt{\rho_A^n\rho_B^n}\,\na\sqrt{\rho_A^n}) \dx \dt&= 0,\text{ for all }\phi\in C^1_c(\Omega_T),
\end{align}
\begin{align}
\label{w.ic.n}
\rho_A^n(t)\to \rho_A^{in},\quad
\rho_B^n(t)\to \rho_B^{in}\quad \mbox{ strongly in } W^{-1,4/3}(\Omega)\mbox{ as }t\to 0,
\end{align}
as well as \eqref{ei.A.int}, and therefore also \eqref{est.1a}--\eqref{est.rhot}.\medskip\\
{\bf Notation.} Given a sequence $f_n$ in some Banach space $X$, that is weakly (or weak-*) convergent in $X$, we denote with $\overline{f_n}$ the weak (or weak-*) limit of $f_n$. Moreover, we define $\R_+ = [0,\infty)$.\medskip\\
The proof is divided into four steps.\medskip\\
{\bf Step 1: strong convergence of $\rho_A^n + \rho_B^n$.}\\
Let $f\in W^{1,\infty}(\R_+,\R_+)$ function such that 
$|f'(x)|\leq C|1-x|$ for $x\geq 0$.
Let us define the vector fields
\begin{align}\label{Un}
\begin{split}
U_A^n &= (\rho_A^n , -\nabla\rho_A^n - \rho_A^n\nabla\rho_B^n), \\
U_B^n &= (\rho_B^n , -\nabla\rho_B^n - \rho_B^n\nabla\rho_A^n),\\
V^n &= (f(\sqrt{\rho_A^n\rho_B^n}), 0,0,0).
\end{split}
\end{align}
From \eqref{est.sum.L4}, \eqref{est.flux} we deduce that for $i \in \{A,B\}$, $U_i^n$ is bounded in $L^{4/3}(\Omega_T)$, while \eqref{E:system}-\eqref{E:1_BC} means that $\divergence_{(t,x)}U_i^n = 0$ (a fortiori $\divergence_{(t,x)}U_i^n$ is relatively compact in $W^{-1,r}(\Omega_T)$ for every $r>1$).
On the other hand $V^n$ is bounded in $L^\infty(\Omega_T)$ and the antisymmetric part $\mbox{curl}_{(t,x)}V^n$ of its Jacobian can be estimated as
\begin{align*}
|\mbox{curl}_{(t,x)}V^n| \leq C |\na f(\sqrt{\rho_A^n\rho_B^n})| 
&\leq
C  |f'(\sqrt{\rho_A^n\rho_B^n})| (\sqrt{\rho_A^n}|\na\sqrt{\rho_B^n}|
+\sqrt{\rho_B^n}|\na\sqrt{\rho_A^n}|)\\
&\leq C(\sqrt{\rho_A^n} |\sqrt{\rho_A^n\rho_B^n}-1| |\na\sqrt{\rho_B^n}|
+\sqrt{\rho_B}|\sqrt{\rho_A^n\rho_B^n}-1| |\na\sqrt{\rho_A^n}|),
\end{align*}
which means, thanks to \eqref{est.1b}, \eqref{est.sum.L4}, that $\mbox{curl}_{(t,x)}V^n$ is bounded in $L^{4/3}(\Omega_T)$, and a fortiori relatively compact in $W^{-1,r}(\Omega_T)$ for some $r>1$.
Therefore, the Div-Curl Lemma [Theorem 10.21 in \cite{FN17}] implies that
\begin{align*}
\overline{U^n\cdot V^n} = \overline{U^n}\cdot\overline{V^n} \quad\mbox{a.e.~in }\Omega_T.
\end{align*}
Therefore, for $i \in  \{A,B\}$ and for every $f\in W^{1,\infty}(\R_+,\R_+)$, 
\begin{align}
	\label{wl.1}
	\begin{split}
\overline{\rho_i^n f(\sqrt{\rho_A^n\rho_B^n})} = 
\overline{\rho_i^n}\,
\overline{f(\sqrt{\rho_A^n\rho_B^n})},\mbox{ a.e.~in }\Omega_T, \text{ such that } |f'(x)|\leq C|1-x| \text{ for } x\geq 0.
\end{split}
\end{align}
Let us consider \eqref{wl.1} with $f(x) = \min\{k, F(x)\}$ where $k\geq 1$ is arbitrary and $F\in L^1_{loc}(\R_+,\R_+)$ such that $F'\in L^\infty(\R_+)$, $F(x)\leq C(1+x)$, 
$ |F'(x)|\leq C|1-x|$ for $x\geq 0$.
Let us now estimate the quantity
\begin{multline*}
\| \overline{\rho_i^n ( \min\{k, F(\sqrt{\rho_A^n\rho_B^n})\} - F(\sqrt{\rho_A^n\rho_B^n}) )}\|_{L^1(\Omega_T)} 
\\ \leq \liminf_{n\to\infty}\|\rho_i^n ( \min\{k, F(\sqrt{\rho_A^n\rho_B^n})\} - F(\sqrt{\rho_A^n\rho_B^n}) )\|_{L^1(\Omega_T)}\\
\leq \sup_{n\in\NN}\|\rho_i^n\|_{L^2(\Omega_T)}
\|\min\{k, F(\sqrt{\rho_A^n\rho_B^n})\} - F(\sqrt{\rho_A^n\rho_B^n})\|_{L^2(\Omega_T)},
\end{multline*}
where we used Fatou's Lemma in the first inequality. From \eqref{est.sum.L4} it follows
\begin{align*}
\|  \overline{\rho_i^n ( \min\{k, F(\sqrt{\rho_A^n\rho_B^n})\} - F(\sqrt{\rho_A^n\rho_B^n}) )}\|_{L^1(\Omega_T)}^2
&\leq C \sup_{n\in\NN}\int_{\Omega_T\cap \{F(\sqrt{\rho_A^n\rho_B^n})>k\} }\left|
F(\sqrt{\rho_A^n\rho_B^n})\right|^2 \mathrm{d}x \mathrm{d}t\\
&\leq \frac{C}{k} \sup_{n\in\NN}\int_{\Omega_T\cap \{F(\sqrt{\rho_A^n\rho_B^n})>k\} }\left|
F(\sqrt{\rho_A^n\rho_B^n})\right|^3 \mathrm{d}x \mathrm{d}t\\
&\leq \frac{C}{k} \sup_{n\in\NN}\int_{\Omega_T}
\left(1 + \sqrt{\rho_A^n\rho_B^n}\right)^3 \mathrm{d}x \mathrm{d}t ,
\end{align*}
which, thanks to \eqref{est.Hm1.b}, implies
\begin{align*}
\| & \overline{\rho_i^n ( \min\{k, F(\sqrt{\rho_A^n\rho_B^n})\} - F(\sqrt{\rho_A^n\rho_B^n}) )}\|_{L^1(\Omega_T)}\leq 
\frac{C}{\sqrt k} ,\quad i \in \{A,B\},\quad k\geq 1.
\end{align*}
In a similar way one shows that 
\begin{align*}
	\| & \overline{\rho_i^n}\, \overline{( \min\{k, F(\sqrt{\rho_A^n\rho_B^n})\} - F(\sqrt{\rho_A^n\rho_B^n}) )}\|_{L^1(\Omega_T)}\leq 
	\frac{C}{\sqrt k} ,\quad i\in \{A,B\},\quad k\geq 1.
\end{align*}
From the above inequalities and \eqref{wl.1} we deduce
\begin{multline*}
\|\overline{\rho_i^n F(\sqrt{\rho_A^n\rho_B^n})} - 
\overline{\rho_i^n }\,
\overline{F(\sqrt{\rho_A^n\rho_B^n})}\|_{L^1(\Omega_T)}\\
\leq \| \overline{\rho_i^n ( \min\{k, F(\sqrt{\rho_A^n\rho_B^n})\} - F(\sqrt{\rho_A^n\rho_B^n}) )}\|_{L^1(\Omega_T)} + 
\| \overline{\rho_i^n}\, \overline{( \min\{k, F(\sqrt{\rho_A^n\rho_B^n})\} - F(\sqrt{\rho_A^n\rho_B^n}) )}\|_{L^1(\Omega_T)}\\
\leq \frac{C}{\sqrt k},\quad i \in \{A,B\},\quad k\geq 1,
\end{multline*}
implying that for $i \in \{A,B\}$,
\begin{align}
	\label{wl.2}
	\overline{\rho_i^n F(\sqrt{\rho_A^n\rho_B^n})} = 
	\overline{\rho_i^n}\,
	\overline{F(\sqrt{\rho_A^n\rho_B^n})}, \quad \mbox{ a.e.~in }\Omega_T,
	\end{align} and for every such $F\in L^1_{loc}(\R_+,\R_+)$,
\begin{align} \label{wl.2.2}
	F'\in L^\infty(\R_+), \, F(x) \leq C(1+x),  \, 
		|F'(x)|\leq C|1-x| \text{ for } x\geq 0.
\end{align}
Let us now choose $F=F_\delta$ in \eqref{wl.2}-\eqref{wl.2.2}, where $0<\delta<1$ and
\begin{align*}
F_\delta(s) = \begin{cases}
0, & s\leq 1,\\
(s-1)^2, & 1<s\leq 1+\delta,\\
s + \delta^2 -1-\delta, & s>1+\delta.
\end{cases}
\end{align*}
Let us estimate (similar idea as before)
\begin{align*}
\|\overline{\rho_i^n(F_\delta(\sqrt{\rho_A^n\rho_B^n})-(\sqrt{\rho_A^n\rho_B^n}-1)_+)}\|_{L^1(\Omega_T)}^2
& \leq \sup_{n\in\NN}\|\rho_i^n\|_{L^2(\Omega_T)}^2
\|F_\delta(\sqrt{\rho_A^n\rho_B^n})-(\sqrt{\rho_A^n\rho_B^n}-1)_+\|^2_{L^2(\Omega_T)}\\
&\leq C\delta ,
\end{align*}
where the last step comes from the fact that 
$|F_\delta(s)-(s-1)_+|\leq C\delta$ for every $s\geq 0$. We can deduce that \eqref{wl.2}-\eqref{wl.2.2} holds with $F(s)=(s-1)_+$, that is
\begin{equation}
	\label{wl.3a}
\overline{\rho_i^n (\sqrt{\rho_A^n\rho_B^n}-1)_+} = 
	\overline{\rho_i^n}\,
	\overline{(\sqrt{\rho_A^n\rho_B^n}-1)_+},\quad i\in \{A,B\}\quad\mbox{a.e.~in }\Omega_T .
\end{equation}
In a similar way, by writing \eqref{wl.2}-\eqref{wl.2.2} with $F=G_\delta$, $0<\delta<1$,
\begin{align*}
G_\delta = \begin{cases}
s - \delta^2 - 1 + \delta, & 0\leq s \leq 1-\delta,\\
-(s-1)^2, & 1-\delta < s \leq 1,\\
0, & s>1.
\end{cases}
\end{align*}
One deduces that
\begin{equation}
	\label{wl.3b}
	\overline{\rho_i^n (\sqrt{\rho_A^n\rho_B^n}-1)_-} = 
	\overline{\rho_i^n}\,
	\overline{(\sqrt{\rho_A^n\rho_B^n}-1)_-},\quad i \in \{A,B\},\quad\mbox{a.e.~in }\Omega_T .
\end{equation}
Summing \eqref{wl.3a} and \eqref{wl.3b} allows us to conclude
\begin{equation}
	\label{wl.4}
	\overline{\rho_i^n \sqrt{\rho_A^n\rho_B^n}} = 
	\overline{\rho_i^n}\,
	\overline{\sqrt{\rho_A^n\rho_B^n}},\quad i\in \{A,B\},\quad\mbox{a.e.~in }\Omega_T .
\end{equation}
Let $k\in\NN$ arbitrary. Define the vector field 
$$
Z^n = (\min\{(\sqrt{\rho_A^n}+\sqrt{\rho_B^n})^2, k^2\},0,0,0),\quad \mbox{ for } n \in\NN .
$$
Clearly $Z^n$ is bounded in $L^\infty(\Omega_T)$, while 
$$
|\mbox{curl}_{(t,x)} Z^n|\leq C |\nabla [\min\{(\sqrt{\rho_A^n}+\sqrt{\rho_B^n})^2,k^2\}]|\leq C k
|\nabla(\sqrt{\rho_A^n}+\sqrt{\rho_B^n})| ,
$$
so thanks to \eqref{est.1a} $\mbox{curl}_{(t,x)} Z^n$ is bounded in $L^2(\Omega_T)$ and therefore relatively compact in $W^{-1,r}(\Omega_T)$
for some $r>1$. The Div-Curl Lemma allows us once again to deduce
\begin{align*}
\overline{U_i^n\cdot Z^n} = 
\overline{U_i^n}\cdot \overline{Z^n}\quad i \in \{A,B\},\quad\mbox{a.e.~in }\Omega_T ,
\end{align*}
which is equivalent to
\begin{align}\label{wl2.1}
\overline{\rho_i^n\min\{(\sqrt{\rho_A^n}+\sqrt{\rho_B^n})^2, k^2\}} = 
\overline{\rho_i^n}
\,\overline{\min\{(\sqrt{\rho_A^n}+\sqrt{\rho_B^n})^2, k^2\}}
\quad i \in \{A,B\},\quad\mbox{a.e.~in }\Omega_T .
\end{align}
Let us define $v^{n,k} = \sqrt{\rho_A^n}+\sqrt{\rho_B^n}
-\min\{\sqrt{\rho_A^n}+\sqrt{\rho_B^n}, k\}
= ( \sqrt{\rho_A^n}+\sqrt{\rho_B^n} - k )_+ $. 
For $t\in [0,T]$ and $k\geq 2$ let us estimate
\begin{align*}
\|v^{n,k}(t)\|_{L^2(\Omega)}^2 &= \int_\Omega \left(\sqrt{\rho_A^n(t)}+\sqrt{\rho_B^n(t)} - k \right)_+^2 \d x\\
& \leq \int_{\Omega\cap\{ \sqrt{\rho_A^n(t)}+\sqrt{\rho_B^n(t)} 
	> k \}  }
\left(\sqrt{\rho_A^n(t)}+\sqrt{\rho_B^n(t)}\right)^2 \d x\\
&\leq \frac{1}{\log k}\int_{\Omega\cap\{ \sqrt{\rho_A^n(t)}+\sqrt{\rho_B^n(t)} > k \}  }
\left(\sqrt{\rho_A^n(t)}+\sqrt{\rho_B^n(t)}\right)^2
\log\left(\sqrt{\rho_A^n(t)}+\sqrt{\rho_B^n(t)}\right) \d x\\
&\leq \frac{C}{\log k}\int_\Omega (\rho_A^n(t) + \rho_B^n(t))(1 + \log(\rho_A^n(t) + \rho_B^n(t)) ) \d x .
\end{align*} where the last inequality comes from the elementary property $(x+y)^2\leq 2(x^2 + y^2)$ for $x, y\in\R$, since
\begin{align*}
	(x+y)^2\log(x+y)\leq 2(x^2+y^2)\log(x+y) = (x^2+y^2)\log[(x+y)^2]
	\leq (x^2+y^2)\log[2(x^2+y^2)] .
\end{align*}
From \eqref{rhoL1Linf}, \eqref{est.L1logL} it follows
\begin{equation}
\label{est.v.1}
\|v^{n,k}\|_{L^\infty(0,T; L^2(\Omega))}\leq \frac{C}{\sqrt{\log k}},\quad n,k\in\NN,~~~ k\geq 2.
\end{equation}
From Gagliardo-Nirenberg inequality \eqref{GN} applied with $u=v^{n,k}$
it follows
\begin{align*}
\int_0^T\|v^{n,k}\|_{L^4(\Omega)}^4\mathrm{d}x \mathrm{d}t \leq 
C\left( \sup_{t\in [0,T]}\|v^{n,k}\|_{L^2(\Omega)}^2 \right)
\int_0^T\|v^{n,k}\|_{H^1(\Omega)}^2 \mathrm{d}x \mathrm{d}t ,
\end{align*}
which, thanks to \eqref{est.v.1}, leads to
\begin{align*}
\|v^{n,k}\|_{L^4(\Omega_T)}^4 &\leq \frac{C}{\log k}\|\sqrt{\rho_A^n}+\sqrt{\rho_B^n}\|_{L^2(0,T; H^1(\Omega))}^2 .
\end{align*}
Bound \eqref{est.na.sum} and the definition of $v^{n,k}$ allow us to deduce
\begin{align*}
\|\sqrt{\rho_A^n}+\sqrt{\rho_B^n}
-\min\{\sqrt{\rho_A^n}+\sqrt{\rho_B^n}, k\}\|_{L^4(\Omega_T)}\leq \frac{C}{(\log k)^{1/4}},\quad n,k\in\NN,\quad k\geq 2,
\end{align*}
which, together with Cauchy-Schwartz inequality
\begin{align*}
&\|(\sqrt{\rho_A^n}+\sqrt{\rho_B^n})^2
-(\min\{\sqrt{\rho_A^n}+\sqrt{\rho_B^n}, k\})^2\|_{L^2(\Omega_T)}\\
&\qquad\leq
\|\sqrt{\rho_A^n}+\sqrt{\rho_B^n}
-\min\{\sqrt{\rho_A^n}+\sqrt{\rho_B^n}, k\}\|_{L^4(\Omega_T)}
\|\sqrt{\rho_A^n}+\sqrt{\rho_B^n}
+\min\{\sqrt{\rho_A^n}+\sqrt{\rho_B^n}, k\}\|_{L^4(\Omega_T)}
\end{align*}
and \eqref{est.sum.L4}, allows us to conclude
\begin{align}\label{wl2.2}
&\|(\sqrt{\rho_A^n}+\sqrt{\rho_B^n})^2
-(\min\{\sqrt{\rho_A^n}+\sqrt{\rho_B^n}, k\})^2\|_{L^2(\Omega_T)}
\leq \frac{C}{(\log k)^{1/4}},\quad n,k\in\NN,\quad k\geq 2 .
\end{align}
Let $\mathcal{M}(\overline{\Omega_T}) = C(\overline{\Omega_T})'$ the space of Radon measures on $\overline{\Omega_T}$. Since $(\rho_A^n + \rho_B^n) (\sqrt{\rho_A^n}+\sqrt{\rho_B^n})^2$ is bounded in $L^1(\Omega_T)$ (thanks to \eqref{est.sum.L4}), then (up to subsequences) it is weak-* convergent in
$\mathcal{M}(\overline{\Omega_T})$. Since \eqref{wl2.1} holds, we can write
\begin{multline}  \label{wl2.est.1}
\| \overline{(\rho_A^n + \rho_B^n) (\sqrt{\rho_A^n}+\sqrt{\rho_B^n})^2}
- \overline{(\rho_A^n + \rho_B^n)}\,
\overline{(\sqrt{\rho_A^n}+\sqrt{\rho_B^n})^2}
 \|_{\mathcal{M}(\overline{\Omega_T})}\\
\leq 
\| \overline{(\rho_A^n + \rho_B^n)[  (\sqrt{\rho_A^n}+\sqrt{\rho_B^n})^2
	-(\min\{\sqrt{\rho_A^n}+\sqrt{\rho_B^n}, k\})^2]}\|_{\mathcal{M}(\overline{\Omega_T})} \\
 \qquad + \| \overline{(\rho_A^n + \rho_B^n)}\,\overline{[  (\sqrt{\rho_A^n}+\sqrt{\rho_B^n})^2
	-(\min\{\sqrt{\rho_A^n}+\sqrt{\rho_B^n}, k\})^2]}\|_{\mathcal{M}(\overline{\Omega_T})} =: J_1 + J_2 .
\end{multline}
Let us bound the terms $J_1$, $J_2$. Since the norm in $\mathcal M(\overline{\Omega_T})$ is weak-* lower semicontinuous, it follows
\begin{align*}
J_1 &\leq \liminf_{n\to\infty} 
\| (\rho_A^n + \rho_B^n)[  (\sqrt{\rho_A^n}+\sqrt{\rho_B^n})^2
	-(\min\{\sqrt{\rho_A^n}+\sqrt{\rho_B^n}, k\})^2]\|_{\mathcal{M}(\overline{\Omega_T})}\\
	&\leq\sup_{n\in\NN}
	\| (\rho_A^n + \rho_B^n)[  (\sqrt{\rho_A^n}+\sqrt{\rho_B^n})^2
	-(\min\{\sqrt{\rho_A^n}+\sqrt{\rho_B^n}, k\})^2]\|_{\mathcal{M}(\overline{\Omega_T})}.
\end{align*}
Since $L^1(\Omega_T)\hookrightarrow \mathcal{M}(\overline{\Omega_T})$,
it follows
\begin{align*}
J_1 &\leq C\sup_{n\in\NN}
\| (\rho_A^n + \rho_B^n)[(\sqrt{\rho_A^n}+\sqrt{\rho_B^n})^2
-(\min\{\sqrt{\rho_A^n}+\sqrt{\rho_B^n}, k\})^2]\|_{L^1(\Omega_T)}\\
&\leq C\sup_{n\in\NN}\|\rho_A^n + \rho_B^n\|_{L^2(\Omega_T)}
\|(\sqrt{\rho_A^n}+\sqrt{\rho_B^n})^2
-(\min\{\sqrt{\rho_A^n}+\sqrt{\rho_B^n}, k\})^2\|_{L^2(\Omega_T)} .
\end{align*}
Since \eqref{est.sum.L4}, \eqref{wl2.2} hold, we obtain
\begin{align*}
J_1 \leq \frac{C}{(\log k)^{1/4}},\quad n,k\in\NN,\quad k\geq 2 .
\end{align*}
In a similar way one can show
\begin{align*}
J_2 \leq \frac{C}{(\log k)^{1/4}},\quad n,k\in\NN,\quad k\geq 2 .
\end{align*}
From the previous two bounds and \eqref{wl2.est.1} one concludes
\begin{align*}
\| \overline{(\rho_A^n + \rho_B^n) (\sqrt{\rho_A^n}+\sqrt{\rho_B^n})^2}
- \overline{(\rho_A^n + \rho_B^n)}\,
\overline{(\sqrt{\rho_A^n}+\sqrt{\rho_B^n})^2}
\|_{\mathcal{M}(\overline{\Omega_T})}\leq \frac{C}{(\log k)^{1/4}},\quad k\geq 2,
\end{align*}
which means
\begin{align*}
\overline{(\rho_A^n + \rho_B^n) (\sqrt{\rho_A^n}+\sqrt{\rho_B^n})^2}
= \overline{(\rho_A^n + \rho_B^n)}\,
\overline{ (\sqrt{\rho_A^n}+\sqrt{\rho_B^n})^2}\quad
\mbox{in }\mathcal{M}(\overline{\Omega_T}).
\end{align*}
For every $f\in \mathcal{M}(\overline{\Omega_T})$, $\phi\in C(\overline{\Omega_T})$, let $\langle f, \phi\rangle$ be the dual product between $f$, $\phi$ (i.e.~$\langle f, \phi\rangle$ is the result of the application of the linear, bounded functional $f$ to $\phi$). It follows
\begin{align} \label{wl2.lim1}
\begin{split}
\lim_{n\to\infty}  \int_{\Omega_T}(\rho_A^n + \rho_B^n) (\sqrt{\rho_A^n}+\sqrt{\rho_B^n})^2 \mathrm{d}x \mathrm{d}t &= 
\langle \overline{(\rho_A^n + \rho_B^n) (\sqrt{\rho_A^n}+\sqrt{\rho_B^n})^2} , 1 \rangle\\
&=\int_{\Omega_T}\overline{(\rho_A^n + \rho_B^n)}\,
\overline{(\sqrt{\rho_A^n}+\sqrt{\rho_B^n})^2} \d x \d t.
\end{split}
\end{align}
On the other hand, summing \eqref{wl.4} in $i \in \{A,B\}$, multiplying it with $2$ and integrating it in $\Omega_T$ lead to
\begin{equation}
\label{wl2.lim2}
\lim_{n\to\infty} \int_{\Omega_T}(\rho_A^n + \rho_B^n) 
2\sqrt{\rho_A^n\rho_B^n} \, \mathrm{d}x \mathrm{d}t = \int_{\Omega_T}
\overline{(\rho_A^n+\rho_B^n)}\,
\overline{2\sqrt{\rho_A^n\rho_B^n}} \, \mathrm{d}x \mathrm{d} t .
\end{equation}
Taking the difference between \eqref{wl2.lim1} and \eqref{wl2.lim2} yields
\begin{equation*}
\lim_{n\to\infty} \int_{\Omega_T}(\rho_A^n + \rho_B^n)^2 \, \mathrm{d}x \mathrm{d}t = 
\int_{\Omega_T}\left(\,\overline{\rho_A^n + \rho_B^n}\,\right)^2 \, \mathrm{d}x \mathrm{d}t ,
\end{equation*}
which means (thanks to \cite[Thr.~10.20]{FN17} ) that $\rho_A^n + \rho_B^n$ is strongly convergent in $L^2(\Omega_T)$.\medskip\\
{\bf Step 2: strong convergence of $\rho_A^n\rho_B^n$.}
Let
\begin{align*}
\phi(s) = \begin{cases}
e^{1 + 1/(s^2 - 1)} & 0\leq s < 1\\
0 & s\geq 1
\end{cases} .
\end{align*}
For every $r>0$, $u\in \R^2_+$, let us define the function
\begin{align*}
f_{(r,u)}(\rho) = \phi\left( \frac{|\rho - u|}{r} \right),\quad\rho\in\R^2 .
\end{align*}
We define also
\begin{align*}
\Gamma_{cr} &= \{ (\rho_1,\rho_2)\in\R^2_+ \, \mid \, \rho_1\rho_2 = 1\},\\
\mathscr{F} &= \left\{ 
f_{(r,u)} \, \mid \,  r\in\Q\cap (0,\infty),~ u\in (\Q\cap [0,\infty))^2 , ~
\overline{B_r(u)}\cap \Gamma_{cr} = \emptyset
 \right\} .
\end{align*}
We point out that $\mathscr F$ is a countable family of $C^\infty_c(\R^2)$ functions whose gradient vanishes in a neighbourhood of $\Gamma_{cr}$. This fact, together with \eqref{est.1b}, easily imply that
\begin{equation}
\label{fru.L2H1}
\|f_{(r,u)}(\rho^n)\|_{L^2(0,T; H^1(\Omega))}\leq C(r,u,T) .
\end{equation}
Moreover,
\begin{align*}
\frac{1}{2}&\pa_t f_{(r,u)}(\rho^n) = 
\frac{1}{2}\frac{\pa f_{(r,u)}(\rho^n)}{\pa\rho_A}\pa_t\rho_A^n + 
\frac{1}{2}\frac{\pa f_{(r,u)}(\rho^n)}{\pa\rho_B}\pa_t\rho_B^n\\
&= \dv{
\frac{\pa f_{(r,u)}(\rho^n)}{\pa\rho_A}\sqrt{\rho_A^n}(
\nabla\sqrt{\rho_A^n} + \sqrt{\rho_A^n\rho_B^n}\nabla\sqrt{\rho_B^n}) + 
\frac{\pa f_{(r,u)}(\rho^n)}{\pa\rho_B}\sqrt{\rho_B^n}(
\nabla\sqrt{\rho_B^n} + \sqrt{\rho_A^n\rho_B^n}\nabla\sqrt{\rho_A^n})}\\
&-\nabla\frac{\pa f_{(r,u)}(\rho^n)}{\pa\rho_A}\cdot
\sqrt{\rho_A^n}(
\nabla\sqrt{\rho_A^n} + \sqrt{\rho_A^n\rho_B^n}\nabla\sqrt{\rho_B^n}) 
-\nabla\frac{\pa f_{(r,u)}(\rho^n)}{\pa\rho_B}\cdot
\sqrt{\rho_B^n}(
\nabla\sqrt{\rho_B^n} + \sqrt{\rho_A^n\rho_B^n}\nabla\sqrt{\rho_A^n})\\
&=: \dv{\mathcal{J}_{(r,u)}^n} + \Xi_{(r,u)}^n .
\end{align*}
Once again, the assumptions on $f_{(r,u)}$ as well as \eqref{est.1b}
imply that 
\begin{align*}
\|\mathcal{J}_{(r,u)}^n\|_{L^2(\Omega_T)} + \|\Xi_{(r,u)}^n\|_{L^1(\Omega_T)}\leq C(r,u,T) ,
\end{align*}
which leads to
\begin{align}
\label{frudt}
\|\pa_t f_{(r,u)}(\rho^n)\|_{L^1(0,T; W^{-1,1}(\Omega))}\leq C(r,u,T).
\end{align}
We are therefore allowed to apply Aubin-Lions Lemma (and the uniform $L^\infty(\Omega_T)$ bound for $f_{(r,u)}(\rho^n)$) to deduce the strong convergence of $f_{(r,u)}(\rho^n)$ in $L^q(\Omega_T)$ for every $q<\infty$. In particular,
\begin{align*}
\text{ For all } f_{(r,u)}\in\mathscr F ,  \text{ there exists } (\rho^{n_k(r,u)})_{k\in\NN}\subset (\rho^n)_{n\in\NN}  \text{ such that }
f_{(r,u)}(\rho^{n_k(r,u)})\mbox{ is a.e.~convergent in }\Omega_T .
\end{align*}
However, since $\mathscr F$ is countable, a Cantor diagonal argument allows us to find a subsequence (not relabeled) of $\rho^n$ such that
\begin{align*}
\text{ For all } f_{(r,u)}\in\mathscr F ,  \quad
f_{(r,u)}(\rho^{n})\to\xi_{(r,u)}\quad\mbox{ a.e.~in }\Omega_T .
\end{align*}
Let $(x,t)\in \Omega_T$ be a point where such convergence holds true.
Since (from Step 1) $\rho_A^n + \rho_B^n$ is (up to subsequences) strongly convergent in $L^2(\Omega_T)$, we can assume w.l.o.g.~that
$\rho^n(x,t)$ is bounded in $\R^2$.
There are two cases.\medskip\\
{\em \textbf{Case 1}: There exists $f_{(r_1,u_1)}\in\mathscr F$ such that  $\xi_{(r_1,u_1)}(x,t)>0$.} \\
Since $f_{(r,u)}(\rho) = \phi(|u-\rho|/r)$, and $\phi\mid_{[0,1]}$ is one-to-one and continuous, then $|\rho^n(x,t) - u_1|\to d_1\in [0,r)$.
By continuity there exist $f_{(r_2,u_2)}$, $f_{(r_3,u_3)}\in\mathscr F$ such that $u_1$, $u_2$, $u_3$ are not aligned and $\xi_{(r_2,u_2)}(x,t)$, $\xi_{(r_3,u_3)}(x,t)>0$. Since $f_{(r,u)}(\rho)$ is a one-to-one function of $|\rho-u|$ for $|\rho-u|\leq 1$, the limits 
$d_i = \lim_{n\to\infty}|\rho^n(x,t)-u_i|$, $i=1,2,3$, exist finite.
As a consequence, each accumulation point of $\rho^n(x,t)$ will fall at the intersection of three circles ($\pa B_{d_i}(u_i)$, $i=1,2,3$) with mutually not aligned centers, which can only consist of at most one point. This means that all accumulation points concide with this point, i.e.~the sequence $\rho^n(x,t)$ is convergent to that point. A fortiori $\rho_A^n(x,t)\rho_B^n(x,t)$ is also convergent.\medskip\\
{\em \textbf{Case 2}: For all  $f_{(r,u)}\in\mathscr F$ it holds $\xi_{(r,u)}(x,t)=0$.} 
\\Consider a generic subsequence $\rho^{n_m}(x,t)$ of $\rho^n(x,t)$.
Since it is bounded, is has a sub-subsequence $\rho^{n_{m_k}}(x,t)$ that is convergent to some limit $\ell\in\R^2_+$. However, since $f_{(r,u)}(\ell)=0$ for every $f_{(r,u)}\in\mathscr F$, the only possibility is that $\ell\in\Gamma_{cr}$. In particular
$\lim_{k\to\infty}\rho_A^{n_{m_k}}\rho_B^{n_{m_k}}(x,t) = 1$. 
The subsequence $\rho^{n_m}(x,t)$ being abitrary, this means that
$\rho_A^{n}\rho_B^{n}(x,t)\to 1$ as $n\to\infty$.\medskip\\
Summarizing up, we have proved that $\rho_A^n\rho_B^n$ is a.e.~convergent in $\Omega_T$. Bound \eqref{est.Hm1.b} implies that
$\rho_A^n\rho_B^n$ is strongly convergent in $L^{3/2-\eta}(\Omega_T)$
for every $\eta\in (0,1/2]$. Furthermore, we have also showed that 
$\rho_A^n\to\rho_A$, $\rho_B^n\to\rho_B$ a.e.~in $E$, where
$$ E = \left\{ (x,t)\in\Omega_T \, \mid \, \lim_{n\to\infty}\rho_A^n(x,t)\rho_B^n(x,t)\neq 1 \right\} . $$
This also implies (together with \eqref{est.sum.L4}) that 
$\rho_A^n\to\rho_A$, $\rho_B^n\to\rho_B$ strongly in $L^{2-\eta}(E)$, for every $\eta\in (0,1]$. We also point out that 
$|\sqrt{\rho_A^n} \pm \sqrt{\rho_B^n}| = \sqrt{\rho_A^n + \rho_B^n \pm 2\sqrt{\rho_A^n\rho_B^n}}$ is a.e.~convergent in $\Omega_T$, and so
is $|\rho_A^n - \rho_B^n| = |\sqrt{\rho_A^n} + \sqrt{\rho_B^n}||\sqrt{\rho_A^n} - \sqrt{\rho_B^n}|$.
\medskip\\
{\bf Step 3: strong convergence of $\rho_A^n$, $\rho_B^n$.}
Now we must prove that $\rho_A^n$, $\rho_B^n$ are a.e.~convergent also in $E^c=\big\{ \lim_{n\to\infty}\rho_A^n\rho_B^n = 1\big\}$. To this aim, let $\psi\in C^2(\R)$ be a cutoff such that 
\begin{equation}
\label{def.psi}
\psi(s) = 
\begin{cases}
0 & s<\frac{1}{3}, \\
1 & s>\frac{2}{3},\\
\mbox{nondecreasing} & \frac{1}{3}\leq s\leq\frac{2}{3}.
\end{cases}
\end{equation}
Moreover define 
\begin{align}\label{def.g}
g(s) = \frac{1}{1+s^4},\qquad s\geq 1,
\end{align}
\begin{align}
\label{def.fA}
f_A(\rho^n) &= \psi(\sqrt{\rho_A^n\rho_B^n})
g(\sqrt{\rho_A^n}+\sqrt{\rho_B^n})(\log\rho_A^n + \rho_B^n),\\
\label{def.fB}
f_B(\rho^n) &=\psi(\sqrt{\rho_A^n\rho_B^n})
g(\sqrt{\rho_A^n}+\sqrt{\rho_B^n})(\log\rho_B^n + \rho_A^n).
\end{align}
Since $\psi(s)s^{-\alpha}$ is bounded for every $\alpha\geq 0$
and $|\log s|\leq C(s^{-1/8} + s)$, it holds
\begin{align}\label{fA.Linf}
\begin{split}
|f_A(\rho^n)| &\leq \frac{\psi(\sqrt{\rho_A^n\rho_B^n})
	g(\sqrt{\rho_A^n}+\sqrt{\rho_B^n})}{(\rho^n_A\rho_B^n)^{1/4}}
((\rho_B^n)^{1/4}(\rho^n_A)^{1/4}|\log\rho_A^n| + (\rho^n_A)^{1/4} (\rho_B^n)^{5/4})\\
&\leq C\frac{(\rho_B^n)^{1/4}[ (\rho_A^n)^{1/8} + (\rho_A^n)^{5/4} ] + (\rho^n_A)^{1/4} (\rho_B^n)^{5/4}}{1+(\rho_A^n)^2 + (\rho_B^n)^2} \\
&\leq C,
\end{split}
\end{align}
which means that $f_A(\rho^n)$ is bounded in $L^{\infty}(\OmT)$.
Similarly one shows the same bound for $f_B(\rho^n)$.
Let us now consider
\[\na f_A(\rho^n) =  I_1 + I_2 + I_3, \]
where 
\begin{align*} 
I_1 := \psi(\sqrt{\rho_A^n\rho_B^n})
g(\sqrt{\rho_A^n}+\sqrt{\rho_B^n})\nabla (\log\rho_A^n + \rho_B^n) \\
I_2 := \psi(\sqrt{\rho_A^n\rho_B^n})
(\log\rho_A^n + \rho_B^n)\nabla g(\sqrt{\rho_A^n}+\sqrt{\rho_B^n}), \\
I_3 := g(\sqrt{\rho_A^n}+\sqrt{\rho_B^n})(\log\rho_A^n + \rho_B^n)\nabla  \psi(\sqrt{\rho_A^n\rho_B^n}).
\end{align*}
First we give an estimate to $I_1$. From \eqref{ei.A.int} it follows
\begin{align*}
\|I_1\|_{L^{2}(\Omega_T)} &= 2
\left\|
\frac{\psi(\sqrt{\rho_A^n\rho_B^n})}{\sqrt{\rho_A^n\rho_B^n}}
g(\sqrt{\rho_A^n}+\sqrt{\rho_B^n})\sqrt{\rho_B^n}
( \nabla\sqrt{\rho_A^n} + \sqrt{\rho_A^n\rho_B^n}\nabla\sqrt{\rho_B^n} )
\right\|_{L^{2}(\Omega_T)}\\
&\leq
 2
 \left\|\frac{\psi(\sqrt{\rho_A^n\rho_B^n})}{\sqrt{\rho_A^n\rho_B^n}} \right\|_{L^\infty(\Omega_T)}
 \|g(\sqrt{\rho_A^n}+\sqrt{\rho_B^n})\sqrt{\rho_B^n}
 \|_{L^\infty(\Omega_T)}
\|\nabla\sqrt{\rho_A^n} + \sqrt{\rho_A^n\rho_B^n}\nabla\sqrt{\rho_B^n}\|_{L^2(\Omega_T)}\\
&\leq C_T .
\end{align*}
Now, we consider $I_2$. Since $g'/g$ is bounded in $\R_+$ while 
$f_A(\rho^n)$ is bounded in $L^\infty(\Omega_T)$,
\begin{align*}
&\|I_2\|_{L^2(\Omega_T)} = 
\left\| f_A(\rho^n)\frac{g'(\sqrt{\rho_A^n}+\sqrt{\rho_B^n})}{
	g(\sqrt{\rho_A^n}+\sqrt{\rho_B^n})}\nabla (\sqrt{\rho_A^n}+\sqrt{\rho_B^n})\right\|_{L^2(\Omega)}\leq
C \|\nabla(\sqrt{\rho_A^n}+\sqrt{\rho_B^n})\|_{L^2(\Omega_T)}\leq C_T
\end{align*}
where the last inequality comes from \eqref{est.1a}. \\
Finally, let us consider $I_3$. Since $|\psi'(s)|\leq C s^{1/4}|1-s|$ for $s\geq 0$ one obtains
\begin{multline*}
\| I_3\|_{L^2(\Omega_T)} = \left\|
g(\sqrt{\rho_A^n}+\sqrt{\rho_B^n})(\log\rho_A^n + \rho_B^n) \psi'(\sqrt{\rho_A^n\rho_B^n})\nabla (\sqrt{\rho_A^n\rho_B^n})
\right\|_{L^{2}(\Omega_T)}\\
\leq \left\|
g(\sqrt{\rho_A^n}+\sqrt{\rho_B^n})(\log\rho_A^n + \rho_B^n) (\sqrt{\rho_A^n\rho_B^n})^{1/4}|1-\sqrt{\rho_A^n\rho_B^n}|
(\sqrt{\rho_A^n}|\nabla\sqrt{\rho_B^n}| + \sqrt{\rho_B^n}|\nabla\sqrt{\rho_A^n}| )\right\|_{L^{2}(\Omega_T)}\\
\leq \left\|
g(\sqrt{\rho_A^n}+\sqrt{\rho_B^n})(\sqrt{\rho_A^n}+\sqrt{\rho_B^n})
(\log\rho_A^n + \rho_B^n) (\rho_A^n\rho_B^n)^{1/8}
\right\|_{L^{\infty}(\Omega_T)} \\ \times\left\|
(1-\sqrt{\rho_A^n\rho_B^n})(|\nabla\sqrt{\rho_A^n}| + |\nabla\sqrt{\rho_B^n}|)
\right\|_{L^{2}(\Omega_T)}  \leq C_T ,
\end{multline*}
where we used \eqref{est.1b}, \eqref{est.sum.L4}. We conclude that
$\na f_A(\rho^n)$ is bounded in $L^2(\Omega_T)$.
Similarly, one can show that $\nabla f_B(\rho^n)$ is bounded in $L^{2}(\OmT)$, too. This means that the vector fields
\begin{align*}
Y_i^n = (f_i(\rho^n),0,0,0),\quad i \in \{A,B\},
\end{align*}
are bounded in $L^\infty(\Omega)$ and the antisymmetric part of their Jacobian is bounded in $L^2(\Omega_T)$, thus  relatively compact in $W^{-1,r}(\Omega_T)$ for some $r>1$.
Once again, the Div-Curl Lemma (applied to $U^n_A - U^n_B$ defined in \eqref{Un} and $Y^n_A - Y^n_B$) lead to
$$
\overline{(\rho_A^n-\rho_B^n)(f_A(\rho^n)-f_B(\rho^n))} = 
\overline{(\rho_A^n-\rho_B^n)}\,
\overline{(f_A(\rho^n)-f_B(\rho^n))}\quad\mbox{a.e.~in }\Omega_T,
$$
which means
\begin{align}\label{rlr}
\overline{ \frac{(\rho_A^n-\rho_B^n)\psi(\rho_A^n\rho_B^n)}{1+(\sqrt{\rho_A^n} + \sqrt{\rho_B^n})^4} 
\left( 
\log\frac{\rho_A^n}{\rho_B^n} + \rho_B^n - \rho_A^n\right)} = 
\overline{(\rho_A^n-\rho_B^n)}\,
\overline{ \frac{\psi(\rho_A^n\rho_B^n)}{1+(\sqrt{\rho_A^n} + \sqrt{\rho_B^n})^4} \left( 
	\log\frac{\rho_A^n}{\rho_B^n} + \rho_B^n - \rho_A^n\right)} .
\end{align}
Let 
\begin{align*}
\mu^n &= \min\{\rho_A^n,\rho_B^n\} = \frac{1}{2}(\rho_A^n+\rho_B^n) - \frac{1}{2}|\rho_A^n-\rho_B^n|,\\
M^n &= \max\{\rho_A^n,\rho_B^n\} = \frac{1}{2}(\rho_A^n+\rho_B^n) + \frac{1}{2}|\rho_A^n-\rho_B^n|,
\end{align*} and 
\begin{align*}
\sigma^n &= \begin{cases}
1, & \rho_A^n > \rho_B^n, \\
0, & \rho_A^n = \rho_B^n,\\
-1, & \rho_A^n < \rho_B^n.
\end{cases}
\end{align*}
Since both $\rho_A^n+\rho_B^n$ and $|\rho_A^n-\rho_B^n|$ are a.e.~convergent in $\OmT$, then also $\mu^n$, $M^n$ are a.e.~convergent in $\OmT$ (and strongly convergent in $L^{2-\eta}(\OmT)$ for every $\eta>0$) towards some nonnegative functions $\mu$, $M$ (respectively). On the other hand $|\sigma^n|\leq 1$ a.e.~in $\OmT$ and so $\sigma^n\rightharpoonup^*\sigma$ weakly* in $L^\infty(\OmT)$.

Let us point out that 
$$
(\rho_A^n-\rho_B^n)\left( 
\log\frac{\rho_A^n}{\rho_B^n} + \rho_B^n - \rho_A^n\right) = 
(M^n-\mu^n)\left( 
\log\frac{M^n}{\mu^n} + \mu^n - M^n\right)
$$
which implies that $(\rho_A^n-\rho_B^n)\left( 
\log\frac{\rho_A^n}{\rho_B^n} + \rho_B^n - \rho_A^n\right)$ is a.e.~convergent on $\{\mu>0\}$. On the other hand, since $\psi(\rho_A^n\rho_B^n)>0$ only on $\{\rho_A^n\rho_B^n\geq 1/3\} = \{ 1/\mu^n\leq 3 M^n \}$, it follows (via Lagrange's theorem) that for a suitable point $\lambda^n \in [\mu^n,M^n]$
\begin{align*}
\psi(\sqrt{M^n\mu^n})(M^n-\mu^n)\log\frac{M^n}{\mu^n} =
\psi(\sqrt{M^n\mu^n}) \frac{(M^n-\mu^n)^2}{\lambda^n}\leq 
\psi(\sqrt{M^n\mu^n}) \frac{(M^n-\mu^n)^2}{\mu^n}\leq C |M^n|^3 .
\end{align*}
Since $M^n$, $\mu^n$ are bounded in $L^2(\OmT)$, it follows (by dominated convergence) that 
\begin{multline*}
\frac{(\rho_A^n-\rho_B^n)\psi(\rho_A^n\rho_B^n)}{1+(\sqrt{\rho_A^n} + \sqrt{\rho_B^n})^4} \left( 
\log\frac{\rho_A^n}{\rho_B^n} + \rho_B^n - \rho_A^n\right) \\ \to
\frac{(M-\mu)\psi(M\mu)}{1+(\sqrt{M} + \sqrt{\mu})^4}\left( 
\log\frac{M}{\mu} + \mu - M\right) \quad 
\mbox{ strongly in }L^1(\Omega_T).
\end{multline*}
A similar argument allows us to obtain
\begin{multline*}
\frac{\psi(M^n\mu^n)}{1+(\sqrt{M^n} + \sqrt{\mu^n})^4}\left( 
\log\frac{M^n}{\mu^n} + \mu^n - M^n\right) \\ \to
\frac{\psi(M\mu)}{1+(\sqrt{M} + \sqrt{\mu})^4}\left( 
\log\frac{M}{\mu} + \mu - M\right) \quad 
\mbox{ strongly in }L^1(\Omega_T).
\end{multline*}
As a consequence,
\begin{align*}
\rho_A^n-\rho_B^n = \sigma^n(M^n - \mu^n)\rightharpoonup 
\sigma(M - \mu) \quad \mbox{ weakly in }L^1(\OmT),
\end{align*}
\begin{align*}
\frac{\psi(\rho_A^n\rho_B^n)}{1+(\sqrt{\rho_A^n} + \sqrt{\rho_B^n})^4} &\left( 
\log\frac{\rho_A^n}{\rho_B^n} + \rho_B^n - \rho_A^n\right) = \sigma^n\frac{\psi(M^n\mu^n)}{1+(\sqrt{M^n} + \sqrt{\mu^n})^4}\left( 
\log\frac{M^n}{\mu^n} + \mu^n - M^n\right) \\ &\rightharpoonup 
\sigma\frac{\psi(M\mu)}{1+(\sqrt{M} + \sqrt{\mu})^4}\left( 
\log\frac{M}{\mu} + \mu - M\right) \quad 
\mbox{ weakly in }L^1(\OmT).
\end{align*}
From the above relations and \eqref{rlr} 
as well as the fact that
$$
M\mu = 1,\quad \log\frac{M}{\mu} + \mu - M = 0 \quad
\Leftrightarrow\quad M=\mu=1,
$$
we deduce that
$|\sigma|^2 = 1$ in $\{M>\mu ,~ M\mu=1\}$. However, since $|\sigma^n|\leq 1$ a.e.~in $\Omega_T$, $n\in\NN$, this means that 
$$
\overline{(\sigma^n)^2}\leq 1 = |\sigma|^2 = 
( \overline{\sigma^n} )^2\quad \mbox{a.e.~in }\{M>\mu ,~ M\mu=1\}.
$$
However, being $x\mapsto x^2$ strictly convex, it follows from \cite[Thr.~10.20]{FN17} that $\sigma^n\to\sigma$ a.e.~in $\{M>\mu ,~ M\mu=1\}$.
This allows us to deduce that $\rho_A^n-\rho_B^n = \sigma^n(M^n - \mu^n)$ is a.e.~convergent in $\{M\mu=1\} = E^c$, and so are
$\rho_A^n = \frac{1}{2}(\rho_A^n + \rho_B^n) 
+ \frac{1}{2}(\rho_A^n - \rho_B^n)$ and 
$\rho_B^n = \frac{1}{2}(\rho_A^n + \rho_B^n) 
- \frac{1}{2}(\rho_A^n - \rho_B^n)$ (because we already know that $\rho_A^n + \rho_B^n$ is a.e.~convergent in $\Omega_T$). Since we already knew that $\rho_A^n$, $\rho_B^n$ are a.e.~convergent in $E$, we conclude that 
$\rho_A^n$, $\rho_B^n$ are a.e.~convergent in $\Omega_T$ and therefore
by dominated convergence (and \eqref{est.sum.L4}) $\rho_A^n$, $\rho_B^n$ are also strongly convergent in $L^{2-\delta}(\Omega_T)$
for every $\delta\in (0,1]$.\medskip\\
{\bf Step 4: limit in the equations.}
Now we show that \eqref{w.A}, \eqref{w.B} hold for the limit functions $\rho_A$, $\rho_B$. We first study the convergence of the expressions
	$$
	\zeta_A^n = \na\sqrt{\rho_A^n} + \sqrt{\rho_A^n\rho_B^n}\na\sqrt{\rho_B^n}, \quad \text{ and } \quad 
	\zeta_B^n = \na\sqrt{\rho_B^n} + \sqrt{\rho_A^n\rho_B^n}\na\sqrt{\rho_A^n}.
	$$
	Since $\zeta_A^n$, $\zeta_B^n$ are bounded in $L^2(\Omega_T)$, it holds (up to subsequences)
	\begin{align*}
	\zeta_A^n\rightharpoonup \zeta_A,\quad \zeta_B^n\rightharpoonup \zeta_B\quad 
	\mbox{weakly in }L^2(\Omega_T).
	\end{align*}
	We want to show that \eqref{zeta.1}-\eqref{zeta.2} hold.
	Let us consider
	\begin{align*}
	\zeta_A^n + \zeta_B^n &=
		(\sqrt{\rho_A^n\rho_B^n}+1)\na(\sqrt{\rho_A^n}+\sqrt{\rho_B^n}).
	\end{align*}
	We know that $\na(\sqrt{\rho_A^n}+\sqrt{\rho_B^n})$ is bounded in $L^2(\OmT)$ and that $\sqrt{\rho_A^n}+\sqrt{\rho_B^n}\to \sqrt{\rho_A} + \sqrt{\rho_B}$ strongly in $L^2(\OmT)$ (given that $\rho_A^n, \rho_B^n$ are strongly convergent in $L^{2-\eta}(\Omega_T)$ for $\eta \in (0,1]$, as showed in Step 3), therefore
	$$
	\na(\sqrt{\rho_A^n}+\sqrt{\rho_B^n}) \rightharpoonup \na(\sqrt{\rho_A}+\sqrt{\rho_B})\quad \mbox{ weakly in }L^2(\Omega_T).
	$$
	Moreover, thanks to \eqref{est.1a}, $J_A^n + J_B^n$ is bounded in $L^2(\Omega_T)$, while $\sqrt{\rho_A^n\rho_B^n}\to\sqrt{\rho_A\rho_B}$ strongly in $L^{3-\delta}(\Omega_T)$ for every $\delta\in (0,2]$ thanks
	to the property $\rho_A^n\to\rho_A$, $\rho_B^n\to\rho_B$
	a.e.~in $\Omega_T$ (proved in Step 3) and \eqref{est.Hm1.b}. Therefore
	\begin{align*}
	\zeta_A^n + \zeta_B^n &\rightharpoonup
	(\sqrt{\rho_A\rho_B}+1)\na(\sqrt{\rho_A}+\sqrt{\rho_B}) \quad 
	\mbox{ weakly in }L^{2}(\Omega_T).
	\end{align*}
Therefore, \eqref{zeta.1} holds.
Let us now turn our attention to
$$
\zeta_A^n - \zeta_B^n = (1-\sqrt{\rho_A^n\rho_B^n})\na (\sqrt{\rho_A^n} - \sqrt{\rho_B^n}) .
$$
Let us consider a generic function $\Psi\in Y$ (defined in \eqref{class.Y}). By construction and \eqref{est.na.p} it follows that $\Psi(\sqrt{\rho_A^n\rho_B^n})(1-\sqrt{\rho_A^n\rho_B^n})$ is bounded in $L^{4/3}(0,T; W^{1,4/3}(\Omega))$.
 It holds
	\begin{align*}
	\Psi(\sqrt{\rho_A^n\rho_B^n})(\zeta_A^n - \zeta_B^n) 
	&= \na[ \Psi(\sqrt{\rho_A^n\rho_B^n})
	(1-\sqrt{\rho_A^n\rho_B^n})(\sqrt{\rho_A^n}-\sqrt{\rho_B^n}) ]\\
	&\quad - (\sqrt{\rho_A^n}-\sqrt{\rho_B^n})\na [
	(1-\sqrt{\rho_A^n\rho_B^n})\Psi(\sqrt{\rho_A^n\rho_B^n})] .
	\end{align*}
However, due to \eqref{est.sum.L4}, \eqref{est.Hm1.b} and the strong convergence of $\rho^n$ in $L^1(\Omega_T)$ we have
\begin{align*}
\Psi(\sqrt{\rho_A^n\rho_B^n})
(1-\sqrt{\rho_A^n\rho_B^n})(\sqrt{\rho_A^n}-\sqrt{\rho_B^n})
\rightharpoonup
\Psi(\sqrt{\rho_A\rho_B})
(1-\sqrt{\rho_A\rho_B})(\sqrt{\rho_A}-\sqrt{\rho_B}) \quad 
\mbox { weakly in }L^{4/3}(\Omega_T),
\end{align*}
while, since  $\Psi(\sqrt{\rho_A^n\rho_B^n})(1-\sqrt{\rho_A^n\rho_B^n})$ is bounded in $L^{4/3}(0,T; W^{1,4/3}(\Omega))$, it holds
\begin{align*}
	\na [(1-\sqrt{\rho_A^n\rho_B^n})\Psi(\sqrt{\rho_A^n\rho_B^n})]
	\rightharpoonup
	\na [(1-\sqrt{\rho_A\rho_B})\Psi(\sqrt{\rho_A\rho_B})] \quad 
	\mbox{ weakly in }L^{4/3}(\Omega_T),
\end{align*}
% thanks to \eqref{reg.3} and the assumption $\Psi(u_A,u_B)(1-u_A u_B)\in X$ we have
%\begin{align*}
%\na [(1-\sqrt{\rho_A^n\rho_B^n})\Psi(\sqrt{\rho_A^n\rho_B^n})]
%\rightharpoonup
%\na [(1-\sqrt{\rho_A\rho_B})\Psi(\sqrt{\rho_A\rho_B})]
%\mbox{ weakly in }L^{2}(\Omega_T),
%\end{align*}
so it follows that, for every $\phi\in C^1_c(\Omega_T;\R^2)$,
\begin{align*}
	\int_{\Omega_T} & \Psi(\sqrt{\rho_A}, \sqrt{\rho_B}) (\zeta_A - \zeta_B)\cdot\phi \, \mathrm{d}x \mathrm{d}t\\
	&= \lim_{n\to\infty}\int_{\Omega_T}\Psi(\sqrt{\rho_A^n}, \sqrt{\rho_B^n})(\zeta_A^n - \zeta_B^n)\cdot\phi \, \mathrm{d}x \mathrm{d}t\\ 
	&= -\int_{\Omega_T}\Psi(\sqrt{\rho_A}, \sqrt{\rho_B})
	(1-\sqrt{\rho_A\rho_B})(\sqrt{\rho_A}-\sqrt{\rho_B})\dv{\phi}\,
	\mathrm{d}x \mathrm{d}t\\ 
	&\qquad - \int_{\Omega_T}(\sqrt{\rho_A}-\sqrt{\rho_B})
	\na [(1-\sqrt{\rho_A\rho_B})\Psi(\sqrt{\rho_A}, \sqrt{\rho_B})]
	\cdot\phi \, \mathrm{d}x \mathrm{d}t,%\\
	%&=\Psi_\eps(\sqrt{\rho_A\rho_B})(1-\sqrt{\rho_A\rho_B})
	%\na (\sqrt{\rho_A}-\sqrt{\rho_B}) ,
\end{align*}
%\begin{align*}
%&\Psi(\sqrt{\rho_A\rho_B})(\zeta_A - \zeta_B) = 
%\mbox{weak}\lim_{n\to\infty}\Psi_\eps(\sqrt{\rho_A^n\rho_B^n})(\zeta_A^n - \zeta_B^n)\\ 
%&= \nabla[\Psi_\eps(\sqrt{\rho_A\rho_B})
%(1-\sqrt{\rho_A\rho_B})(\sqrt{\rho_A}-\sqrt{\rho_B})] -
%(\sqrt{\rho_A}-\sqrt{\rho_B})
%\na [(1-\sqrt{\rho_A\rho_B})\Psi_\eps(\sqrt{\rho_A\rho_B})],%\\
%%&=\Psi_\eps(\sqrt{\rho_A\rho_B})(1-\sqrt{\rho_A\rho_B})
%%\na (\sqrt{\rho_A}-\sqrt{\rho_B}) ,
%\end{align*}
implying that \eqref{zeta.2} holds. Eq.~\eqref{zeta.3} is derived in a similar way by choosing $\Phi\in Z$ (defined in \eqref{class.Z}) and $\phi\in C^1_c(\Omega,\R^2)$, computing the expression $\int_{\Omega_T}\Phi(\sqrt{\rho_A^n\rho_B^n})\left(
\frac{\zeta_A}{\sqrt{\rho_A^n}}-\frac{\zeta_B}{\sqrt{\rho_B^n}}
\right)\cdot\phi \, \mathrm{d} x \mathrm{d} t$, integrating by parts and taking the limit $n\to\infty$.
%
%
%We are therefore able to identify the weak limit $J_i$ of $J^n_i$ in the non-critical set $E$: 
%\begin{align*}
%J_A = 2\na\sqrt{\rho_A} + 2\sqrt{\rho_A\rho_B}\na\sqrt{\rho_B},\quad
%J_B = 2\na\sqrt{\rho_B} + 2\sqrt{\rho_A\rho_B}\na\sqrt{\rho_A}\quad
%\mbox{a.e.~in }E .
%\end{align*}
%We wish now to identify the weak limits $J_A$, $J_B$ of $J^n_A$, $J_B^n$ also on the critical set $E^c$.

From \eqref{est.rhot} it follows
\begin{align*}
	\pa_t\rho_i^n\rightharpoonup \pa_t\rho_i\quad\mbox{weakly in }L^{4/3}(0,T; W^{-1,4/3}(\Omega)),\quad i \in \{A,B\},
\end{align*}
and via the compact Sobolev embedding $W^{1,4/3}(0,T; W^{1,4}(\Omega)')\hookrightarrow C_{weak}([0,T], W^{1,4}(\Omega)')$
\begin{align}
	\rho_i^n\to \rho_i\quad\mbox{in }C_{weak}([0,T], W^{1,4}(\Omega)'),\quad
	i \in \{A,B\}.\label{rho.C0}
\end{align}
Since $\zeta_i^n\rightharpoonup \zeta_i$ weakly in $L^2(\Omega_T)$, $i \in \{A,B\}$, while $\sqrt{\rho_A^n}\to\sqrt{\rho_A}$, $\sqrt{\rho_B^n}\to\sqrt{\rho_B}$
strongly in $L^{4-\delta}(\Omega_T)$ for every $\delta\in (0,3]$ (thanks to Step 3), it follows
that one can take the limit in \eqref{w.A.n}, \eqref{w.B.n} and conclude that
$\rho\equiv (\rho_A,\rho_B)$ satisfies \eqref{w.A}, \eqref{w.B}.

Let us now make sure that \eqref{w.ic} is satisfied by $\rho$.
Given any constant in time $\phi\in W^{1,4}(\Omega)$ and any $t\in (0,T)$, from \eqref{est.rhot} (and the fundamental theorem of calculus, which holds since $\rho^n\in W^{1,4/3}(0,T; W^{1,4}(\Omega)')$ and therefore $t\mapsto \langle\rho^n(t),\phi\rangle$ is in $W^{1,4/3}(0,T)$) it follows
\begin{align*}
	\left|\int_\Omega\rho_i^n(t)\phi \, \mathrm{d}x - 
	\int_\Omega\rho_i^{in}\phi \, \mathrm{d}x\right| &\leq
	\int_0^t |\langle\pa_t\rho_i^n(t') , \phi\rangle | \, \mathrm{d}t' \leq
	\|\pa_t\rho_i^n\|_{L^{4/3}(0,t; W^{1,4}(\Omega)')}
	\|\phi\|_{L^{4}(0,t; W^{1,4}(\Omega))}\\
	&\leq C t^{1/4}\|\phi\|_{W^{1,4}(\Omega)}.
\end{align*}
Since \eqref{rho.C0} holds, it follows that 
$\int_\Omega\rho_i^n(t)\phi \d x\to \int_\Omega\rho_i(t)\phi \, \mathrm{d}x$
as $n\to\infty$, so
\begin{align*}
	\left|\int_\Omega\rho_i(t)\phi \, \mathrm{d}x - 
	\int_\Omega\rho_i^{in}\phi \, \mathrm{d}x\right| &\leq C t^{1/4}\|\phi\|_{W^{1,4}(\Omega)},
	\quad \text{for all }\phi\in W^{1,4}(\Omega),
\end{align*}
which means
\begin{align*}
	\|\rho_i(t)-\rho_i^{in}\|_{W^{1,4}(\Omega)'}\leq C t^{1/4} .
\end{align*}
Being $t\in (0,T)$ arbitrary and $C>0$ independent of $t$, we deduce
that \eqref{w.ic} holds. Therefore, $\rho$
is a weak solution to \eqref{eq1}--\eqref{bc} according to Definition \ref{def.weaksol}.
This finishes the proof.
\end{proof}

\section{Stationary states}\label{S:ss}
In this section, we study the steady states of \eqref{E:system}-\eqref{E:1_BC}, i.e.~the constant-in-time solutions to \eqref{E:system}-\eqref{E:1_BC}. 
First, we provide a definition.
\begin{dfn}[Steady state] A {\em steady state} of \eqref{E:system}-\eqref{E:1_BC}
	is a weak solution to \eqref{E:system}-\eqref{E:1_BC} in the sense of Def.~\ref{def.weaksol} that is constant in time.
\end{dfn}
In the following we prove that the only allowed steady states for the system are constant.
\begin{prp}\label{prop.ss} Let $\Omega\, \in \R^2$ open, bounded and connected with Lipschitz boundary.
	Every steady state of \eqref{E:system}-\eqref{E:1_BC} is constant in $\Omega$.
\end{prp}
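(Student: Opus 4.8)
The plan is to read the maximal rigidity out of the integrated energy balance \eqref{ieb.zeta}. For a steady state the constant-in-time profile is its own initial datum, so $\mathcal H[\rho(T)]=\mathcal H[\rho^{in}]$ for every $T>0$, and \eqref{ieb.zeta} collapses to $4\int_{\Omega_T}(|\zeta_A|^2+|\zeta_B|^2)\dx\dt\le 0$. Since the integrand is nonnegative, this forces $\zeta_A=\zeta_B=0$ a.e.~in $\Omega_T$. The whole proof then reduces to decoding what the vanishing of $\zeta_A,\zeta_B$ imposes on $\rho$ through the renormalized identities \eqref{zeta.1}--\eqref{zeta.3}.

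The nondegenerate direction is immediate: \eqref{zeta.1} reads $(1+\rrhoAB)\nabla(\rrhoA+\rrhoB)=\zeta_A+\zeta_B=0$ a.e., and since $1+\rrhoAB\ge 1$ while $\rrhoA+\rrhoB\in L^2(0,T;H^1(\Omega))$ by \eqref{reg.2}, I obtain $\nabla(\rrhoA+\rrhoB)=0$, hence $\rrhoA+\rrhoB\equiv s$ for some constant $s$ by connectedness of $\Omega$.

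The core is the degenerate direction $w:=\rrhoA-\rrhoB$. Writing $p:=\rrhoAB$ (so that $p=(s^2-w^2)/4$ along the solution), relation \eqref{zeta.2} with $\zeta_A-\zeta_B=0$ says, for every $\Psi\in Y$, that $\nabla[\Psi(p)(1-p)w]=w\,\nabla[\Psi(p)(1-p)]$ in $\mathcal D'(\Omega_T)$ --- a renormalized version of the identity $\Psi(p)(1-p)\nabla w=0$. Formally this mirrors the linear system $\nabla\rrhoA+p\nabla\rrhoB=0$, $\nabla\rrhoB+p\nabla\rrhoA=0$, whose determinant $1-p^2$ is nonzero precisely where $p\neq 1$, forcing $\nabla\rrhoA=\nabla\rrhoB=0$ there. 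To turn this into a global statement I would fix $\Psi_0\in Y$ with $\Psi_0(p)(1-p)\ge 0$ vanishing only at $p=1$ (e.g.~a smooth, bounded truncation of $1-p$), define the strictly increasing primitive $\beta$ by $\beta'(u)=\Psi_0\big(u(s-u)\big)\big(1-u(s-u)\big)$, and check that the renormalized identity upgrades to $\nabla[\beta(\rrhoA)]=0$; then $\beta(\rrhoA)$ is constant on the connected set $\Omega$ and, $\beta$ being injective, $\rrhoA$ --- and hence $\rho_A$ together with $\rho_B=(s-\rrhoA)^2$ --- is constant.

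The main obstacle is the degenerate set $\{p=1\}=\{\rho_A\rho_B=1\}$, on which \eqref{zeta.2} yields no control of $\nabla w$. There the algebraic constraints $\rrhoA\rrhoB=1$ and $\rrhoA+\rrhoB=s$ pin $\{\rrhoA,\rrhoB\}$ to the two constants $(s\pm\sqrt{s^2-4})/2$, so $\rrhoA$ is a priori two-valued across this set; the delicate point is to prove that $\beta(\rrhoA)$ develops no jump there, i.e.~that the renormalized gradient identity genuinely promotes $\beta(\rrhoA)$ to a $W^{1,1}(\Omega)$ function with no singular part, so that a vanishing weak gradient does force global constancy. This is where the tailored test classes $Y$ (and, if needed, $Z$ through \eqref{zeta.3}) together with a truncation/approximation argument concentrated near $\{p=1\}$ must be deployed; everything else is routine.
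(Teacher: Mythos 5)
Your opening steps match the paper exactly: constancy in time plus \eqref{ieb.zeta} forces $\zeta_A=\zeta_B=0$, and \eqref{zeta.1} with connectedness gives $\rrhoA+\rrhoB\equiv s$. The problem is everything after that. Your plan for the degenerate direction defers precisely the step on which the whole proposition turns, and the tools you propose for it cannot close it. The set $\{\rho_A\rho_B=1\}$ is not a thin exceptional set that a truncation argument near $\{p=1\}$ can handle: a priori it may have positive measure, and indeed the configuration $\rho_A\rho_B\equiv 1$ with $\rrhoA$ jumping between the two roots of $X^2-sX+1=0$ is exactly the segregated steady state that the proposition is meant to exclude. On that set every weight $\Psi(p)(1-p)$ with $\Psi\in Y$ vanishes identically --- in fact quadratically, since the class $Y$ forces $|\Psi(\sigma)|\leq C|\sigma-1|$ --- so \eqref{zeta.2} carries \emph{no} information there, no matter how you choose $\Psi$ or how you concentrate test functions near $\{p=1\}$. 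Consequently your function $\beta(\rrhoA)$, whose derivative is built from such a weight, can jump freely between $\beta\bigl((s+\sqrt{s^2-4})/2\bigr)$ and $\beta\bigl((s-\sqrt{s^2-4})/2\bigr)$ on that set, and nothing in your argument rules this out. The remark ``if needed, $Z$ through \eqref{zeta.3}'' is not a fallback detail: it is the only mechanism in Definition \ref{def.weaksol} that sees the region $\{p=1\}$, because functions $\Phi\in Z$ are allowed to satisfy $\Phi(1)\neq 0$.

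For comparison, the paper avoids your difficulty by first proving that $\rrhoAB=k_2$ is \emph{globally constant}: starting from the analogue of your renormalized identity, it tests against functions of the solution itself ($\phi=\Psi(\rrhoAB)(1-\rrhoAB)\sqrt{\rho_j}\,\xi$, justified by density, the $L^\infty$ bound, and \eqref{reg.3}), then chooses $\Psi(\sigma)=\frac{1-\sigma}{1+\sigma}$ and runs a strict-monotonicity argument to conclude $\rrhoAB$ is constant. This produces a clean dichotomy: either $k_2\neq 1$, in which case \eqref{ss.1} with any $\Psi(k_2)\neq 0$ gives constancy at once, or $k_2=1$ identically, in which case \eqref{zeta.3} with $\Phi(\sigma)=1-\cos(\pi\sigma)$ (so $\Phi(1)=2\neq 0$) yields $\rho_A-\rho_B+\log(\rho_B/\rho_A)=k_3$, and the strict monotonicity of $F(\tau)=\tau-1/\tau-2\log\tau$ combined with $\rho_A\rho_B=1$ forces $\rho_A$ constant. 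Your proposal, to be repaired, would essentially have to reproduce both of these ingredients: without the global constancy of $\rrhoAB$ you face a degenerate set of unknown structure, and without \eqref{zeta.3} you have no information on it at all.
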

\begin{proof}
According to the definition of a weak solution, the integrated entropy balance \eqref{ieb.zeta} must hold. Being a steady state constant in time, this implies that $\zeta_A = \zeta_B = 0$ a.e.~in $\Omega$.
From \eqref{zeta.1} it follows immediately that $\rrhoA + \rrhoB = k_1$ is constant in $\Omega$. In particular, $\rho_A, \rho_B\in L^\infty(\Omega)$.
From \eqref{zeta.2} we deduce
\begin{align*}
	\int_{\Omega} &
	\left[\Psi(\sqrt{\rho_A\rho_B}) (1-\sqrt{\rho_A\rho_B}) ( \sqrt{\rho_A} - \sqrt{\rho_B} ) \right]\dv{\phi}  \, \mathrm{d}x  \\ 
	&+ \int_{\Omega}( \sqrt{\rho_A} - \sqrt{\rho_B} )\nabla\left[ 
	\Psi(\sqrt{\rho_A\rho_B}) (1-\sqrt{\rho_A\rho_B}) \right]\cdot\phi \, \mathrm{d}x  = 0
\end{align*}
for every $\phi\in C^1_c(\Omega;\R^2)$, $\Psi\in Y$ defined in \eqref{class.Y}. 
%\begin{align*}
%\nabla\left[\Psi(\sqrt{\rho_A\rho_B}) (1-\sqrt{\rho_A\rho_B}) ( \sqrt{\rho_A} - \sqrt{\rho_B} ) \right] 
%- ( \sqrt{\rho_A} - \sqrt{\rho_B} )\nabla\left[\Psi(\sqrt{\rho_A\rho_B}) (1-\sqrt{\rho_A\rho_B}) \right] = 0
%\end{align*}
%for every $\Psi \in C^1\cap W^{1,\infty}(\R_+)$ such that $|\Psi(s)|\leq C|s-1|$, for $s>0$. 
Since $\rrhoA + \rrhoB$ is constant it also holds trivially
\begin{align*}
	\int_{\Omega} &
	\left[\Psi(\sqrt{\rho_A\rho_B})(1-\sqrt{\rho_A\rho_B}) ( \sqrt{\rho_A} + \sqrt{\rho_B} ) \right]\dv{\phi} \dx  \\ 
	&+ \int_{\Omega}( \sqrt{\rho_A} + \sqrt{\rho_B} )\nabla\left[  \Psi(\sqrt{\rho_A\rho_B})(1-\sqrt{\rho_A\rho_B}) \right]\cdot\phi \dx  = 0 .
\end{align*}
%\begin{align*}
%\nabla\left[\Psi(\sqrt{\rho_A\rho_B}) (1-\sqrt{\rho_A\rho_B}) ( \sqrt{\rho_A} + \sqrt{\rho_B} ) \right] 
%- ( \sqrt{\rho_A} + \sqrt{\rho_B} )\nabla\left[\Psi(\sqrt{\rho_A\rho_B}) (1-\sqrt{\rho_A\rho_B}) \right] = 0.
%\end{align*}
Putting the two previous equations together yields
\begin{align}\nonumber
	\int_{\Omega} & \left[\Psi(\sqrt{\rho_A\rho_B})(1-\sqrt{\rho_A\rho_B}) \sqrt{\rho_i} \right]\dv{\phi} \dx \\
	& + \int_{\Omega}\sqrt{\rho_i} \nabla\left[ \Psi(\sqrt{\rho_A\rho_B})(1-\sqrt{\rho_A\rho_B}) \right]\cdot\phi
	\dx  = 0 ,\quad i \in \{A,B\}.\label{ss.1}
\end{align}
%\begin{align}\label{ss.1}
%\nabla\left[\Psi(\sqrt{\rho_A\rho_B}) (1-\sqrt{\rho_A\rho_B}) \sqrt{\rho_i} \right] 
%- \sqrt{\rho_i}\nabla\left[\Psi(\sqrt{\rho_A\rho_B}) (1-\sqrt{\rho_A\rho_B}) \right] = 0,
%\quad i\in \{A,B\}. 
%\end{align}

Via a density argument the above equation holds for every $\phi\in H^1(\Omega)$. Therefore (thanks to the boundedness of $\sqrt{\rho_A}$, $\sqrt{\rho_B}$ and \eqref{reg.3}) one can choose $\phi = \Psi(\sqrt{\rho_A\rho_B}) (1-\sqrt{\rho_A\rho_B}) \sqrt{\rho_j}\xi\in H^1(\Omega)$ with $\xi\in C^1_c(\Omega)$
arbitrary and obtain by summing in $i,j \in \{A,B\}$, $i\neq j$:
\begin{align*}
	&\int_{\Omega}\left[\Psi(\sqrt{\rho_A\rho_B})^2 
	(1-\sqrt{\rho_A\rho_B})^2 \rrhoAB \right]\dv{\xi}\dx \\
	&\qquad +\int_{\Omega} 2\Psi(\sqrt{\rho_A\rho_B}) (1-\sqrt{\rho_A\rho_B})\rrhoAB\nabla\left[\Psi(\sqrt{\rho_A\rho_B}) (1-\sqrt{\rho_A\rho_B}) \right]\cdot\xi \dx = 0
\end{align*}
which is equivalent to
\begin{align*}
	&\int_\Omega\Psi(\sqrt{\rho_A\rho_B}) (1-\sqrt{\rho_A\rho_B})\nabla\left[\Psi(\sqrt{\rho_A\rho_B}) (1-\sqrt{\rho_A\rho_B})\rrhoAB \right]\cdot\xi \dx \\ 
	&\qquad = \int_\Omega
	\Psi(\sqrt{\rho_A\rho_B}) (1-\sqrt{\rho_A\rho_B})\rrhoAB\nabla\left[\Psi(\sqrt{\rho_A\rho_B}) (1-\sqrt{\rho_A\rho_B}) \right]\cdot\xi \dx,
\end{align*}
for every $\xi\in C^1_c(\Omega; \R^2)$, $\Psi \in Y$ defined in \eqref{class.Y}. 

Subtracting $\int_\Omega\Psi(\sqrt{\rho_A\rho_B}) (1-\sqrt{\rho_A\rho_B})\nabla\left[\Psi(\sqrt{\rho_A\rho_B}) (1-\sqrt{\rho_A\rho_B})\right]\cdot\xi \dx$ from both sides of the above inequality leads to
\begin{align*}
	&\int_\Omega\Psi(\sqrt{\rho_A\rho_B}) (1-\sqrt{\rho_A\rho_B})\nabla\left[\Psi(\sqrt{\rho_A\rho_B}) (1-\sqrt{\rho_A\rho_B})^2\right]\cdot\xi \dx \\ 
	&\qquad = \int_\Omega
	\Psi(\sqrt{\rho_A\rho_B}) (1-\sqrt{\rho_A\rho_B})^2\nabla\left[\Psi(\sqrt{\rho_A\rho_B}) (1-\sqrt{\rho_A\rho_B}) \right]\cdot\xi \dx,
\end{align*}
for every $\xi\in C^1_c(\Omega; \R^2)$, $\Psi \in Y$.
Choosing $\Psi(s)=\frac{1-s}{1+s}$ and arguing by density lead to
\begin{align*}
	\frac{(1-\sqrt{\rho_A\rho_B})^2}{1+\sqrt{\rho_A\rho_B}}\nabla\left[\frac{(1-\sqrt{\rho_A\rho_B})^3}{1+\sqrt{\rho_A\rho_B}}\right] - 
	\frac{(1-\sqrt{\rho_A\rho_B})^3}{1+\sqrt{\rho_A\rho_B}}\nabla\left[\frac{(1-\sqrt{\rho_A\rho_B})^2}{1+\sqrt{\rho_A\rho_B}} \right] = 0
	\quad\mbox{a.e.~in }\Omega .
\end{align*}
Let $w = \frac{(1-\sqrt{\rho_A\rho_B})^3}{1+\sqrt{\rho_A\rho_B}}$.
Since $g(s)\equiv\frac{(1-s)^3}{1+s}$ is strictly monotone and therefore invertible as mapping $\R_+\to (-\infty,1]$, we can define $F(y)\equiv \frac{(1-g^{-1}(y))^2}{1+g^{-1}(y)}$
for $y\leq 1$ and deduce
\begin{align*}
	\nabla\tilde{F}(w) = F(w)\nabla w - w\nabla F(w) = 0\quad\mbox{a.e.~in }\Omega 
\end{align*}
where $\tilde{F}'(y) \equiv F(y)-y F'(y) = -y^2\frac{d}{dy}\left( \frac{F(y)}{y}\right)$ for $y\leq 1$, $y\neq 0$. 
It follows that $\tilde{F}(w)$ is constant.
However $\frac{F(y)}{y} = \frac{1}{1-g^{-1}(y)}$ which is strictly monotone for $y\neq 0$. This means that $\tilde{F}$ is strictly monotone. As a consequence, $w=g(\sqrt{\rho_A\rho_B})$ is constant.
Being $g$ strictly monotone, it follows that 
$\rrhoAB = k_2$ is constant in $\Omega$. We distinguish two cases, according to the value of $k_2$.\medskip
\\
{\em Case 1: $k_2\neq 1$.} In this situation \eqref{ss.1} immediately yields that $\rho_A$, $\rho_B$ are constant, provided that one chooses $\Psi$ such that $\Psi(k_2)\neq 0$.\medskip\\
{\em Case 2: $k_2=1$.} In this case we must consider \eqref{zeta.3}, 
with 
$$\Phi(s) = \begin{cases}
	1 - \cos(\pi s) & 0\leq s \leq 1\\
	2 & s > 1
\end{cases} $$ 
which indeed belongs to the class $Z$ defined in \eqref{class.Z}. We get
\begin{align*}
	\int_{\Omega}\left(\rho_A - \rho_B + \log\frac{\rho_B}{\rho_A}\right)\dv{\phi}
	\dx = 0\qquad\forall \phi\in C^1_c(\Omega),
\end{align*}
which implies that a constant $k_3>0$ exists such that
\begin{align*}
	\rho_A - \rho_B + \log\frac{\rho_B}{\rho_A} = k_3\quad\mbox{a.e.~in }\Omega .
\end{align*}
Since $\rho_A\rho_B=1$ by assumption, it follows
\begin{align*}
	F(\rho_A) \equiv \rho_A - \frac{1}{\rho_A} - 2\log\rho_A = k_3\quad\mbox{a.e.~in }\Omega .
\end{align*}
However, $F'(s) = 1 + \frac{1}{s^2} - \frac{2}{s} = \frac{(s-1)^2}{s^2}>0$ for $s\neq 1$,
which means that $F$ is strictly monotone. We conclude that $\rho_A$ is constant, implying that also $\rho_B$ is constant. This finishes the proof of the Proposition.
\end{proof}
%\eqref{zeta.4},
%which immediately imply that $\log\rho_A + \rho_B$, $\log\rho_B + \rho_A$ are constant in $\Omega$. Therefore, also their difference is constant, i.e.
%\begin{align*}
%\rho_A - \rho_B + \log\frac{\rho_B}{\rho_A} = k_3\quad\mbox{a.e.~in }\Omega .
%\end{align*}
%Since $\rho_A\rho_B=1$ by assumption, it follows
%\begin{align*}
%F(\rho_A) \equiv \rho_A - \frac{1}{\rho_A} - 2\log\rho_A = k_3\quad\mbox{a.e.~in }\Omega .
%\end{align*}
%However, $F'(s) = 1 + \frac{1}{s^2} - \frac{2}{s} = \frac{(s-1)^2}{s^2}>0$ for $s\neq 1$,
%which means that $F$ is strictly monotone. We conclude that $\rho_A$ is constant, implying that also $\rho_B$ is constant. This finishes the proof of the Proposition.
%\end{proof}
The result might mean that the class of solutions we considered is perhaps too small, as segregated states are ruled out. On the other hand, the definition arises naturally from the weak stability argument and only employs the entropy structure of the equations. It is entirely possible that different analytical tools might yield segregated steady states.

\subsection{Linear stability analysis} \label{S:linearStabilityAnalysis}
We now consider the stability of the steady states of the system \eqref{E:system}-\eqref{E:1_BC}, copied here for reference:
\begin{equation*}
\begin{cases}
\pa_t \rho_A(t,x,y) = \frac{1}{4} \nabla \cdot \left( \nabla \rho_A(t,x,y) + 2 \beta c \rho_A(t,x,y) \nabla \rho_B(t,x,y) \right),\\
\pa_t \rho_B(t,x,y) = \frac{1}{4} \nabla \cdot \left( \nabla \rho_B(t,x,y) + 2 \beta c \rho_B(t,x,y) \nabla \rho_A(t,x,y) \right).
\end{cases}
\end{equation*}
In order to better understand the system, we perform a linear stability analysis around the uniformly distributed steady state,
\begin{align} \label{unif.ss}
\begin{cases}
\bar{\rho}_A(x)= 
N_1, \quad &0 \leq x \leq L,\\
\bar{\rho}_B(x) = 
N_2, \quad &0 \leq x < L,
\end{cases}
\end{align} where $N_1, N_2 >0$ are the equilibrium densities and $L \in \R_+$.

To this end, we consider perturbations of the form $\epsilon = \delta_i e^{\alpha t} e^{ikx}$ with $\delta_i \ll 1$ where $ i \in \{A,B\}$.
\begin{align} \label{perturb}
{\rho}_A(x)&= \bar{\rho}_A + \delta_A e^{\alpha t + i kx},
 \quad 0 \leq x \leq L,\\
{\rho}_B(x)&= \bar{\rho_B} + \delta_B e^{\alpha t + i kx},
 \quad 0 \leq x < L.
\end{align}
\begin{lem}
	The uniform steady state solution \eqref{unif.ss} of system \eqref{E:system}-\eqref{E:1_BC} is linearly stable if the following condition holds true:
	\begin{align}
	\label{lin.stab}
	\beta c \leq \frac{1}{2 \sqrt{\bar{\rho}_A \bar{\rho}_B} }.
	\end{align}
\end{lem}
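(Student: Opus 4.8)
The plan is to run the standard normal-mode computation and read off the stability threshold from a $2\times 2$ eigenvalue problem. First I would write $\rho_A = \bar\rho_A + u$ and $\rho_B = \bar\rho_B + v$ with $u,v$ infinitesimal, substitute into \eqref{E:system}, and discard every term that is quadratic in the perturbations. Because the background state $(\bar\rho_A,\bar\rho_B) = (N_1,N_2)$ is spatially constant, $\na\bar\rho_A = \na\bar\rho_B = 0$, so the cross term $\rho_A\na\rho_B$ linearizes to $N_1\na v$ and $\rho_B\na\rho_A$ to $N_2\na u$. This yields the constant-coefficient linear system
\begin{equation*}
\pa_t u = \tfrac14\left(\Delta u + 2\beta c\,N_1\,\Delta v\right),\qquad
\pa_t v = \tfrac14\left(\Delta v + 2\beta c\,N_2\,\Delta u\right).
\end{equation*}

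Next I would insert the ansatz \eqref{perturb}. Under the homogeneous Neumann conditions the admissible spatial modes are of the form $\cos(\mathbf k\cdot\mathbf x)$ with $\mathbf k$ in the dual lattice determined by $\Omega$ (for the interval $[0,L]$ this means $k = n\pi/L$, $n\in\NN$); for computing the growth rate it is equivalent to use $e^{\alpha t + i\mathbf k\cdot\mathbf x}$, under which $\Delta\mapsto -|\mathbf k|^2$ and $\pa_t\mapsto\alpha$. Substituting turns the PDE system into the algebraic eigenvalue problem
\begin{equation*}
\alpha\begin{pmatrix}\delta_A\\ \delta_B\end{pmatrix}
= -\frac{|\mathbf k|^2}{4}\,M\begin{pmatrix}\delta_A\\ \delta_B\end{pmatrix},
\qquad
M = \begin{pmatrix} 1 & 2\beta c\,N_1\\ 2\beta c\,N_2 & 1\end{pmatrix},
\end{equation*}
so each admissible growth rate $\alpha$ equals $-|\mathbf k|^2/4$ times an eigenvalue of $M$.

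I would then compute the eigenvalues of $M$, namely $\lambda_\pm = 1 \pm 2\beta c\sqrt{N_1 N_2}$ (these are exactly the eigenvalues $1\pm 2\beta c\sqrt{\bar\rho_A\bar\rho_B}$ of the diffusion matrix evaluated at the background state, matching the computation preceding the Local-in-time existence lemma). Hence the two growth branches are $\alpha_\pm = -\tfrac{|\mathbf k|^2}{4}\bigl(1 \pm 2\beta c\sqrt{N_1 N_2}\bigr)$. Linear stability requires $\operatorname{Re}\alpha\le 0$ for every admissible $\mathbf k$ and both branches. Since $\beta,c,N_1,N_2>0$ the branch $\alpha_+$ is always $\le 0$, so the only threat is $\alpha_-$, which satisfies $\alpha_-\le 0$ for all $\mathbf k$ if and only if $1 - 2\beta c\sqrt{N_1 N_2}\ge 0$; this is precisely $\beta c\le \frac{1}{2\sqrt{\bar\rho_A\bar\rho_B}}$, establishing \eqref{lin.stab}.

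The computation is entirely elementary, so the only points requiring care are conceptual. The destabilizing branch is the one tied to the smaller eigenvalue $\lambda_-$ of the diffusion matrix, which changes sign exactly when the normal ellipticity of $A(\rho)$ is lost at the background state (that is, when $\bar\rho_A\bar\rho_B > (2\beta c)^{-2}$); this dovetails with the ill-posedness discussion in the introduction. I would also note that the homogeneous mode $\mathbf k = 0$ always gives $\alpha = 0$, a neutral mode reflecting the mass conservation \eqref{mass.cons} rather than an instability, and that the marginal case $\beta c = \frac{1}{2\sqrt{\bar\rho_A\bar\rho_B}}$ yields $\alpha_- = 0$, i.e.\ linear marginal (neutral) stability rather than strict asymptotic stability.
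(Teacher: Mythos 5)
Your proposal is correct and takes essentially the same route as the paper: both run the normal-mode computation around the constant state, reduce to a $2\times 2$ eigenvalue problem, and obtain the growth rates $\alpha_\pm = -\tfrac{k^2}{4}\left(1 \mp 2\beta c \sqrt{\bar{\rho}_A\bar{\rho}_B}\right)$, from which \eqref{lin.stab} follows. The only cosmetic difference is that you linearize the PDE before inserting the Fourier ansatz \eqref{perturb}, whereas the paper substitutes the ansatz directly and discards the $\mathcal{O}(\delta_A\delta_B)$ terms; your extra remarks on Neumann-admissible wavenumbers, the neutral $k=0$ mode, and the link to loss of normal ellipticity are welcome additions rather than deviations.
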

\begin{proof}
We plug solutions \eqref{perturb} into system \eqref{E:system}-\eqref{E:1_BC}:
\begin{align*}
&\begin{cases}
\frac{\partial }{\partial t} (\bar{\rho}_A + \delta_A e^{\alpha t + i kx}) =  \frac{1}{4} \nabla \cdot \left( \nabla (\bar{\rho}_A + \delta_A e^{\alpha t + i kx}) + 2 \beta c (\bar{\rho}_A + \delta_A e^{\alpha t + i kx}) \nabla (\bar{\rho_B} + \delta_B e^{\alpha t + i kx}) \right),\\
\frac{\partial }{\partial t} (\bar{\rho}_B + \delta_B e^{\alpha t + i kx}) =  \frac{1}{4} \nabla \cdot \left( \nabla (\bar{\rho}_B + \delta_B e^{\alpha t + i kx}) + 2 \beta c (\bar{\rho}_B + \delta_B e^{\alpha t + i kx}) \nabla (\bar{\rho_A} + \delta_A e^{\alpha t + i kx}) \right),
\end{cases}
\end{align*} and obtain 
\begin{align*}
&\begin{cases}
\alpha \delta_A e^{\alpha t + i kx} =  \frac{1}{4} \nabla \cdot \left( (i k \delta_A e^{\alpha t + i kx}) + 2 \beta c (\bar{\rho}_A + \delta_A e^{\alpha t + i kx}) (i k \delta_B e^{\alpha t + i kx}) \right),\\
\alpha \delta_B e^{\alpha t + i kx} =  \frac{1}{4} \nabla \cdot \left( (i k \delta_B e^{\alpha t + i kx}) + 2 \beta c (\bar{\rho}_B + \delta_B e^{\alpha t + i kx}) (i k \delta_A e^{\alpha t + i kx}) \right).
\end{cases}\\
\end{align*}
This implies 
\begin{align*}
&\begin{cases}
\alpha \delta_A =  \frac{-k^2}{4} \delta_A  - \frac{k^2}{2} \delta_B  \beta c \bar{\rho}_A  + \mathcal{O}(\delta_A \delta_B),\\
\alpha \delta_B =  \frac{-k^2}{4} \delta_B  - \frac{k^2}{2}  \delta_A \beta c \bar{\rho}_B  + \mathcal{O}(\delta_A \delta_B).
\end{cases}
\end{align*}
Writing the linear part of this in matrix-vector form $(M -\alpha I) \vec{\delta} = 0$, we have
\begin{equation*}
\begin{bmatrix}
-(\frac{k^2}{4}+\alpha) & - \frac{k^2}{2}  \beta c \bar{\rho}_A\\
- \frac{k^2}{2}  \beta c \bar{\rho}_B & -(\frac{k^2}{4}+\alpha) \\
\end{bmatrix} 
\begin{bmatrix}  \delta_A\\  \delta_B\\  \end{bmatrix} = \begin{bmatrix}  0\\  0\\  \end{bmatrix}.
\end{equation*}
Hence, it follows from the characteristic polynomial 
\[\left(\frac{k^2}{4}+\alpha \right)^2 - \frac{k^4}{4} (\beta c)^2 \bar{\rho}_A \bar{\rho}_B = 0, \]
that
\[\alpha = \frac{k^2}{2} \left(\frac{-1}{2} \pm  \beta c \sqrt{\bar{\rho}_A \bar{\rho}_B} \right). \]
Therefore, the uniform steady state solution will be linearly stable when $\beta c \leq  \frac{1}{2 \sqrt{\bar{\rho}_A \bar{\rho}_B}}$.
\end{proof}
\section{Long-time behavior}\label{S:long_time}
In this section we give our result on the long-time behaviour of solutions. We denote
\[\fint_\Omega\equiv |\Omega|^{-1}\int_\Omega.\]

\begin{thm}[Convergence to steady state]\label{thm:ss_conv}
Let $\rho : \OmT\to\R^2_+$ be a weak solution to \eqref{eq1}--\eqref{bc} according to Definition \ref{def.weaksol}.
Define the constant steady state associated to $\rho$ as 
$$\rho^\infty\equiv (\rho_A^\infty, \rho_B^\infty),\quad
\rho_i^\infty = \fint_\Omega \rho_i(t) \d x = 
\fint_\Omega \rho_i^{in} \d x\quad i\in \{A,B \},\quad t>0,
$$
and assume that $\rho_i^\infty > 0$ for $i\in \{A,B \}$. 
Define the relative entropy functional as
\begin{align*}
\en(\rho\mid \rho^\infty) = 
\int_\Omega h^*(\rho\mid\rho^\infty) \d x,
\end{align*} where 
where 
\begin{align*}
	h^*(\rho\mid\rho^\infty) &=  h(\rho) - h(\rho^\infty) - h'(\rho^\infty)\cdot(\rho-\rho^\infty), \\
	&= \rho_A\log\frac{\rho_A}{\rho_A^\infty} + \rho_B\log\frac{\rho_B}{\rho_B^\infty} + \rho_A^\infty - \rho_A + \rho_B^\infty - \rho_B + (\rho_A - \rho_A^\infty)(\rho_B - \rho_B^\infty),\\
	h(\rho) &= \rho_A\log\rho_A - \rho_A + \rho_B\log\rho_B  - \rho_B + \rho_A\rho_B .
\end{align*}
Then $\en(\rho(t)\mid \rho^\infty)\to 0$ as $t\to\infty$.

Furthermore, if $\rho_A^\infty\leq 1$ and $\rho_B^\infty\leq 1$, then $\rho(t)\to\rho^\infty$ strongly in $L^1(\Omega)$ as $t\to \infty$.
%\begin{align*}
%h^*(\rho\mid\rho^\infty) &=  h(\rho) - h(\rho^\infty) - h'(\rho^\infty)\cdot(\rho-\rho^\infty), \\
%h(\rho) &= \rho_A\log\rho_A + \rho_B\log\rho_B + \rho_A\rho_B .
%\end{align*}
%Then $\en(\rho(t)\mid \rho^\infty)\to 0$ as $t\to\infty$.
%
%Furthermore, if $\rho_A^\infty\leq 1$ and $\rho_B^\infty\leq 1$, then $\rho(t)\to\rho^\infty$ strongly in $L^1(\Omega)$ as $t\to \infty$.
\end{thm}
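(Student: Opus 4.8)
The plan is to couple the Lyapunov (entropy--dissipation) structure of the system with a compactness argument on time translates, exploiting that the only steady states are constants (Proposition~\ref{prop.ss}). First I would note that, since $\rho^\infty$ is constant and the mass of each species is conserved by~\eqref{mass}, the affine term in $h^*$ integrates to zero,
\[
\int_\Omega Dh(\rho^\infty)\cdot(\rho(t)-\rho^\infty)\,\mathrm{d}x = 0,
\]
so that $\en(\rho(t)\mid\rho^\infty)=\en[\rho(t)]-\en[\rho^\infty]$ for every $t$; it therefore suffices to control $\en[\rho(t)]$. The integrated energy balance~\eqref{ieb.zeta}, used on $[0,T]$ and (by time--translation invariance of the equations) on every interval $[t_1,t_2]$, shows that $t\mapsto\en[\rho(t)]$ is nonincreasing, while $s\log s-s\ge-1$ and $\rho_A\rho_B\ge 0$ give the lower bound $\en[\rho(t)]\ge-2|\Omega|$. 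Hence $\en[\rho(t)]$ decreases to a finite limit $L$, and letting $T\to\infty$ in~\eqref{ieb.zeta} yields finiteness of the total dissipation,
\[
4\int_0^\infty\!\!\int_\Omega\bigl(|\zeta_A|^2+|\zeta_B|^2\bigr)\,\mathrm{d}x\,\mathrm{d}t\le\en[\rho^{in}]-L<\infty,
\]
so there is a sequence $t_n\to\infty$ with $\int_{t_n}^{t_n+1}\!\int_\Omega(|\zeta_A|^2+|\zeta_B|^2)\,\mathrm{d}x\,\mathrm{d}t\to 0$.

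Next I would run a time--shift compactness argument. Setting $\rho^n(s,\cdot)=\rho(t_n+s,\cdot)$ for $s\in(0,1)$, these are weak solutions sharing the uniform-in-time bounds~\eqref{est.sum.L4},~\eqref{est.Hm1.b},~\eqref{est.flux}, which depend only on the (nonincreasing) entropy and are thus insensitive to the shift. Re-deploying the div--curl machinery of Lemma~\ref{lem.compact} (Step~1 for $\rho_A^n+\rho_B^n$, Step~2 for $\rho_A^n\rho_B^n$, Step~3 for $\rho_A^n,\rho_B^n$ separately) produces a subsequence with $\rho^n\to\rho^*$ strongly in $L^1(\Omega\times(0,1))$ and a.e. Since the shifted dissipation vanishes, $\zeta_A^n,\zeta_B^n\to 0$ in $L^2$, so the renormalized identities~\eqref{zeta.1}--\eqref{zeta.3} pass to the limit with $\zeta_A=\zeta_B=0$; equivalently $\partial_t\rho^*=0$, i.e.\ $\rho^*$ is a steady state. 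Proposition~\ref{prop.ss} then forces $\rho^*$ constant in $x$, and mass conservation identifies $\rho^*\equiv\rho^\infty$. Thus for a.e.\ $s$ one has $\rho(t_n+s)\to\rho^\infty$ a.e.\ in $\Omega$ along a subsequence; the uniform bound on $\en[\rho(t_n+s)]$ supplies, via de la Vall\'ee--Poussin with the superlinear weight $u\mapsto u\log u$, uniform integrability of $\rho_A^n,\rho_B^n$ (and of the product by~\eqref{est.Hm1.b}), so $h^*(\rho(t_n+s)\mid\rho^\infty)$ is uniformly integrable and tends to $0$ a.e. By Vitali's theorem $\en(\rho(t_n+s)\mid\rho^\infty)\to 0$, and monotonicity in $t$ upgrades this to $\en(\rho(t)\mid\rho^\infty)\to 0$ for the full limit, proving the first assertion.

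For the $L^1$ statement I would use the hypothesis $\rho_A^\infty,\rho_B^\infty\le 1$, which places $\rho^\infty$ in the convex region of $h$ (the Hessian $D^2h(\rho^\infty)=\left(\begin{smallmatrix}1/\rho_A^\infty & 1\\ 1 & 1/\rho_B^\infty\end{smallmatrix}\right)$ is positive semidefinite). Splitting $h^*(\rho\mid\rho^\infty)=h^*_{MB}(\rho\mid\rho^\infty)+(\rho_A-\rho_A^\infty)(\rho_B-\rho_B^\infty)$ with $h^*_{MB}\ge 0$, one verifies the pointwise sign $h^*(\rho\mid\rho^\infty)\ge 0$ on $\R^2_+$ together with a coercivity bound $h^*(\rho\mid\rho^\infty)\ge c\,\theta(|\rho-\rho^\infty|)$ for a convex superlinear $\theta$; combined with the Csisz\'ar--Kullback--Pinsker inequality applied to the Maxwell--Boltzmann part, this turns $\en(\rho(t)\mid\rho^\infty)\to 0$ into $\|\rho_A(t)-\rho_A^\infty\|_{L^1(\Omega)}+\|\rho_B(t)-\rho_B^\infty\|_{L^1(\Omega)}\to 0$.

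The main obstacle is the time--shift compactness: the translates $\rho^n$ are only generic weak solutions in the sense of Definition~\ref{def.weaksol}, for which $\sqrt{\rho_A^n},\sqrt{\rho_B^n}$ need not lie in $L^2(0,T;H^1(\Omega))$, so Lemma~\ref{lem.compact} does not apply verbatim; one must reprove strong $L^1$-compactness directly from the renormalized formulation~\eqref{zeta.1}--\eqref{zeta.3} and the degeneracy-aware control of~\eqref{reg.3}, and check that the vanishing tail dissipation genuinely forces $\zeta_A=\zeta_B=0$ in the limit so that Proposition~\ref{prop.ss} can be invoked. The sign and coercivity of $h^*$ under $\rho_A^\infty\rho_B^\infty\le 1$ is a secondary, elementary but not entirely obvious, point.
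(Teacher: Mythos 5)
Your skeleton for the first assertion (monotone entropy, finite total dissipation, a sequence of times with vanishing dissipation, identification of the limit with the constant state by mass conservation, Vitali plus monotonicity to upgrade to the full limit) matches the paper's architecture, but your central compactness step is not the paper's and, as you yourself flag, it does not close: Lemma \ref{lem.compact} requires $\sqrt{\rho_A^n},\sqrt{\rho_B^n}\in L^2(0,T;H^1(\Omega))$, which the time translates $\rho(t_n+\cdot)$ --- being generic weak solutions in the sense of Definition \ref{def.weaksol} --- need not possess, and ``reproving strong compactness from the renormalized formulation'' is precisely the difficulty that makes global existence an open problem; it cannot be left as a remark. The paper sidesteps space--time compactness entirely. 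It chooses $t_n$ so that the \emph{spatial} dissipation at time $t_n$ vanishes, i.e.\ \eqref{cnv.1}, and then works with the fixed-time slices $\rho^n=\rho(t_n)$, which by \eqref{reg.2} have the needed spatial regularity. Poincar\'e--Wirtinger forces $\sqrt{\rho_A^n}+\sqrt{\rho_B^n}$ and $(\sqrt{\rho_A^n\rho_B^n}-1)^5$ to converge to constants, and a two-case analysis ($M\mu\neq 1$ versus $M\mu=1$, the latter mirroring Step 3 of Lemma \ref{lem.compact} at the slice level, with the cutoff $\psi$ and the sign $\sigma^n$) yields $\rho^n\to\rho^\infty$ a.e.\ in $\Omega$; no div--curl lemma and no invocation of Proposition \ref{prop.ss} as a black box. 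The missing idea in your argument is thus: take slices, not translates.

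The second assertion contains the more serious gap. Positivity of $h^*$ away from $\rho^\infty$ is not ``elementary'': $h$ is \emph{not} convex (this is the paper's whole difficulty), so $h^*$ could a priori have interior critical points with negative values, and local positive semidefiniteness of $D^2h(\rho^\infty)$ says nothing global. Your splitting $h^*=h^*_{MB}+(\rho_A-\rho_A^\infty)(\rho_B-\rho_B^\infty)$ does not settle the sign, since the cross term is negative on two of the four quadrants around $\rho^\infty$ and is not pointwise dominated by $h^*_{MB}$; likewise Csisz\'ar--Kullback--Pinsker applied to the Maxwell--Boltzmann part would need $\int_\Omega h^*_{MB}\,\mathrm{d}x\to 0$, which does not follow from $\en(\rho(t)\mid\rho^\infty)\to 0$ without a sign on the integrated cross term. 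The paper's proof of \eqref{claim.h.2} is exactly where the hypothesis $\rho_A^\infty\le 1$ \emph{and} $\rho_B^\infty\le 1$ enters (not merely $\rho_A^\infty\rho_B^\infty\le 1$, which you substitute at one point; the monotonicity of $g(s)=\log s+\rho_A^\infty e^{\rho_B^\infty-s}$ uses both bounds): first strict positivity on the boundary $\{\min(\rho_A,\rho_B)=0\}$ by an explicit convexity computation, then a contradiction argument reducing any nonpositive value to an interior critical point solving $Dh(\tilde\rho)=Dh(\rho^\infty)$, whose uniqueness $\tilde\rho=\rho^\infty$ follows from the strict monotonicity of $g$. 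Combined with the linear-growth bound \eqref{claim.h.1}, this gives uniform positivity of $h^*$ off any neighborhood of $\rho^\infty$, hence convergence in measure, and then strong $L^1$ convergence by dominated convergence using the $L\log L$ bounds from \eqref{ieb.zeta} --- no Csisz\'ar--Kullback--Pinsker inequality is needed or used.
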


\begin{remark}
In the physical variables, the constraint on the steady state is $\rho_i^\infty\leq (2\beta c)^{-1}$, $i \in \{A,B\}$.
\end{remark}
\begin{proof}
The proof is divided into two parts. First we prove that 
$\lim_{t\to\infty}\en(\rho(t)\mid\rho^\infty)=0$, then we show that,
if both masses are not larger than 1,
then $\rho(t)\to\rho^\infty$ as $t\to\infty$ strongly in $L^1(\Omega)$.\medskip\\
{\bf Step 1: Show that $\lim_{t\to\infty}\en(\rho(t)\mid\rho^\infty)=0$.} 
\begin{remark}
In the following we will identify the quantities $\zeta_A$, $\zeta_B$ with $\nabla\rrhoA + \rrhoAB\nabla\rrhoB$,
$\nabla\rrhoB + \rrhoAB\nabla\rrhoA$, respectively.
Albeit this identification is not known to hold exactly for nondegenerate weak solutions (as the latter expressions are not clearly defined),
the present theorem could be proved also by using the properties \eqref{zeta.1}--\eqref{zeta.2} and proceeding in a similar way as in the proof of Prop.~\ref{prop.ss}.
We chose to omit technical details for the sake of a simple exposition.
\end{remark}
From \eqref{ei.A.int} it follows that
\begin{align*}
\int_0^\infty\int_\Omega\left( 
(1+\sqrt{\rho_A\rho_B})^2|\nabla(\sqrt{\rho_A}+\sqrt{\rho_B})|^2
+ (1-\sqrt{\rho_A\rho_B})^2|\nabla(\sqrt{\rho_A}-\sqrt{\rho_B})|^2
 \right) \mathrm{d}x \mathrm{d}t \leq C .
\end{align*}
As a consequence there exists an increasing sequence of time instants $t_n\to\infty$ such that
\begin{align}\label{cnv.1}
(1+\sqrt{\rho_A\rho_B})\nabla(\sqrt{\rho_A}+\sqrt{\rho_B})
\mid_{t=t_n}\to 0,
\quad
(1-\sqrt{\rho_A\rho_B})\nabla(\sqrt{\rho_A}-\sqrt{\rho_B})
\mid_{t=t_n}\to 0 \quad \mbox{ strongly in }L^2(\Omega),
\end{align}
as $n\to\infty$. 

Define $\rho_i^n\equiv\rho_i(t_n)$ for $i \in \{A,B\}$, $n\in\NN$. 
In particular, $\nabla(\sqrt{\rho_A^n}+\sqrt{\rho_B^n})$ is bounded
in $L^2(\Omega)$. However by mass conservation $\sqrt{\rho_A^n}+\sqrt{\rho_B^n}$ is bounded in $L^2(\Omega)$, and so
$\sqrt{\rho_A^n}+\sqrt{\rho_B^n}$ is bounded in $H^1(\Omega)$. By Sobolev embedding (in 2 space dimensions) $\sqrt{\rho_A^n}+\sqrt{\rho_B^n}$ is bounded in $L^p(\Omega)$ for every $p<\infty$.

From \eqref{cnv.1} we deduce that $\nabla(\sqrt{\rho_A^n}+\sqrt{\rho_B^n})\to 0$ strongly in $L^2(\Omega)$. Poincar\'e-Wirtinger Lemma yields
\begin{align}\label{cnv.2}
\sqrt{\rho_A^n}+\sqrt{\rho_B^n} - \fint_\Omega(\sqrt{\rho_A^n}+\sqrt{\rho_B^n}) \, \mathrm{d}x \to 0
\quad \mbox{ strongly in }L^p(\Omega), \text{ for all } p<\infty.
\end{align}
From \eqref{cnv.1} we also deduce
\begin{align*}
	\|(1-\sqrt{\rho_A^n\rho_B^n})\nabla(\sqrt{\rho_A^n}+\sqrt{\rho_B^n})\|_{L^2(\Omega)}\leq
	\|(1+\sqrt{\rho_A^n\rho_B^n})\nabla(\sqrt{\rho_A^n}+\sqrt{\rho_B^n})\|_{L^2(\Omega)}\to 0,\\
	\|(1-\sqrt{\rho_A^n\rho_B^n})\nabla(\sqrt{\rho_A^n}-\sqrt{\rho_B^n})\|_{L^2(\Omega)}\to 0,
\end{align*}
which immediately implies
\begin{align}\label{cnv.a}
	(1-\sqrt{\rho_A^n\rho_B^n})\nabla\rho_i^n\to 0 \quad 
	\mbox{ strongly in }L^2(\Omega),~~i \in \{A,B\}.
\end{align}
%\begin{align}\label{cnv.a}
%(1-\sqrt{\rho_A^n\rho_B^n})\nabla\rho_i^n\to 0
%\mbox{ strongly in }L^2(\Omega),~~i \in \{A,B\}.
%\end{align}
The above relation and the uniform $L^p$ bound for $\rho_i^n$ lead to
(also, thanks to the definition of weak solution \eqref{def.weaksol}, $(\sqrt{\rho_A^n\rho_B^n}-1)^3\sqrt{\rho_i^n}\in H^1(\Omega)$ is an admissible test function, $i \in \{A,B\}$)
\begin{align}\label{cnv.AB}
\frac{1}{5}\nabla\left( (\sqrt{\rho_A^n\rho_B^n}-1)^5 \right)
= (\sqrt{\rho_A^n\rho_B^n}-1)^4\left( 
\sqrt{\rho_A^n}\nabla\sqrt{\rho_B^n} +\sqrt{\rho_B^n}\nabla\sqrt{\rho_A^n} \right)\to 0 \quad \mbox{ strongly in }L^{2-\epsilon}(\Omega),
\end{align}
for every $\epsilon>0$. Again, by Poincar\'e Lemma one deduces that
\begin{align}\label{cnv.3}
(\sqrt{\rho_A^n\rho_B^n}-1)^5 - \fint_\Omega (\sqrt{\rho_A^n\rho_B^n}-1)^5 \mathrm{d }x\to 0
\quad \mbox{ strongly in }L^{p}(\Omega), \text{ for all } p<\infty .
\end{align}
Since $\rho_A^n$, $\rho_B^n$ are bounded in $L^p(\Omega)$ for every $p<\infty$, then the sequences of real numbers 
$\fint_\Omega(\sqrt{\rho_A^n}+\sqrt{\rho_B^n}) \,\mathrm{d}x$,
$\fint_\Omega (\sqrt{\rho_A^n\rho_B^n}-1)^5 \, \mathrm{d}x$ are bounded in $\R$, therefore  up to subsequences 
\begin{align*}
\fint_\Omega(\sqrt{\rho_A^n}+\sqrt{\rho_B^n}) \d x\to c_1,\quad
\fint_\Omega (\sqrt{\rho_A^n\rho_B^n}-1)^5 \d x\to \tilde c_2 .
\end{align*}
for some suitable constants $c_1$, $\tilde c_2\geq 0$.
From the above relation and \eqref{cnv.2}, \eqref{cnv.3} we get
\begin{align*}
\sqrt{\rho_A^n}+\sqrt{\rho_B^n}\to c_1,\quad
(\sqrt{\rho_A^n\rho_B^n}-1)^5\to \tilde c_2 \quad 
\mbox{ strongly in }L^p(\Omega), \text{ for all } p<\infty.
\end{align*}
In particular $(\sqrt{\rho_A^n\rho_B^n}-1)^5\to \tilde c_2$ a.e.~in $\Omega$. However, being $x\in\R\mapsto (x-1)^5\in\R$ globally invertible with continuous inverse $y\in\R\mapsto (y+1)^{1/5}\in\R$, we deduce that 
$\sqrt{\rho_A^n\rho_B^n}\to c_2 := 1+\tilde c_2^{1/5}$ a.e.~in
$\Omega$. Being $\rho_A^n$, $\rho_B^n$ bounded in $L^p(\Omega)$
for every $p<\infty$, it follows
\begin{align}\label{cnv.4}
	\sqrt{\rho_A^n}+\sqrt{\rho_B^n}\to c_1,\quad
	\sqrt{\rho_A^n\rho_B^n}\to c_2 \quad
	\mbox{ strongly in }L^p(\Omega), \text{ for all } p<\infty .
\end{align}
%However, being $x\in\R\mapsto (x-1)^5\in\R$ globally invertible,
%it follows
%\begin{align}\label{cnv.4}
%\sqrt{\rho_A^n}+\sqrt{\rho_B^n}\to c_1,\quad
%\sqrt{\rho_A^n\rho_B^n}\to c_2 := 1+\tilde c_2^{1/5}
%\mbox{ strongly in }L^p(\Omega), \text{ for all } p<\infty .
%\end{align}
As a consequence
\begin{align*}
\rho_A^n+\rho_B^n = (\sqrt{\rho_A^n}+\sqrt{\rho_B^n})^2 - 2\sqrt{\rho_A^n\rho_B^n}\to c_1^2 - 2 c_2 \quad \mbox{ strongly in }L^p(\Omega),  \text{ for all } p<\infty.
\end{align*}
The above relation and the mass conservation imply
\begin{align}\label{cnv.5}
\rho_A^n+\rho_B^n \to \rho_A^\infty +\rho_B^\infty \quad \mbox{ strongly in }L^p(\Omega), \text{ for all } p<\infty.
\end{align}
Moreover,
\begin{align*}
|\sqrt{\rho_A^n}-\sqrt{\rho_B^n}|^2 = (\sqrt{\rho_A^n}+\sqrt{\rho_B^n})^2 - 4\sqrt{\rho_A^n\rho_B^n}
\to c_1^2 - 4 c_2 \quad \mbox{ strongly in }L^p(\Omega), \text{ for all } p<\infty,
\end{align*}
and so
\begin{align}\label{cnv.6}
|\rho_A^n - \rho_B^n| = (\sqrt{\rho_A^n}+\sqrt{\rho_B^n})|\sqrt{\rho_A^n}-\sqrt{\rho_B^n}|
\to c_3 := c_1\sqrt{c_1^2 - 4 c_2} \quad \mbox{ strongly in }L^p(\Omega), \text{ for all } p<\infty.
\end{align}
In particular, since $2\max\{x,y\} = x + y + |x-y|$, 
$2\min\{x,y\} = x + y - |x-y|$ for every $x,y\geq 0$, it follows
\begin{align}\label{cnv.Mmu}
M^n := \max\{\rho_A^n,\rho_B^n\}\to M,\quad
\mu^n := \min\{\rho_A^n,\rho_B^n\}\to \mu, \quad 
\mbox{ strongly in }L^p(\Omega),\text{ for all } p<\infty ,
\end{align}
and $M$, $\mu$ are constants. Notice that 
$\sqrt{M\mu} = \lim_{n\to\infty}\sqrt{\rho_A^n\rho_B^n}$ a.e.~in $\Omega$.

From \eqref{cnv.a}, \eqref{cnv.AB} it follows
\begin{align*}
\nabla \left[ (\sqrt{\rho_A^n\rho_B^n}-1)^5\rho_i^n \right]\to 0 \quad 
\mbox { strongly in }L^{2-\epsilon}(\Omega),~i \in \{A,B\},
\end{align*} 
that is
\begin{align*}
(\sqrt{\rho_A^n\rho_B^n} -1)^5\rho_i^n\to \theta_i \quad 
\mbox{ strongly in }L^{2-\epsilon}(\Omega),~i \in \{A,B\},
\end{align*}
for some constants $\theta_A$, $\theta_B$.
We distinguish two cases:\medskip\\
{\em \textbf{Case 1}: $M\mu\neq 1$.} 

In this case $\rho_i^n$ is a.e.~convergent in $\Omega$ to a constant which, due to mass conservation and uniform $L^p(\Omega)$ bounds, must be equal to $\rho_i^\infty$. It follows
\begin{align}\label{lt.goal.1}
\rho_i^n\to\rho_i^\infty \quad \mbox{ strongly in }L^q(\Omega),\text{ for all } q<\infty,~i\in \{A,B\}.
\end{align}
%\medskip\\
%
{\em \textbf{Case 2}: $M\mu = 1$.} 

In this case let us observe that
relation \eqref{cnv.1} can be rewritten as
\begin{align}\label{cnv.1b}
\sqrt{\rho_A^n}\nabla\left( \log\rho_A^n + \rho_B^n \right)\to 0,\quad
\sqrt{\rho_B^n}\nabla\left( \log\rho_B^n + \rho_A^n \right)\to 0,  \quad 
\mbox{ strongly in }L^2(\Omega).
\end{align}
Let $\psi$ like in \eqref{def.psi}. Since
\begin{align*}
\nabla(\psi(\sqrt{\rho_A^n\rho_B^n})(\log\rho_A^n + \rho_B^n)) &= 
\psi'(\sqrt{\rho_A^n\rho_B^n})(\log\rho_A^n + \rho_B^n)\nabla\sqrt{\rho_A^n\rho_B^n} + 
\psi(\sqrt{\rho_A^n\rho_B^n})\nabla(\log\rho_A^n + \rho_B^n)\\
&=\sqrt{\rho_B^n}\frac{\psi'(\sqrt{\rho_A^n\rho_B^n})}{\sqrt{\rho_A^n\rho_B^n}(1-\sqrt{\rho_A^n\rho_B^n})}\sqrt{\rho_A^n}
(\log\rho_A^n + \rho_B^n)
(1-\sqrt{\rho_A^n\rho_B^n})\nabla\sqrt{\rho_A^n\rho_B^n}\\
&\quad + \sqrt{\rho_B^n}
\frac{\psi(\sqrt{\rho_A^n\rho_B^n})}{ \sqrt{\rho_A^n\rho_B^n} }
\sqrt{\rho_A^n}\nabla(\log\rho_A^n + \rho_B^n)
\end{align*}
from \eqref{cnv.1}, \eqref{cnv.1b}
it follows that
\begin{align*}
\nabla(\psi(\sqrt{\rho_A^n\rho_B^n})(\log\rho_A^n + \rho_B^n))\to 0 \quad \mbox{ strongly in }L^{2-\epsilon}(\Omega), \text{ for all }\epsilon>0.
\end{align*}
Being $\int_\Omega \psi(\sqrt{\rho_A^n\rho_B^n})(\log\rho_A^n + \rho_B^n) \, \mathrm{d}x$ bounded, we deduce
\begin{align*}
\psi(\sqrt{\rho_A^n\rho_B^n})(\log\rho_A^n + \rho_B^n)\to c_4 \quad 
\mbox{ strongly in }L^{2-\epsilon}(\Omega),\text{ for all }\epsilon>0.
\end{align*}
In a similar way,
\begin{align*}
\psi(\sqrt{\rho_A^n\rho_B^n})(\log\rho_B^n + \rho_A^n)\to c_5 \quad 
\mbox{ strongly in }L^{2-\epsilon}(\Omega),\text{ for all }\epsilon>0.
\end{align*}
In particular, since $\sqrt{\rho_A^n\rho_B^n}\to 1$ a.e.~in $\Omega$, then
\begin{align*}
\log\frac{\rho_A^n}{\rho_B^n} + \rho_B^n - \rho_A^n\to c_6 := c_4 - c_5\quad\mbox{a.e.~in }\Omega.
\end{align*}
Let $\sigma^n = \frac{\rho_A^n - \rho_B^n}{|\rho_A^n - \rho_B^n|}$
on $\{M^n > \mu^n\}$, $\sigma^n = 0$ on $\{\rho_A^n = \rho_B^n\}$.
We just proved
\begin{align*}
\sigma^n\left( \log\frac{M^n}{\mu^n} + \mu^n - M^n \right)\to c_6 \quad\mbox{a.e.~in }\Omega.
\end{align*}
However, we know from \eqref{cnv.Mmu} that $\log\frac{M^n}{\mu^n} + \mu^n - M^n \to \log\frac{M}{\mu} + \mu - M$ a.e.~in $\Omega$,
with $M$, $\mu$ constants such that
$\sqrt{M\mu} = 1$, so
\begin{align*}
\log\frac{M^n}{\mu^n} + \mu^n - M^n \to 
2\log M + \frac{1}{M} - M\quad\mbox{a.e.~in }\Omega.
\end{align*}
Since the function $x\in (0,\infty)\mapsto 2\log x + \frac{1}{x} - x\in\R$ is one-to-one (strictly decreasing), it vanishes only at $x=1$. If $M=\mu=1$ then $\rho_A^n - \rho_B^n\to 0$ a.e.~in $\Omega$
and so \eqref{lt.goal.1} holds. Let us therefore assume $M>\mu$.
In this case
\begin{align*}
\sigma^n\to c_7 :=
c_6\left( 2\log M + \frac{1}{M} - M \right)^{-1}
 \quad\mbox{a.e.~in }\Omega .
\end{align*}
It follows that $\rho_A^n - \rho_B^n = \sigma^n(M^n - \mu^n)$
is a.e.~convergent in $\Omega$ towards a constant, i.e.~\eqref{lt.goal.1} holds.
The a.e.~convergence $\rho^n\to\rho^\infty$ in $\Omega$ and the continuity of $\rho\in\R^2_+\mapsto h^*(\rho\mid\rho^\infty)\in\R$
imply that $h^*(\rho^n\mid\rho^\infty)\to 0$ a.e.~in $\Omega$,
while the uniform $L^p(\Omega)$ bound for $\rho^n$, valid for every $p<\infty$, implies that $h^*(\rho^n\mid\rho^\infty)$
is bounded (at least) in $L^2(\Omega)$. It follows that 
$h^*(\rho^n\mid\rho^\infty)\to 0$ strongly in $L^1(\Omega)$,
that is (given the definition of $\en(\rho\mid\rho^\infty)$ and $\rho^n = \rho(\cdot,t_n)$) $\lim_{n\to\infty}\en(\rho(t_n)\mid\rho^\infty)=0$. However, since $t\mapsto\en(\rho(t)\mid\rho^\infty)$ is nonincreasing in time, we conclude that $\lim_{t\to\infty}\en(\rho(t)\mid\rho^\infty)=\lim_{n\to\infty}\en(\rho(t_n)\mid\rho^\infty)=0$.
\medskip\\
{\bf Step 2: Show that $\lim_{t\to\infty}\rho(t)=\rho^\infty$ strongly in $L^1(\Omega)$.} 

Assume $\rho_A^\infty$, $\rho_B^\infty\leq 1$. 

We aim to prove that there exists $R>0,~ \gamma>0 $ such that 
\begin{align}\label{claim.h.1}
	\mbox{if $\rho_A+\rho_B\geq R$ then }
	h^*(\rho\mid\rho^\infty) \geq \gamma (\rho_A+\rho_B),\\
	\label{claim.h.2}
	h^*(\rho\mid\rho^\infty) > 0\quad\text{for all }\rho\in\R^2_+\backslash\{\rho^\infty\}.
\end{align}
This strategy is justified by the following
\begin{claim}\label{claim.h}
	If \eqref{claim.h.1}, \eqref{claim.h.2} hold, 
	then $\lim_{t\to\infty}\rho(t)=\rho^\infty$ strongly in $L^1(\Omega)$. 
\end{claim}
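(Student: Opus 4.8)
The plan is to upgrade the entropy decay $\en(\rho(t)\mid\rho^\infty)\to 0$ obtained in Step~1 to strong $L^1$ convergence by exploiting the coercivity and positivity encoded in the hypotheses \eqref{claim.h.1}--\eqref{claim.h.2}. Write $P=\rho_A+\rho_B$ and note first that $h^*(\rho\mid\rho^\infty)\ge 0$: it vanishes at $\rho=\rho^\infty$ and is strictly positive elsewhere by \eqref{claim.h.2}, so $\en(\rho(t)\mid\rho^\infty)=\int_\Omega h^*(\rho(t)\mid\rho^\infty)\,dx$ is a nonnegative quantity decaying to $0$. The strategy is to bound $\|\rho(t)-\rho^\infty\|_{L^1(\Omega)}$ by decomposing $\Omega$ into a high-density tail $\{P\ge R\}$ and a bounded bulk $\{P<R\}$, splitting the latter according to whether $\rho$ is near or far from $\rho^\infty$, and using that $|\rho-\rho^\infty|\le P+|\rho^\infty|$ since $\rho_A,\rho_B\ge 0$.

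On the tail $A_t=\{x\in\Omega:P(t,x)\ge R\}$ the coercive bound \eqref{claim.h.1} gives $P\le\gamma^{-1}h^*(\rho\mid\rho^\infty)$, whence
\begin{align*}
\int_{A_t} P\,dx \le \frac{1}{\gamma}\int_{A_t} h^*(\rho\mid\rho^\infty)\,dx \le \frac{1}{\gamma}\,\en(\rho(t)\mid\rho^\infty)\to 0 \quad\text{as } t\to\infty .
\end{align*}
Since $P\ge R$ on $A_t$, this forces also $|A_t|\le R^{-1}\int_{A_t}P\,dx\to0$, so the tail contribution is controlled by $\int_{A_t}|\rho-\rho^\infty|\,dx\le\int_{A_t}P\,dx+|\rho^\infty|\,|A_t|\to0$.

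For the bulk, fix $\eta>0$ and set $m(\eta)=\inf\{h^*(\rho\mid\rho^\infty):\rho\in\R^2_+,\ |\rho-\rho^\infty|\ge\eta,\ \rho_A+\rho_B\le R\}$. The constraint set is closed and bounded, hence compact, $h^*$ is continuous, and it is strictly positive there by \eqref{claim.h.2}, so $m(\eta)>0$. On the bad set $B_t=\{P<R\}\cap\{|\rho-\rho^\infty|\ge\eta\}$ we have $h^*\ge m(\eta)$, so $|B_t|\le m(\eta)^{-1}\en(\rho(t)\mid\rho^\infty)\to0$; as $|\rho-\rho^\infty|\le R+|\rho^\infty|$ there, its contribution $\int_{B_t}|\rho-\rho^\infty|\,dx\le(R+|\rho^\infty|)|B_t|\to0$. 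On the remaining good set $\{P<R\}\cap\{|\rho-\rho^\infty|<\eta\}$ the integrand is below $\eta$, contributing at most $\eta|\Omega|$. Combining the three pieces yields $\limsup_{t\to\infty}\|\rho(t)-\rho^\infty\|_{L^1(\Omega)}\le\eta|\Omega|$, and since $\eta>0$ is arbitrary the claim follows.

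The argument is at heart a measure-theoretic decomposition, so the only delicate point is the tail: the $L^1$ smallness of $h^*$ alone would not rule out a vanishingly small but very high-density region carrying order-one mass of $\rho$. The coercive bound \eqref{claim.h.1} is precisely what supplies the uniform integrability needed to discard this possibility, and it is the step I would treat most carefully; the positivity \eqref{claim.h.2} then handles the bounded bulk by a routine compactness/continuity argument.
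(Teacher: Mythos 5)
Your proof is correct, and it takes a route that differs from the paper's in the key final step. The paper first combines \eqref{claim.h.1} and \eqref{claim.h.2} into a single uniform-positivity statement (for every $\epsilon>0$ there is $C_\epsilon>0$ with $h^*(\rho\mid\rho^\infty)\geq C_\epsilon$ whenever $|\rho-\rho^\infty|\geq\epsilon$), deduces from the entropy decay that $\rho(t_n)\to\rho^\infty$ \emph{in measure}, and then upgrades convergence in measure to strong $L^1$ convergence by a Vitali/dominated-convergence argument that relies on the uniform $L^1$ bound for $\rho_i\log\rho_i$ coming from the integrated energy balance \eqref{ieb.zeta} — an ingredient imported from the definition of weak solution, strictly beyond the two hypotheses named in the claim. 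You instead use the full strength of the linear coercivity in \eqref{claim.h.1}: on the tail $\{\rho_A+\rho_B\geq R\}$ it gives $\int_{\{P\geq R\}}P\,\mathrm{d}x\leq\gamma^{-1}\en(\rho(t)\mid\rho^\infty)\to 0$, which kills the tail mass directly, while the compactness/continuity argument on the bounded bulk (your $m(\eta)>0$) plays exactly the role of the paper's $C_\epsilon$. The net effect is that your proof establishes the claim using only \eqref{claim.h.1}, \eqref{claim.h.2}, the conclusion of Step 1, and boundedness of $\Omega$, so it is self-contained with respect to the claim as stated; the paper's argument is more modular (convergence in measure plus equi-integrability) and would survive if \eqref{claim.h.1} were weakened to mere uniform positivity at infinity, at the price of invoking the extra entropy bound. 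Both are complete; yours makes the observation, worth keeping, that the linear growth in \eqref{claim.h.1} is itself a de la Vallée-Poussin-type condition supplying the needed uniform integrability.
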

\begin{proof}[Proof of Claim \ref{claim.h}]
	In fact, being $\rho\in\R^2\mapsto h^*(\rho\mid\rho^\infty)\in\R$
	continuous, \eqref{claim.h.2} implies that, for every $\epsilon>0$, $h^*(\cdot\mid\rho^\infty)$ is uniformly positive 
	on the compact set $\mathcal{O}_1\equiv\{\rho\in\R^2_+ : ~\rho_A + \rho_B\leq R,~ |\rho-\rho^\infty|\geq\epsilon \}$, while \eqref{claim.h.1} implies
	that $h^*(\cdot\mid\rho^\infty)$ is uniformly positive 
	on $\mathcal{O}_2\equiv\{\rho\in\R^2_+ : ~\rho_A + \rho_B > R,~ |\rho-\rho^\infty|\geq\epsilon \}$. It follows that $h^*(\cdot\mid\rho^\infty)$ is uniformly positive on $\mathcal{O}\equiv\mathcal{O}_1\cup\mathcal{O}_2$, that is,
	\begin{align}\label{meas.conv.h}
		\forall\epsilon>0,~~\exists C_\epsilon>0:~~\rho\in\R^2,~|\rho-\rho^\infty|\geq\epsilon ~~\Rightarrow~~ h^*(\cdot\mid\rho^\infty)\geq C_\epsilon .
	\end{align}
	Given an arbitrary sequence $t_n\to\infty$, from Step 1 we know that $0=\lim_{n\to\infty}\en(\rho(t_n)\mid\rho^\infty)=
	\lim_{n\to\infty}\int_\Omega h^*(\rho(t_n)\mid\rho^\infty)dx$. Since $h^*(\rho\mid\rho^\infty)\geq 0$ a.e.~in $\Omega$ (consequence of \eqref{claim.h.2}), property \eqref{meas.conv.h} implies for $\epsilon>0$ arbitrary
	\begin{align*}
		C_\epsilon\mbox{meas}\{ |\rho(t_n)-\rho^\infty|\geq\epsilon\}\leq 
		\int_{ \{ |\rho(t_n)-\rho^\infty|\geq\epsilon\}} h^*(\rho(t_n)\mid\rho^\infty)\d x\leq 
		\int_\Omega h^*(\rho(t_n)\mid\rho^\infty)\d x\to 0
	\end{align*}
	as $n\to\infty$. This means that $\rho(t_n)\to\rho^\infty$ in measure in $\Omega$ as $n\to\infty$. Given $\rho_A(t_n)\log\rho_A(t_n)$, $\rho_B(t_n)\log\rho_B(t_n)$
	are bounded in $L^1(\Omega)$ thanks to \eqref{ieb.zeta},
	we deduce via dominated convergence that $\rho(t_n)\to\rho^\infty$ strongly in $L^1(\Omega)$. Being the sequence $t_n\to\infty$ arbitrary, we conclude that
	$\rho(t)\to\rho^\infty$ strongly in $L^1(\Omega)$ as $t\to\infty$.
\end{proof}

Let us start with \eqref{claim.h.1}. Let $R>0$ to be fixed later.
Let $\rho\in\R^2_+$ with $\rho_A + \rho_B\geq R$. It holds
\begin{align*}
	\frac{h^*(\rho\mid\rho^\infty)}{\rho_A + \rho_B} &= 
	\frac{\rho_A}{\rho_A + \rho_B}\log\left( \frac{\rho_A}{\rho_A^\infty} \right) 
	+\frac{\rho_B}{\rho_A + \rho_B}\log\left( \frac{\rho_B}{\rho_B^\infty} \right) 
	+ \frac{\rho_A\rho_B}{\rho_A + \rho_B}\\
	&+\frac{\rho_A^\infty - \rho_A + \rho_B^\infty - \rho_B}{\rho_A + \rho_B} 
	+ \frac{\rho_A^\infty\rho_B^\infty - \rho_A\rho_B^\infty
		- \rho_B\rho_A^\infty }{\rho_A + \rho_B}.
\end{align*}
Writing
\begin{align*}
	\log\left( \frac{\rho_i}{\rho_i^\infty} \right) = 
	\log\left( \rho_A + \rho_B \right) +
	\log\left( \frac{\rho_i}{\rho_i^\infty(\rho_A + \rho_B)} \right),
	\quad i \in \{ A,B \},
\end{align*}
and exploiting the fact that $\frac{\rho_A\rho_B}{\rho_A + \rho_B}\geq 0$ we obtain
\begin{align*}
	\frac{h^*(\rho\mid\rho^\infty)}{\rho_A + \rho_B} &\geq
	\log\left( \rho_A + \rho_B \right) + \Xi,
\end{align*}
where we defined
\begin{align*}
	\Xi &\equiv \frac{\rho_A}{\rho_A + \rho_B}\log\left( \frac{\rho_A}{\rho_A^\infty(\rho_A + \rho_B)} \right) 
	+\frac{\rho_B}{\rho_A + \rho_B}\log\left( \frac{\rho_B}{\rho_B^\infty(\rho_A + \rho_B)} \right) \\
	&+\frac{\rho_A^\infty - \rho_A + \rho_B^\infty - \rho_B}{\rho_A + \rho_B} 
	+ \frac{\rho_A^\infty\rho_B^\infty - \rho_A\rho_B^\infty
		- \rho_B\rho_A^\infty }{\rho_A + \rho_B},
\end{align*}
which is clearly bounded. Choosing $R>0$ large enough then yields
\eqref{claim.h.1}.

%We aim to prove that there exists $R>0, \gamma>0 $ such that if $\rho_A+\rho_B\geq R$ then 
%\begin{align}\label{claim.h.1}
% h^*(\rho\mid\rho^\infty) &\geq \gamma (\rho_A+\rho_B),\\
%\label{claim.h.2}
%h^*(\rho\mid\rho^\infty) &> 0\quad\text{for all }\rho\in\R^2_+\backslash\{\rho^\infty\}.
%\end{align}
%If \eqref{claim.h.1}, \eqref{claim.h.2} hold, 
%then from Step 1 one easily concludes that
%$\lim_{t\to\infty}\rho(t)=\rho^\infty$ strongly in $L^1(\Omega)$. 
%
%Let us start with \eqref{claim.h.1}. We have
%\begin{align*}
%& h^*(\rho\mid\rho^\infty) = 
%\rho_A\log\left( \frac{\rho_A}{\rho_A^\infty} \right) + \rho_A^\infty - \rho_A
%+\rho_B\log\left( \frac{\rho_B}{\rho_B^\infty} \right) + \rho_B^\infty - \rho_B
%+(\rho_A - \rho_A^\infty)(\rho_B - \rho_B^\infty)\\
%&=(\rho_A + \rho_B)\log(\rho_A + \rho_B) + \rho_A\rho_B + \left[
%\frac{\rho_A}{\rho_A + \rho_B}\log\left(\frac{\rho_A}{\rho_A + \rho_B}\right)
%+\frac{\rho_B}{\rho_A + \rho_B}\log\left(\frac{\rho_B}{\rho_A + \rho_B}\right)
%\right.\\ 
%&\left. \qquad -(1+\rho_B^\infty)\frac{\rho_A}{\rho_A + \rho_B} -(1+\rho_A^\infty)\frac{\rho_B}{\rho_A + \rho_B} \right](\rho_A + \rho_B)
%+\rho_A^\infty + \rho_B^\infty + \rho_A^\infty\rho_B^\infty ,
%\end{align*}
%from which \eqref{claim.h.1} follows straightforwardly (as all the terms in the square brackets are bounded).

Let us now show \eqref{claim.h.2}. We begin by proving that $h^*(\rho\mid\rho^\infty) > 0$ for $\min\{\rho_A , \rho_B\}=0$. Since $h^*(0\mid\rho^\infty) > 0$ trivially, let us consider the case $\rho_A=0$, $\rho_B>0$ (the complementary case $\rho_B=0$, $\rho_A>0$ is treated in an analogous way). We need to study the function
\begin{align*}
f(\rho_B) \equiv h^*(\rho\mid\rho^\infty)\vert_{\rho_A=0} = 
\rho_A^\infty + \rho_B\log\left( \frac{\rho_B}{\rho_B^\infty} \right) + \rho_B^\infty - \rho_B - \rho_A^\infty(\rho_B - \rho_B^\infty),\quad \rho_B>0.
\end{align*}
Clearly $f$ is a convex function that is positive for $\rho_B=0$ and tends to infinity when $\rho_B\to\infty$. Its point of absolute minimum is
$\rho_B = \rho_B^\infty\exp(\rho_A^\infty)$, which means
\begin{align*}
f(\rho_B) \geq f(\rho_B^\infty\exp(\rho_A^\infty)) = \rho_A^\infty + \rho_B^\infty\left( 1 + \rho_A^\infty  - \exp(\rho_A^\infty) \right),
\quad \rho_B\geq 0.
\end{align*}
The function $s\in [0,1]\mapsto \exp(s) - 1 - 2s\in\R $ is strictly convex, vanishes at zero and equals $e-3<0$ at $1$. It follows that it is negative in $(0,1]$, that is, $\exp(s)<1+2s$ for $0<s\leq 1$. We deduce
\begin{align*}
f(\rho_B) \geq f(\rho_B^\infty\exp(\rho_A^\infty)) > \rho_A^\infty(1-\rho_B^\infty) \geq 0,\quad \rho_B\geq 0.
\end{align*}
Therefore, $h^*(\rho\mid\rho^\infty) > 0$ for $\min\{\rho_A , \rho_B\}=0$.

Let us assume by contradiction that a point $\rho'\in \R^2_+\backslash\{ \rho^\infty \}$ exists such that $h^*(\rho'\mid\rho^\infty)\leq 0$.
From \eqref{claim.h.1} we deduce that $h^*(\rho\mid\rho^\infty)>0$ for 
$\rho_A+\rho_B\geq R$, so $\rho_A' + \rho_B' < R$. Furthermore, since
$h^*(\rho\mid\rho^\infty) > 0$ for $\min\{\rho_A , \rho_B\}=0$, it follows that
$\rho_A'>0$ as $\rho_B' > 0$. We deduce that the function $\rho\mapsto h^*(\rho\mid\rho^\infty)$ achieves local minimum inside the open region 
$\{ \rho_A > 0, \rho_B>0, \rho_A + \rho_B < R \}$ in a point $\tilde{\rho}\neq\rho^\infty$;
in particular $Dh^*(\tilde{\rho}\mid\rho^\infty) = 0$, i.e.~$Dh(\tilde{\rho}) = Dh(\rho^\infty)$. Let us now show that the only solution $\rho\in (0,\infty)^2$
to $Dh(\rho)=Dh(\rho^\infty)$ is $\rho=\rho^\infty$. The equation rewrites as
\begin{align*}
\log\rho_A + \rho_B = \log\rho_A^\infty + \rho_B^\infty,\quad
\log\rho_B + \rho_A = \log\rho_B^\infty + \rho_A^\infty,
\end{align*}
which leads to
\begin{align*}
\rho_A = \rho_A^\infty\exp(\rho_B^\infty - \rho_B),\quad
g(\rho_B) = \log\rho_B^\infty + \rho_A^\infty,\quad
g(s) \equiv \log(s) + \rho_A^\infty\exp(\rho_B^\infty - s).
\end{align*}
Since $\rho_A^\infty\leq 1$ and $\rho_B^\infty\leq 1$ it holds
\begin{align*}
g'(s) = \frac{1}{s} - \rho_A^\infty\exp(\rho_B^\infty - s) \geq 
\frac{1 - s\exp(1-s)}{s}>0\quad\mbox{for }s > 0,~~s\neq 1,
\end{align*}
since $s\mapsto s\exp(1-s)$ achieves its strict maximum as $s=1$. This means that $g$ is strictly increasing and therefore the equation $g(\rho_B) = \log\rho_B^\infty + \rho_A^\infty$ has exactly one solution (i.e.~$\rho_B = \rho_B^\infty$). 

We conclude that \eqref{claim.h.2} holds.
This finishes the proof.	
\end{proof}

\section{Numerical results}\label{S:NR}

In this section we present numerical simulations illustrating Theorem \ref{thm:ss_conv}. In Figure \ref{fig:oc} panel (c) we observe the long-term solutions to system \eqref{E:system}-\eqref{E:1_BC} with initial data $\rho_A(0,x)= .5+e^{-(x-1)^2}$ and $\rho_B(0,x)= .1+e^{-(x+1)^2}$ which is seen in panel (a).  As expected from Theorem \ref{thm:ss_conv}, the solutions converge to the constant equilibrium solutions. 
Note that $\rho_A$ has an initial mass that is larger than the mass of $\rho_B$ and thus the constant equilibriums solution observed at time $t=500$ is larger.  

Figure \ref{fig:oc1} illustrates the time evolution of the two energy functionals.  We observe that they both seem to stabilize at a minimum 
by the time $t=50$.  
\begin{figure}[H]
  \center
%      \subfloat[$t=0$]{\label{fig:no_env}\includegraphics[width=0.5\textwidth]{./images/m1_chi10_t0.jpg}}\\
         \subfloat[Initial Densities]{\label{fig:1}\includegraphics[width=0.3\textwidth]{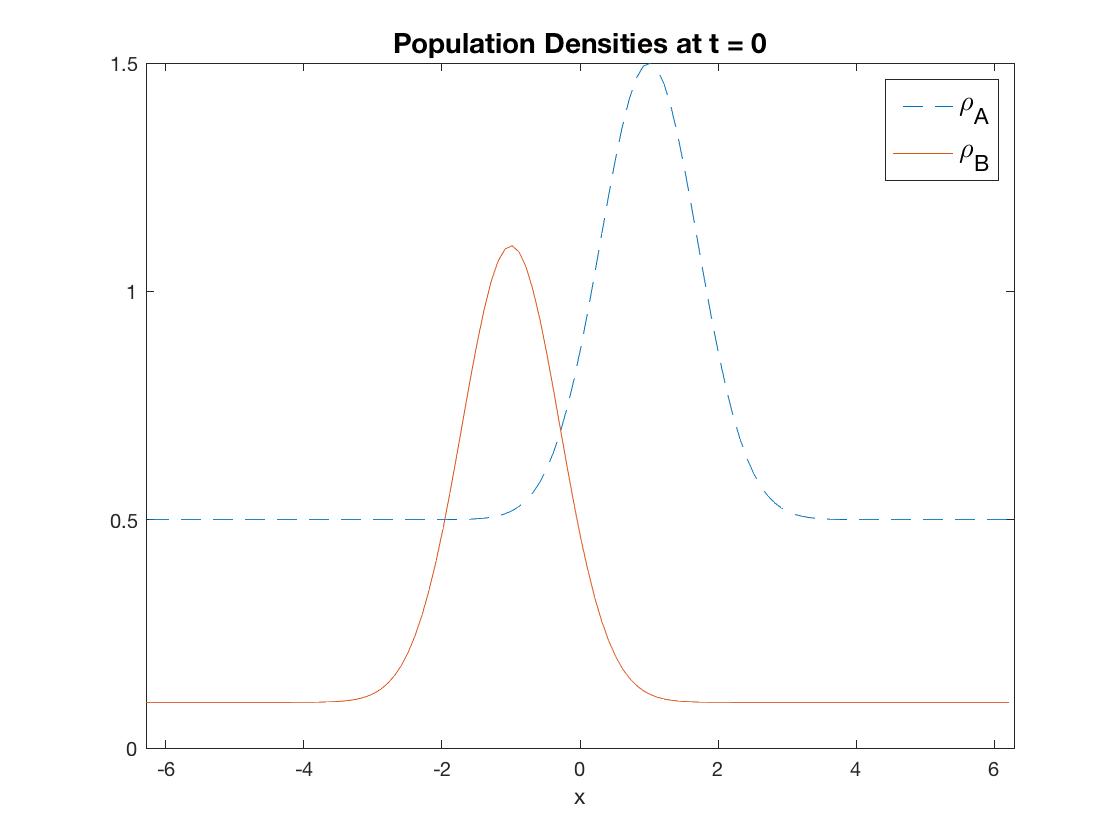}}\;
\subfloat[Densities at $t = 1.245$]{\label{fig:2}\includegraphics[width=0.3\textwidth]{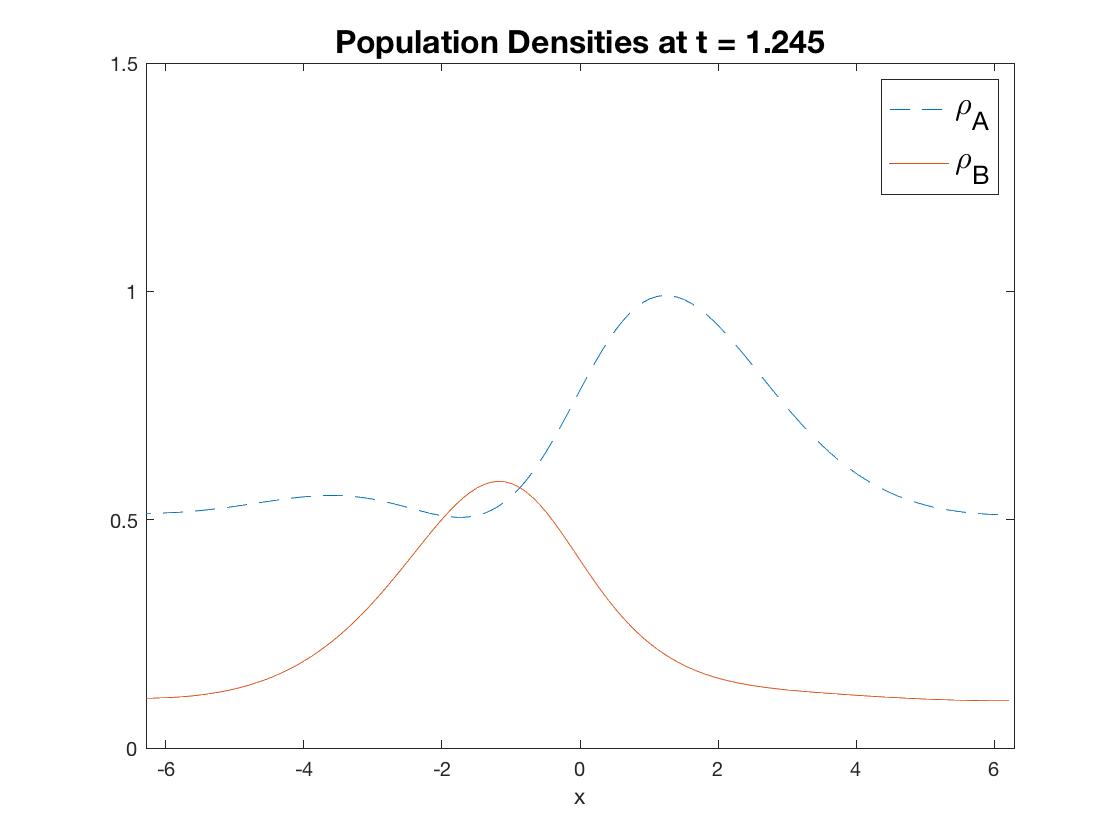}}\;
\subfloat[Densities at $t = 500$]{\label{fig:3}\includegraphics[width=0.3\textwidth]{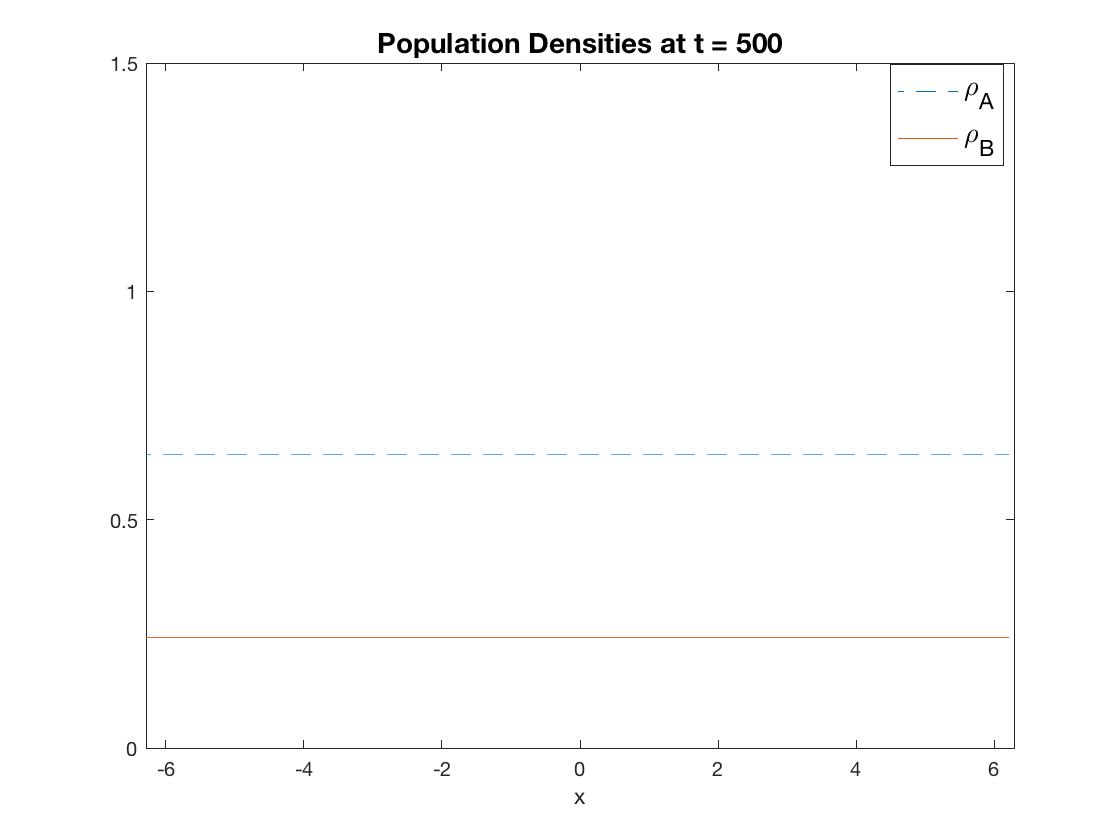}}
%  \subfloat[$t = 9.95$]{\label{fig:4}\includegraphics[width=0.45\textwidth]{./images/m3_chi10_t9p95.jpg}}
   \caption{Numerical solutions to system \eqref{E:system}-\eqref{E:1_BC} with initial densities given by $\rho_A(0,x)= .5+e^{-(x-1)^2}$ and $\rho_B(0,x)= .1+e^{-(x+1)^2}.$
   Panel (b) illustrates transient dynamics and panel (c) the long-time behavior of the solution.}   \label{fig:oc}
\end{figure}

\begin{figure}[H]
  \center
%      \subfloat[$t=0$]{\label{fig:no_env}\includegraphics[width=0.5\textwidth]{./images/m1_chi10_t0.jpg}}\\
                  \subfloat[Natural Energy]{\label{fig:1}\includegraphics[width=0.4\textwidth]{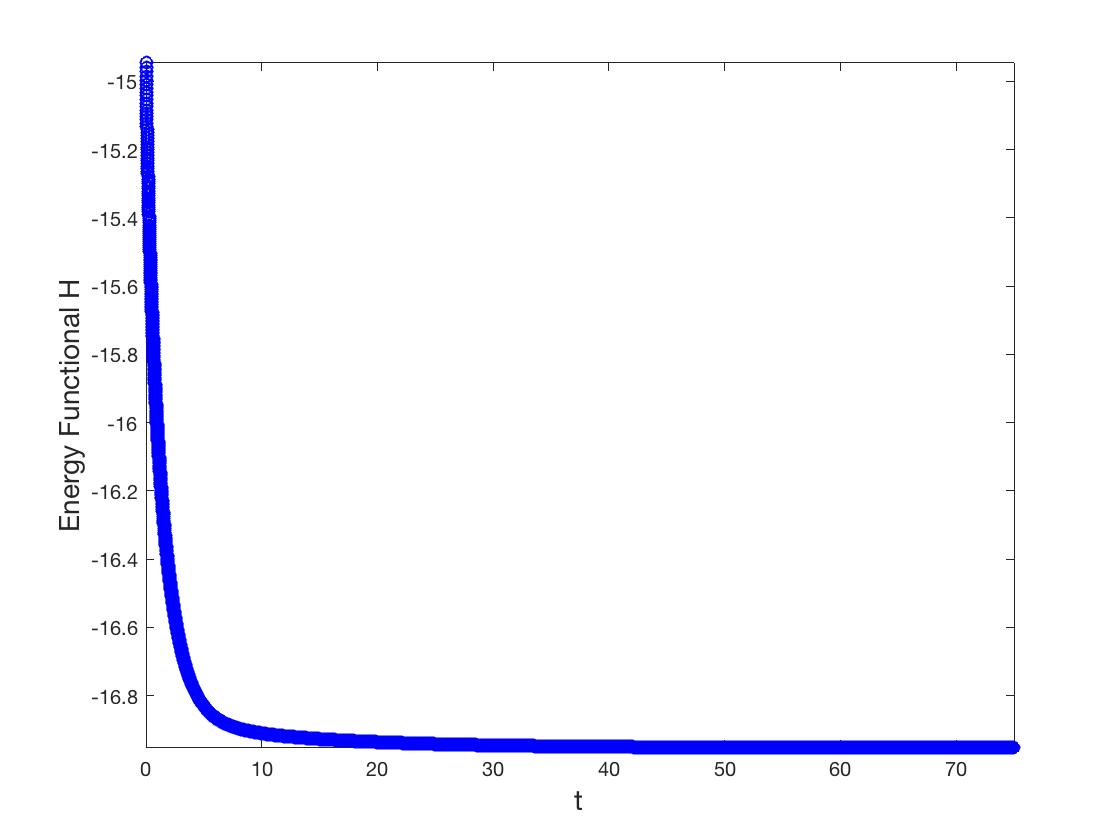}}\;
          \subfloat[Maxwell-Boltzmann Energy]{\label{fig:2}\includegraphics[width=0.4\textwidth]{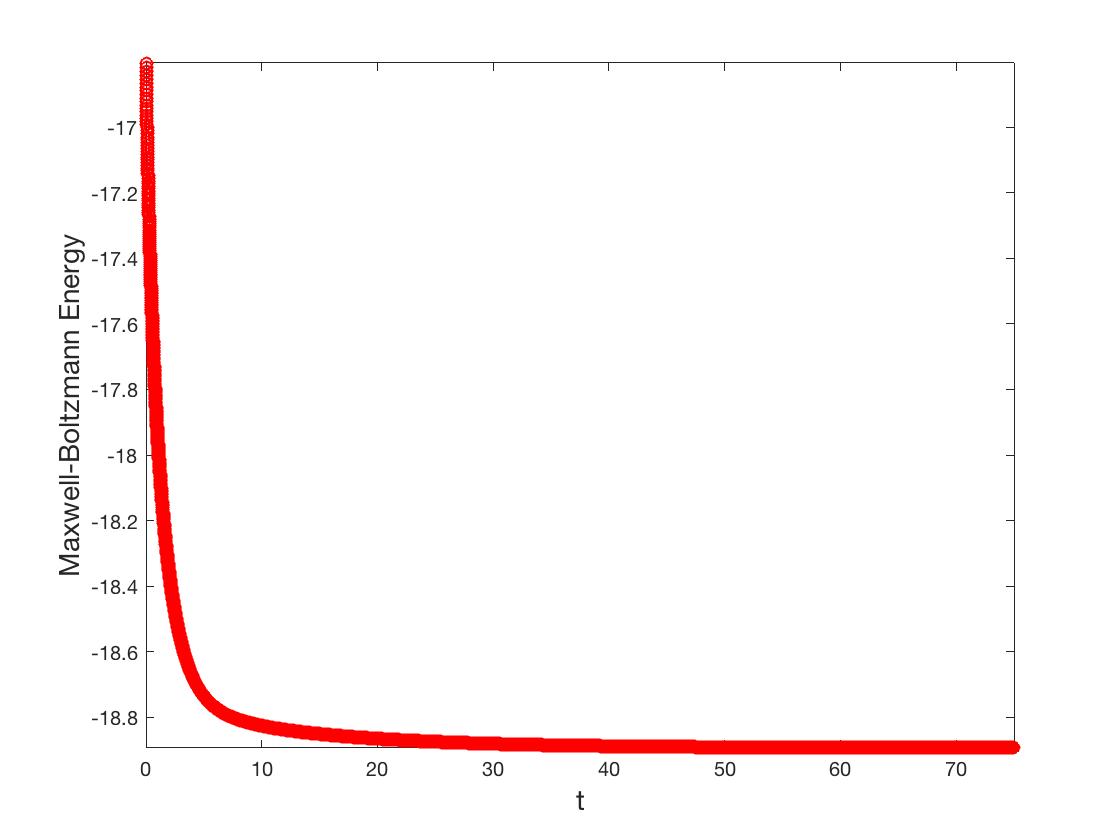}}\;
%  \subfloat[Densities at $t = 500$]{\label{fig:3}\includegraphics[width=0.3\textwidth]{./images/one_dim_decay_t500}}
%  \subfloat[$t = 9.95$]{\label{fig:4}\includegraphics[width=0.45\textwidth]{./images/m3_chi10_t9p95.jpg}}
   \caption{Energy decay with time}   \label{fig:oc1}
\end{figure}

Figure \ref{fig:oc3} illustrates similar results as discussed above.  A difference is that the initial densities 
have a similar mass such that $\rho_A(0,x)= .5+e^{-(x-1)^2}$ and $\rho_B(0,x)= .5+e^{-(x+1)^2}$ as observed in panel (a) of Figure \ref{fig:oc3}.  Thus, the final states of the densities are the same, as seen in panel (c). 
\begin{figure}[H]
  \center
%      \subfloat[$t=0$]{\label{fig:no_env}\includegraphics[width=0.5\textwidth]{./images/m1_chi10_t0.jpg}}\\
                    \subfloat[Initial Densities]{\label{fig:1}\includegraphics[width=0.3\textwidth]{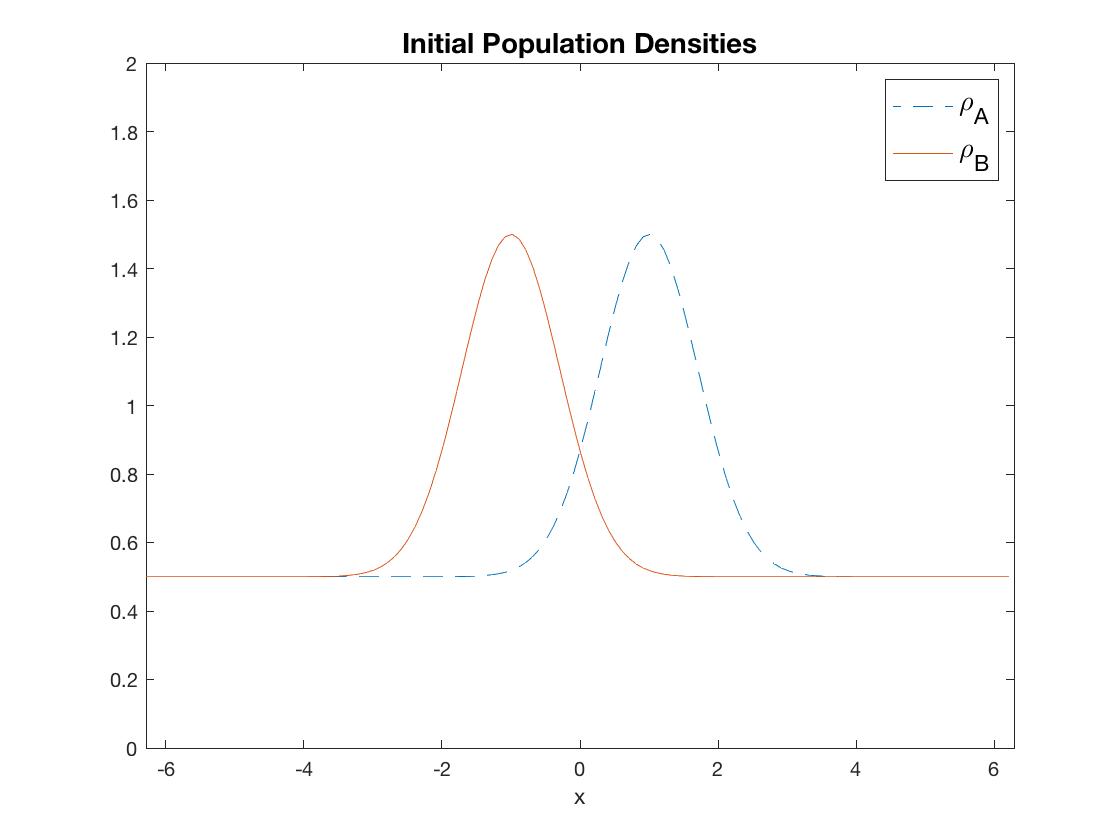}}\;
          \subfloat[Densities at $t = 1.245$]{\label{fig:2}\includegraphics[width=0.3\textwidth]{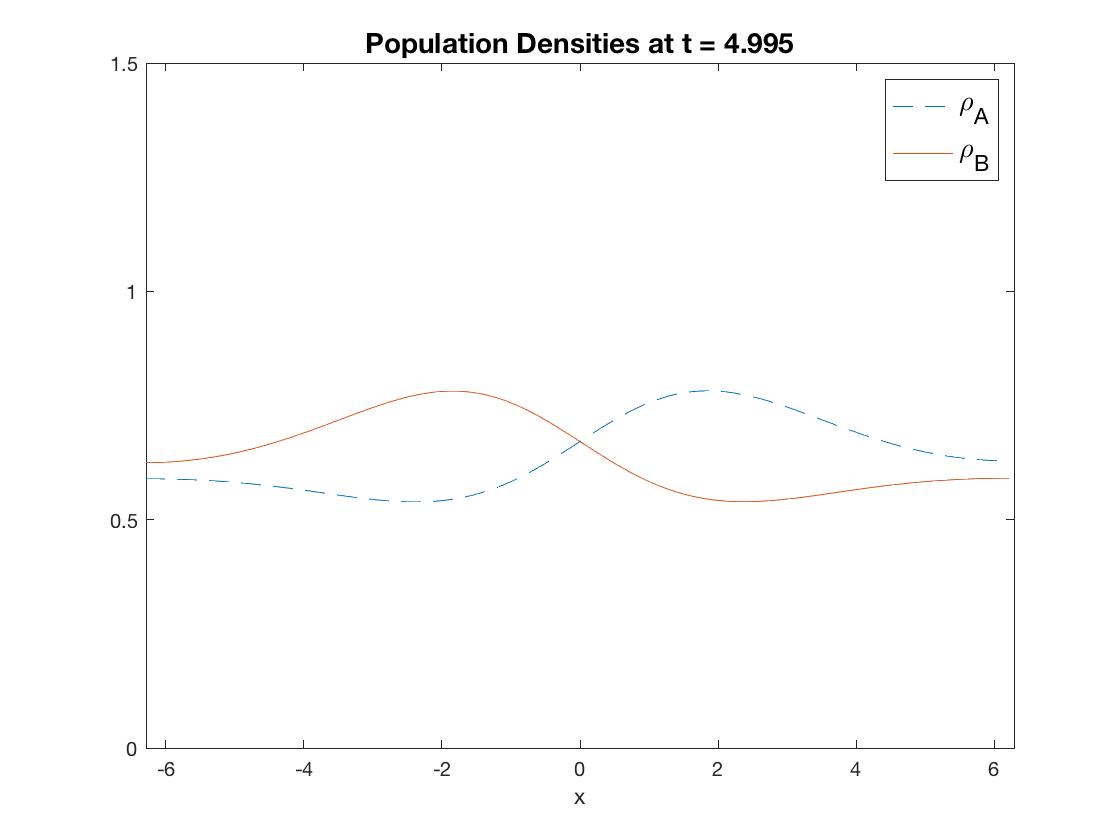}}\;
          \subfloat[Densities at $t = 500$]{\label{fig:3}\includegraphics[width=0.3\textwidth]{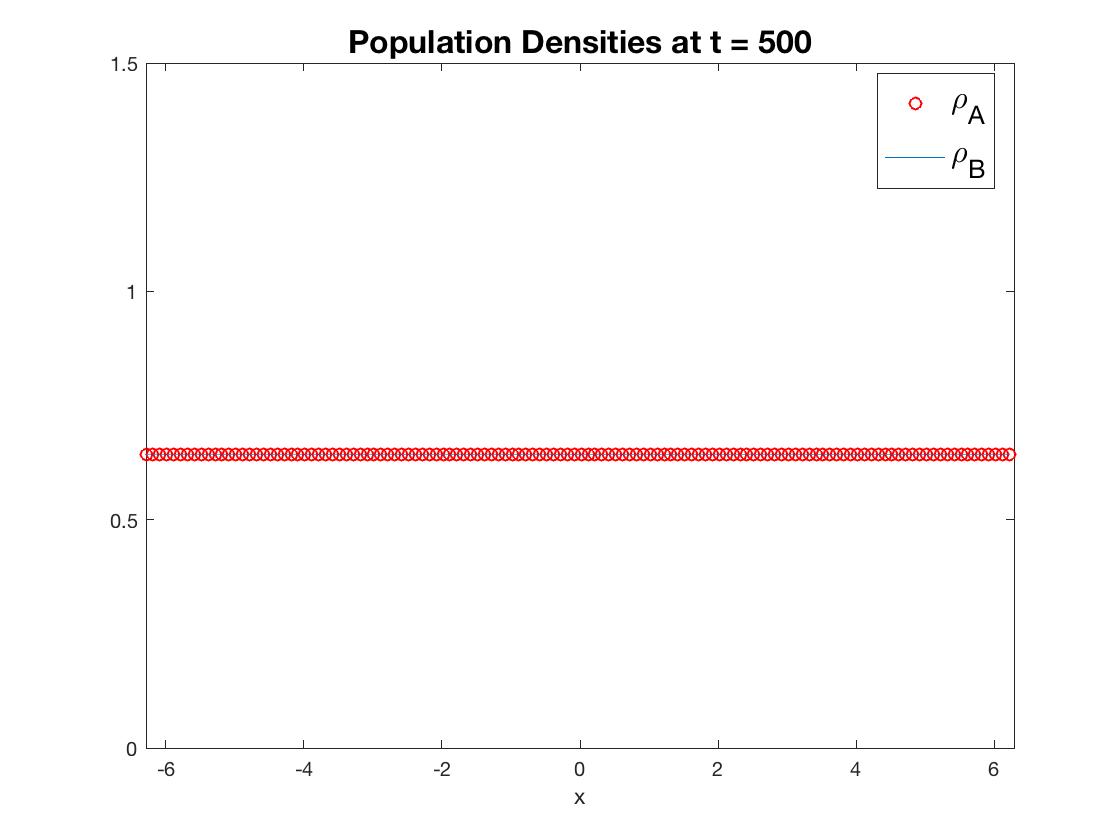}}
%  \subfloat[$t = 9.95$]{\label{fig:4}\includegraphics[width=0.45\textwidth]{./images/m3_chi10_t9p95.jpg}}
   \caption{Numerical solutions with initial densities $\rho_A(0,x)= .5+e^{-(x-1)^2}$ and $\rho_B(0,x)= .5+e^{-(x+1)^2}$}   \label{fig:oc3}
\end{figure}

%
%\begin{figure}[H]
%  \center
%%      \subfloat[$t=0$]{\label{fig:no_env}\includegraphics[width=0.5\textwidth]{./images/m1_chi10_t0.jpg}}\\
%          \subfloat[Natural Energy]{\label{fig:1}\includegraphics[width=0.45\textwidth]{./images/Entropy_H.jpg}}\;
%     \subfloat[Maxwell-Boltzmann Energy]{\label{fig:2}\includegraphics[width=0.45\textwidth]{./images/MaxwellBoltzmann.jpg}}\;
%%  \subfloat[Densities at $t = 500$]{\label{fig:3}\includegraphics[width=0.3\textwidth]{./images/one_dim_decay_t500}}
%%  \subfloat[$t = 9.95$]{\label{fig:4}\includegraphics[width=0.45\textwidth]{./images/m3_chi10_t9p95.jpg}}
%   \caption{Energy decay with time}   \label{fig:oc4}
%\end{figure}

Finally, we illustrate a result in two-dimensions in Figure \ref{fig:oc5}.  These results are consistent with Theorem  \ref{thm:ss_conv}. 
On a final note, the numerical schemes seem to break when initial densities have large mass. 

\begin{figure}[H]
  \center
  \subfloat[$\rho_A(0,x)$]{\label{fig:1}\includegraphics[width=0.32\textwidth]{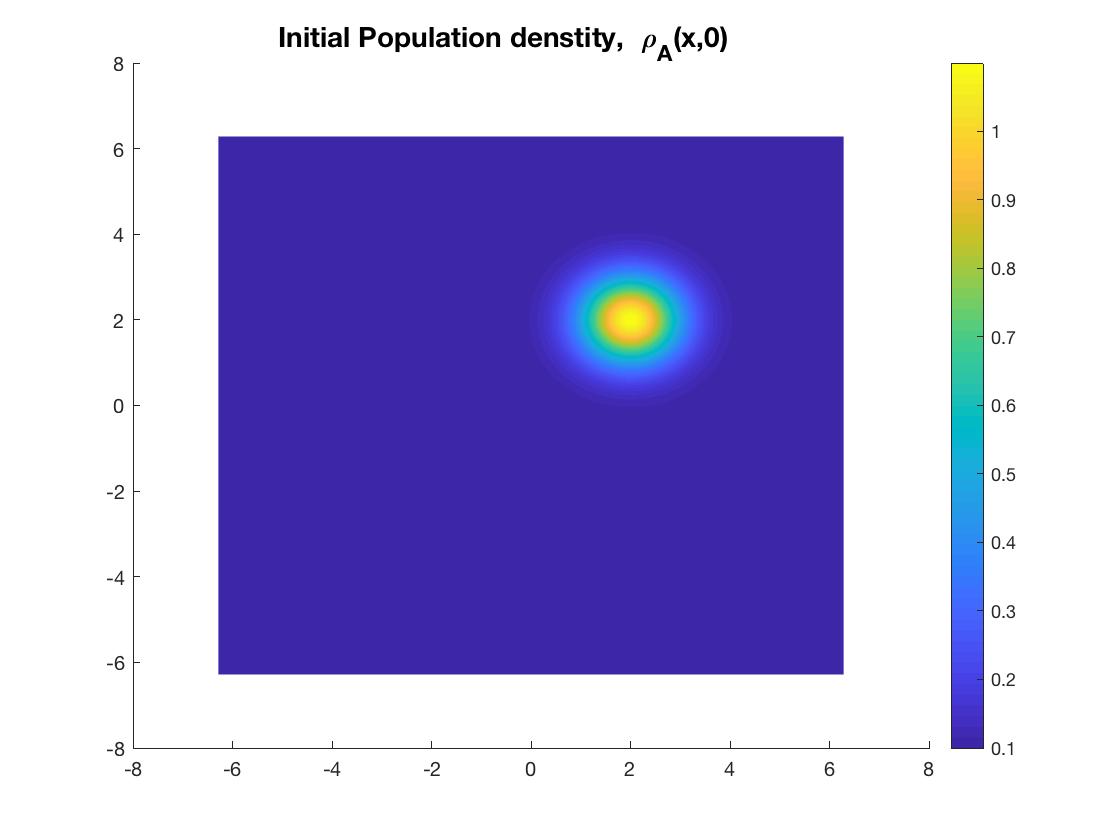}}\;
\subfloat[$\rho_A(1,x)$]{\label{fig:2}\includegraphics[width=0.32\textwidth]{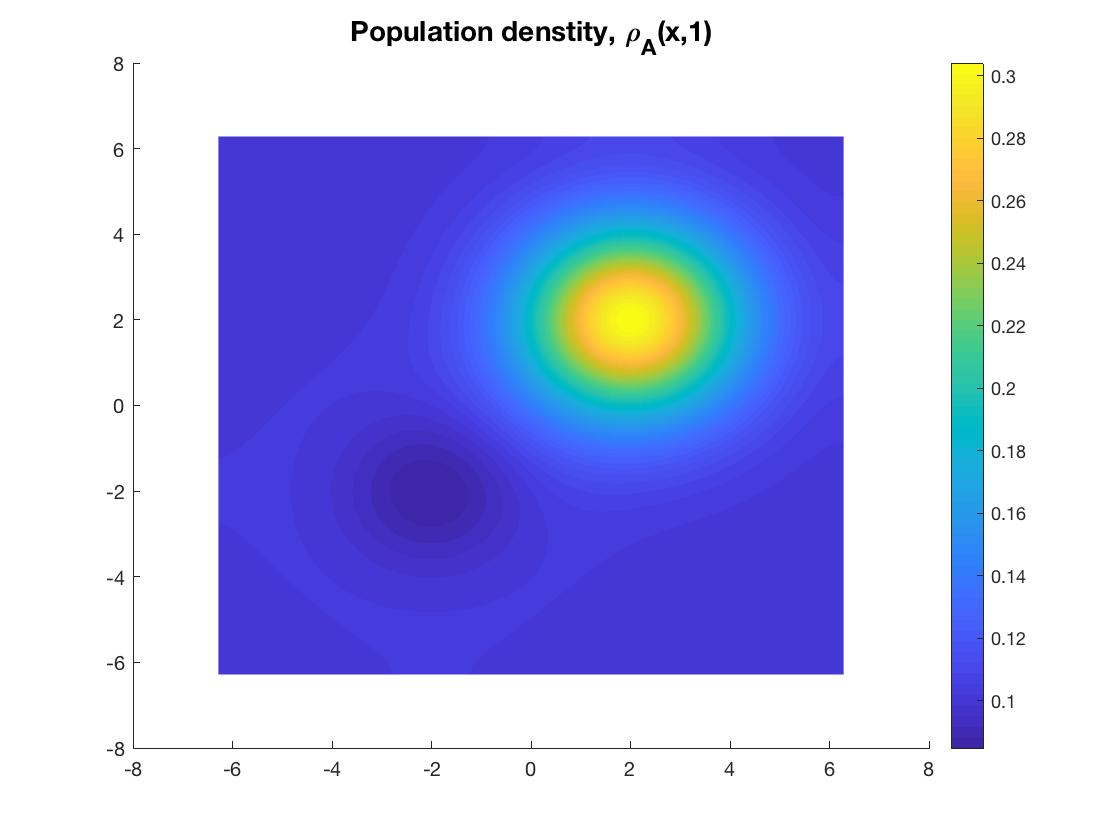}}\;
\subfloat[$\rho_A(30,x)$]{\label{fig:3}\includegraphics[width=0.32\textwidth]{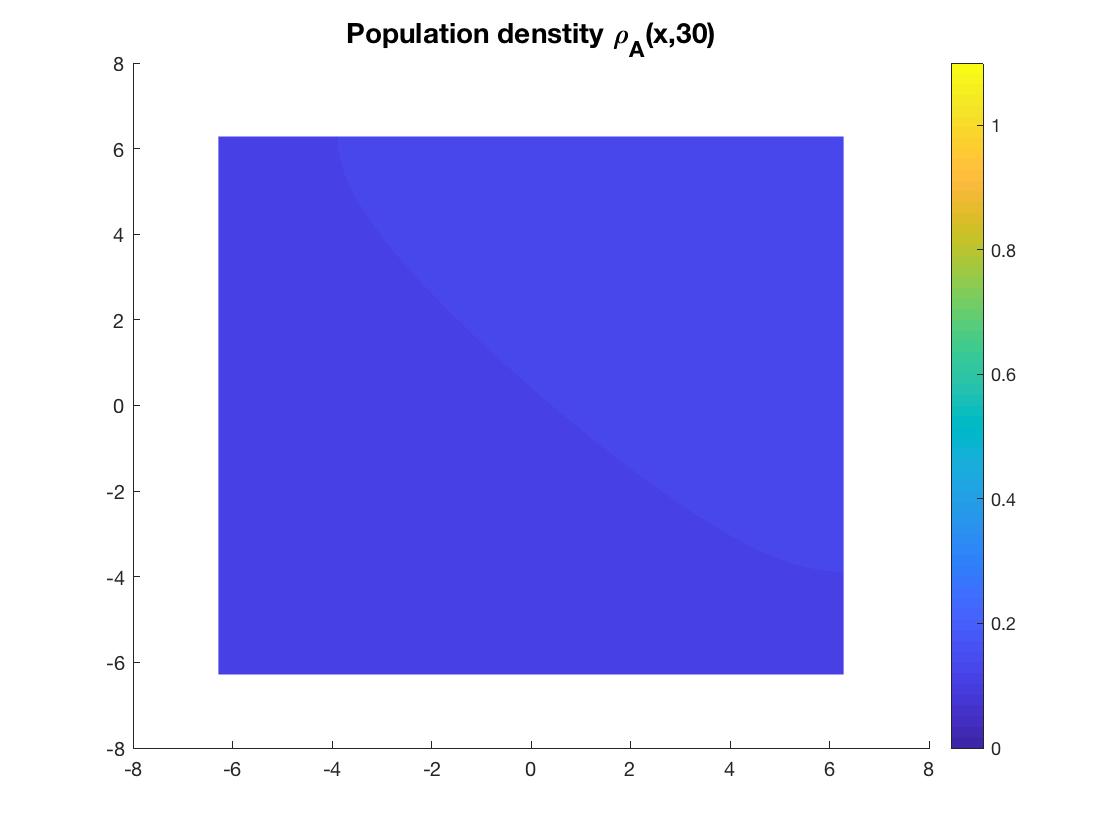}}\\
\subfloat[$\rho_B(0,x)$]{\label{fig:1}\includegraphics[width=0.32\textwidth]{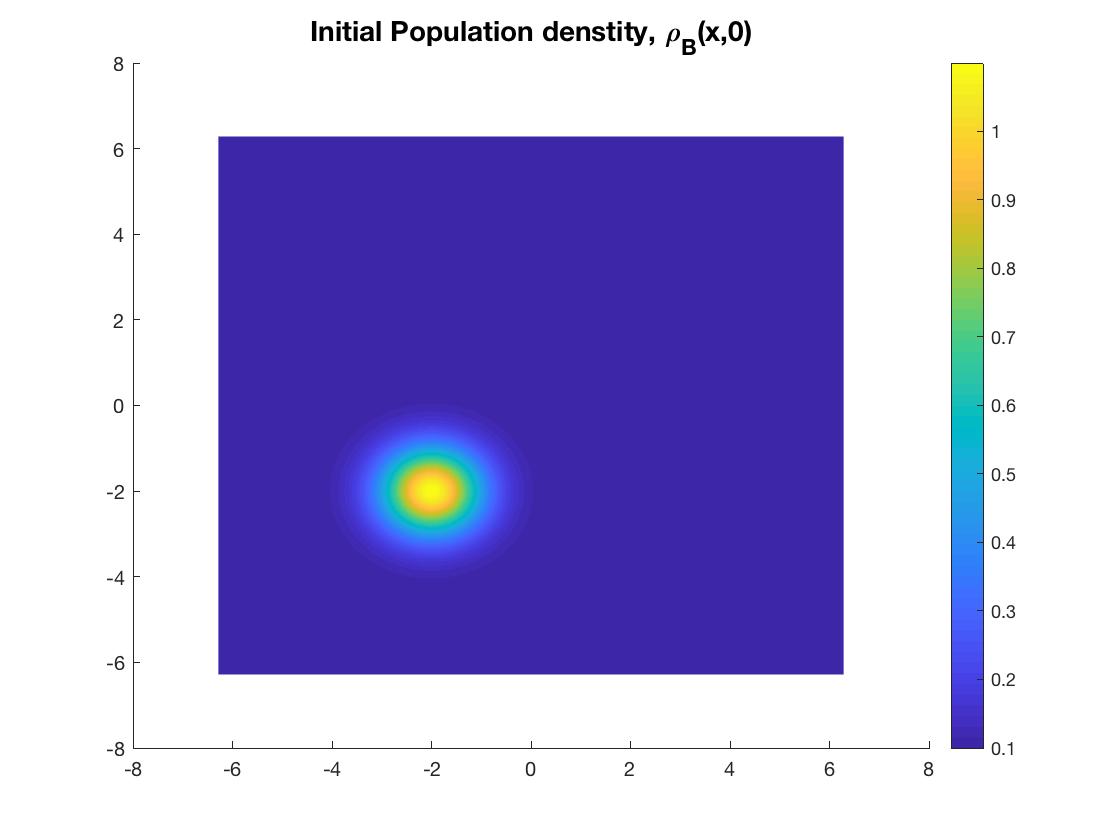}}\;
\subfloat[$\rho_B(1,x)$]{\label{fig:2}\includegraphics[width=0.32\textwidth]{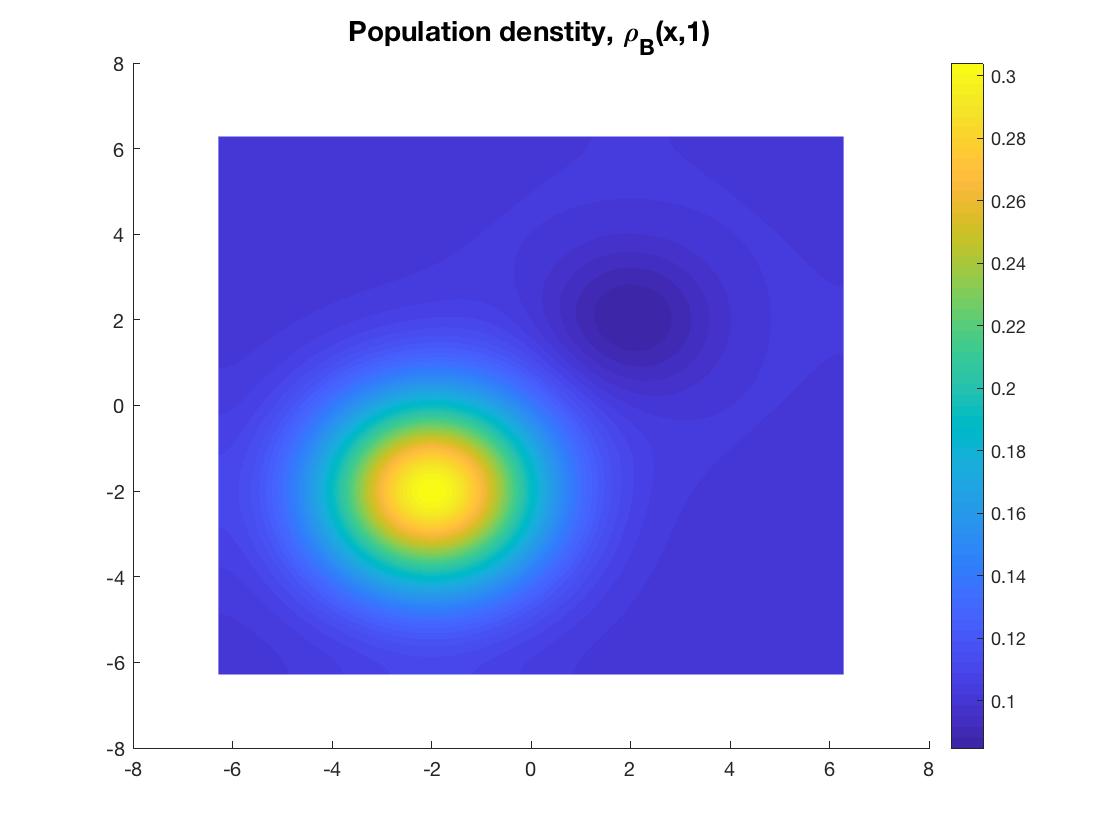}}\;
\subfloat[$\rho_B(30,x)$]{\label{fig:3}\includegraphics[width=0.32\textwidth]{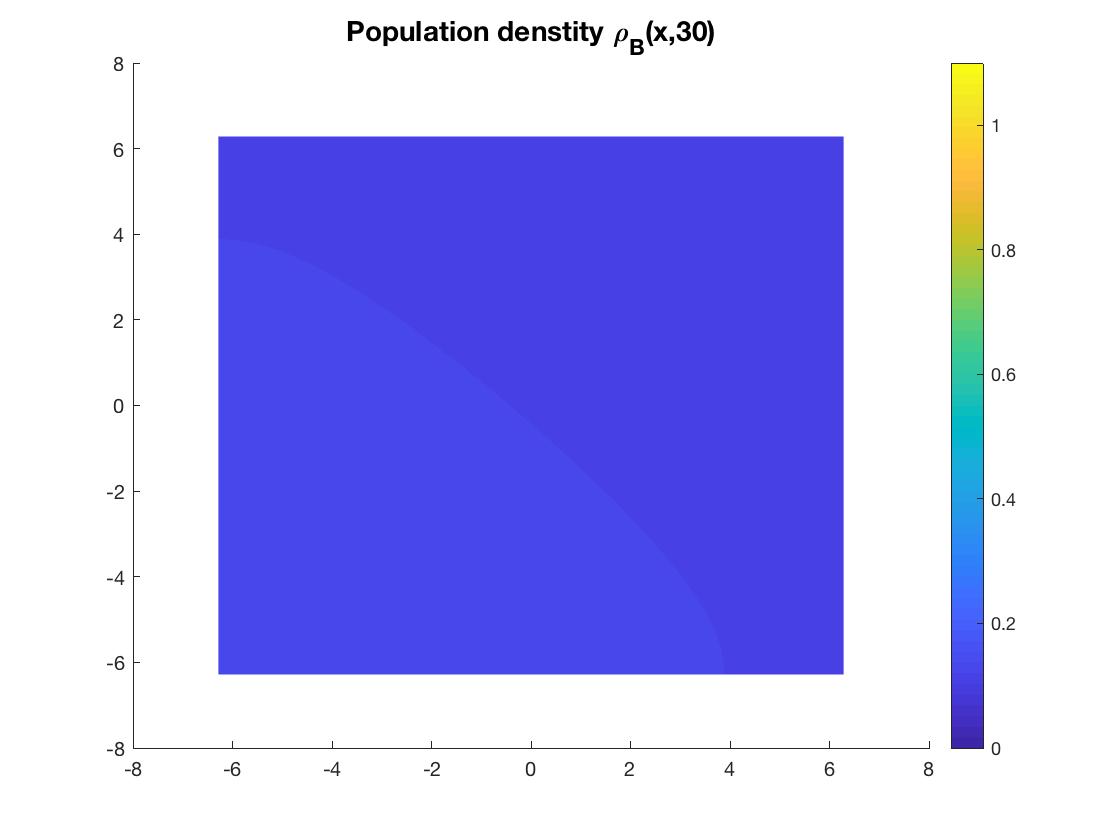}}
   \caption{Numerical solutions in two-dimension with initial densities $\rho_A(0,x)= .1+e^{-\abs{x-2}^2}$ and $\rho_B(0,x)=  .1+e^{-\abs{x+2}^2}$}   \label{fig:oc5}
\end{figure}

\section*{Acknowledgements}

A. Barbaro was supported by the NSF through grant
No. DMS-1319462. N. Rodr\'iguez was partially funded by the NSF DMS-1516778. H. Yolda\c{s} was supported by the European Research Council (ERC) under the European Union’s Horizon 2020 research and innovation programme (grant agreement No 639638). N. Zamponi acknowledges support from the Alexander von Humboldt foundation. The authors gratefully acknowledge the American Institute of Mathematics (AIM), where this project began. 

\bibliography{GangDynamics}
\bibliographystyle{ieeetr}

\end{document}